\renewcommand*{\top}{%
  {\mathpalette\@transpose{}}%
}
\newcommand*{\@transpose}[2]{%
  \raisebox{\depth}{$\m@th#1\scriptscriptstyle\mathsf{T}$}%
}
\newcommand\jatop[2]{\genfrac{}{}{0pt}{}{#1\hfill}{#2\hfill}}
\newcommand\katop[2]{\genfrac{}{}{0pt}{}{#1}{#2}}
\DeclareMathOperator{\rank}{rank}
\DeclareMathOperator*{\argmin}{argmin}
\DeclareMathOperator{\vvec}{\textstyle{vec}}
\DeclareMathOperator{\tr}{\textstyle{tr}}
\DeclareMathOperator{\prox}{\textstyle{prox}}
\let\c@proposition\c@theorem
\let\c@corollary\c@theorem
\let\c@lemma\c@theorem
\let\c@definition\c@theorem
\let\c@example\c@theorem
\let\c@remark\c@theorem
\let\c@observation\c@theorem
\let\c@claim\c@theorem
\journalname{Optimization and Engineering}
\begin{document}

\title{On computing  sparse universal solvers\\ for key problems in statistics}

\titlerunning{Sparse universal  solvers for key problems in statistics}        

\author{Ananias Sousa Machado \break \and
        Marcia Fampa \and Jon Lee  
}

\authorrunning{A.S. Machado, M. Fampa \& J. Lee} 

\institute{A.S. Machado \at
              Universidade Federal do Rio de Janeiro  \\
              \email{ananiasmachado@poli.ufrj.br}           %  \\
           \and
           M. Fampa \at
              Universidade Federal do Rio de Janeiro \\
              \email{fampa@cos.ufrj.br}           %  \\
        \and
           J. Lee \at
           University of Michigan\\
           \email{jonxlee@umich.edu}
}

\date{September 5, 2025. Revised June 16, 2026.}

\maketitle

\begin{abstract}
We give sparsity results and present algorithms for calculating minimum (vector) 1-norm \emph{universal solvers} connected to
least-squares problems. In particular, besides 
universal least-squares solvers, we consider 
\emph{minimum-rank} universal least-squares solvers, and simultaneous universal minimum-norm/least-squares solvers. For all of these, we present and compare several new alternative linear-programming formulations and very effective proximal-point algorithms. Overall, we found that our new Douglas–Rachford splitting algorithms
for these problems performed best. 
\end{abstract}

\keywords{least squares, universal solver, Moore-Penrose properties, generalized inverse, minimum 1-norm, sparsity, linear programming, ADMM, Douglas–Rachford splitting algorithm}

\section{Introduction}

A \emph{universal solver} for a problem defined by pairs $(A\in\mathbb{R}^{m\times n},b\in\mathbb{R}^m)$, where $A$ is considered fixed and $b$ varies,
is a ``generalized inverse''  
$H\in\mathbb{R}^{n\times m}$ of $A$ that produces an optimal solution of the problem, via $\hat\theta:=Hb$, for all $b$ in some prescribed set. For some key problems of
interest, $H:=A^\dagger$, the Moore-Penrose pseudoinverse of $A$, is a universal solver. For example: (i)
the least-squares problem
$\min_\theta \|A\theta -b\|^2_2$\,, for arbitrary $b\in\mathbb{R}^m$, and (ii)
the minimum 2-norm problem 
$\min_\theta \{\|\theta\|^2_2 : A\theta=b \}$, when $b$ is in the column space 
of $A$. 
But typically, $A^\dagger$ is rather dense, even when $A$ is sparse, and so computing the solution $\hat \theta := Hb$ for many vectors $b$ is expensive.  So we are interested in finding \emph{sparse} universal solvers, when
$A$ has neither full-row nor full-column rank (which is the situation that can lead to sparser universal solvers than $A^\dagger$). 
Sparsity assumptions on $A$ are not the focus of our work, however we note that many calculations 
for the various systems that we work with may take advantage of sparsity or structured sparsity on $A$.

Our starting point is the following celebrated 
characterization of the ``Moore-Penrose pseudoinverse''.
\begin{theorem}[see \protect{\cite[Thm. 1]{Penrose}}]
For $A\!\in\!\mathbb{R}^{m \times n}$, the M-P pseudoinverse $A^{\dagger}$ is the unique 
 $H\!\in\!\mathbb{R}^{n \times m}$ satisfying the following conditions.
	\begin{align}
\mbox{(generalized inverse)\quad	 }	& AHA = A \label{P1} \tag{P1}\\
\mbox{(reflexive)\quad }		& HAH = H \label{P2} \tag{P2}\\
\mbox{(ah-symmetric)\quad }		& (AH)^{\top} = AH \label{P3} \tag{P3}\\
\mbox{(ha-symmetric)\quad }		& (HA)^{\top} = HA. \label{P4} \tag{P4}
	\end{align}
\end{theorem}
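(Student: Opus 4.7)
The plan is to split the theorem into existence (some such $H$ exists) and uniqueness (any two such matrices agree). For existence, I would produce an explicit $H$ from the singular value decomposition. Write $A = U\Sigma V^{\top}$ with $U\in\mathbb{R}^{m\times m}$ and $V\in\mathbb{R}^{n\times n}$ orthogonal and $\Sigma\in\mathbb{R}^{m\times n}$ carrying the positive singular values $\sigma_1\geq\cdots\geq\sigma_r>0$ on its leading diagonal. Define $\Sigma^{\dagger}\in\mathbb{R}^{n\times m}$ as the transpose of $\Sigma$ with each $\sigma_i$ replaced by $1/\sigma_i$, and set $H := V\Sigma^{\dagger}U^{\top}$. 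Because $U^{\top}U = I_m$ and $V^{\top}V = I_n$, each of \eqref{P1}--\eqref{P4} collapses to one of the diagonal identities $\Sigma\Sigma^{\dagger}\Sigma=\Sigma$, $\Sigma^{\dagger}\Sigma\Sigma^{\dagger}=\Sigma^{\dagger}$, $(\Sigma\Sigma^{\dagger})^{\top}=\Sigma\Sigma^{\dagger}$, and $(\Sigma^{\dagger}\Sigma)^{\top}=\Sigma^{\dagger}\Sigma$, all of which are immediate from $\Sigma^{\dagger}$ being (block-)diagonal with the reciprocal singular values.

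For uniqueness, suppose $H$ and $K$ both satisfy \eqref{P1}--\eqref{P4}. The core step is to establish $H = HAK$ via the chain
\begin{align*}
 H \;=\; HAH &\;=\; H(AH)^{\top} \;=\; HH^{\top}A^{\top} \;=\; HH^{\top}(AKA)^{\top}\\
 &\;=\; HH^{\top}A^{\top}K^{\top}A^{\top} \;=\; H(AH)^{\top}(AK)^{\top} \;=\; HAH\cdot AK \;=\; HAK,
\end{align*}
applying in order \eqref{P2} for $H$, \eqref{P3} for $H$, \eqref{P1} for $K$, then \eqref{P3} for $H$ together with \eqref{P3} for $K$, and finally \eqref{P2} for $H$. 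A mirror chain starting from $K = KAK$ and using \eqref{P4} in place of \eqref{P3} and \eqref{P1} for $H$ in place of \eqref{P1} for $K$ likewise gives $K = HAK$, so that $H = K$.

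The main obstacle is organizational rather than conceptual: there are roughly eight property-invocations to sequence correctly across the two chains, and it is easy to go astray if one applies \eqref{P3} or \eqref{P4} on the wrong side of a product. An alternative existence proof would proceed instead from a full-rank factorization $A = BC$ with $B$ full-column-rank and $C$ full-row-rank, and take $A^{\dagger} := C^{\top}(CC^{\top})^{-1}(B^{\top}B)^{-1}B^{\top}$, but the SVD construction makes the four Penrose identities most transparent and dovetails nicely with the sparsity-oriented discussion that follows.
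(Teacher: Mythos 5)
Your proposal is correct, but note that the paper offers no proof of this statement at all: it is quoted as a classical result with a citation to Penrose's 1955 paper, so there is nothing internal to compare against. Your argument is the standard one and is sound on both halves. The SVD construction of $H := V\Sigma^{\dagger}U^{\top}$ does reduce each of \ref{P1}--\ref{P4} to an identity about the diagonal matrices $\Sigma\Sigma^{\dagger}$ and $\Sigma^{\dagger}\Sigma$, which are orthogonal projectors onto the leading $r$ coordinates, and this construction is exactly the one the paper later exploits implicitly (Theorem 2 of the paper recovers $A^{\dagger}$ by setting $W$, $Y$, $Z$ to zero). Your uniqueness chain $H = HAH = H(AH)^{\top} = \cdots = HAK$ and its mirror $K = KAK = (KA)^{\top}K = \cdots = HAK$ is the classical Penrose telescoping argument, and the eight property-invocations are sequenced correctly: the first chain uses only \ref{P2} and \ref{P3} for $H$, \ref{P1} and \ref{P3} for $K$, while the mirror uses \ref{P4} in place of \ref{P3} and swaps the roles of \ref{P1}. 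The one thing worth making explicit in a final write-up is the step $H(AH)^{\top}(AK)^{\top} = (HAH)(AK)$, which uses the symmetry of \emph{both} $AH$ and $AK$ simultaneously; you do flag this in your parenthetical, so no gap remains.
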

The terms ``generalized inverse'' and ``reflexive'' are completely standard.
We have been using the terms ``ah-symmetric'' and ``ha-symmetric'' in our previous works as convenient mnemonics for the symmetry of $AH$ and $HA$, respectively. Mixing, we also say ``ah-ha-symmetric''
when both $AH$ and $HA$ are symmetric. 

Some key universal solvers $H$ are characterized by proper subsets of the M-P properties, which leaves room for searching for sparse universal solvers
using optimization methods. This approach was initiated in \cite{FFL2016}
and further developed in many works; see the recent work \cite{ponte2024goodfastrowsparseahsymmetric} and the references therein.
Generally, we employ the common technique of seeking to 
induce high sparsity (low ``0-norm'') by minimizing 
the vector 1-norm. Additionally, minimizing the vector 1-norm --- a true norm --- has the side benefit of 
keeping the magnitudes of the entries under control, which leads to solutions with better numerical properties. 

 For rank $r$ matrix $A\in\mathbb{R}^{m\times n}$,
for the \emph{(full) singular-value decomposition}  (SVD), we  write $A=U\Sigma V^\top$, where  $U_{m\times m}$ and $V_{n\times n}$ are real orthogonal matrices, 
 $\Sigma_{m\times n}$ is a diagonal matrix, with non-negative diagonal entries
  $\sigma_1\geq\sigma_2\geq \cdots \geq \sigma_{\min\{m,n\}}$\,, known as the  singular values of $A$. 
The following very useful theorem characterizes each of the M-P properties via the
the SVD of $A$.

\begin{theorem}[\cite{BenIsrael1974}, p. 208, Ex. 14 (no~proof); also see \cite{PFLX_ORL} (with proof)]\label{thm:structural}
For rank $r$ matrix $A\in\mathbb{R}^{m\times n}$, consider the full SVD $A =: U \Sigma V^\top$ with 
\[
\Sigma =: \begin{bmatrix}\underset{\scriptscriptstyle r\times r}{D} & \underset{\scriptscriptstyle r\times (n-r)}{0}\\
\underset{\scriptscriptstyle (m-r)\times r}{0} & \underset{\scriptscriptstyle (m-r)\times (n-r)}{0}\end{bmatrix},
\]
\noindent where $V:= \begin{bmatrix}\underset{\scriptscriptstyle n\times r}{V_1} & \underset{\scriptscriptstyle n\times (n-r)}{V_2}\end{bmatrix}, ~ U:= \begin{bmatrix}\underset{\scriptscriptstyle m\times r}{U_1} & \underset{\scriptscriptstyle m\times (m-r)}{U_2}\end{bmatrix}$, and $D$ is diagonal with positive diagonal components. 
For $H \!\in\! \mathbb{R}^{n \times m}$, let $\Gamma\!:=\!V^\top H U$ (so $H\!=\! V\Gamma U^\top$), where we block partition 
$\Gamma$~as 
\[
\Gamma=:\begin{bmatrix}\underset{\scriptscriptstyle r\times r}{X} & \underset{\scriptscriptstyle r\times (m-r)}{Y}\\
\underset{\scriptscriptstyle (n-r)\times r}{Z} & \underset{\scriptscriptstyle (n-r)\times (m-r)}{W}\end{bmatrix}.
\]
We have 
\begin{enumerate}
\item[($i$)]
{\rm\ref{P1}} is equivalent to  
 $X = D^{-1}$.
\item[($ii$)]
  If {\rm\ref{P1}} is satisfied, then {\rm\ref{P2}} is equivalent to 
  $ZDY = W$.
\item[($iii$)] 
If {\rm\ref{P1}} is satisfied, then {\rm\ref{P3}} is equivalent to 
    $Y = 0$.
\item[($iv$)]
    If {\rm\ref{P1}} is satisfied, then {\rm\ref{P4}} is equivalent to  
    $Z = 0$.
\end{enumerate}
\end{theorem}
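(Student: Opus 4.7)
The plan is a direct computation in ``SVD coordinates''. Substituting $A = U\Sigma V^\top$ and $H = V\Gamma U^\top$ into each of \ref{P1}--\ref{P4} and using the orthogonality relations $U^\top U = I_m$ and $V^\top V = I_n$, every M-P identity collapses to an equation of the form $UM_1V^\top = UM_2V^\top$ (or with $U,V$ on the appropriate sides) between two explicit $2\times 2$ block matrices $M_1, M_2$. Because $U$ and $V$ are orthogonal, such an equation is equivalent to $M_1 = M_2$ at the level of the inner blocks. The proof thus reduces to computing those inner matrices and reading off the conditions on $X, Y, Z, W$ forced by the zeros in $\Sigma$.

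The one preliminary calculation needed is the block forms
\[
AH = U \begin{bmatrix} DX & DY \\ 0 & 0 \end{bmatrix} U^\top, \qquad HA = V \begin{bmatrix} XD & 0 \\ ZD & 0 \end{bmatrix} V^\top,
\]
\[
AHA = U \begin{bmatrix} DXD & 0 \\ 0 & 0 \end{bmatrix} V^\top, \qquad HAH = V \begin{bmatrix} XDX & XDY \\ ZDX & ZDY \end{bmatrix} U^\top,
\]
obtained by straightforward multiplication using the given block structure of $\Sigma$.

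Part (i) is then immediate: comparing $AHA$ with $A = U\left[\begin{smallmatrix} D & 0 \\ 0 & 0 \end{smallmatrix}\right]V^\top$ gives $DXD = D$, and invertibility of $D$ yields $X = D^{-1}$. Assuming now \ref{P1}, so that $DX = XD = I_r$, parts (ii)--(iv) all drop out. For (iii), $AH$ reduces to $U\left[\begin{smallmatrix} I_r & DY \\ 0 & 0 \end{smallmatrix}\right] U^\top$, whose symmetry is equivalent to $DY = 0$ and hence to $Y = 0$; part (iv) is completely parallel, applied to $HA$, giving $Z = 0$. For (ii), $HAH$ simplifies to $V\left[\begin{smallmatrix} X & Y \\ Z & ZDY \end{smallmatrix}\right] U^\top$, and comparison with $H = V\Gamma U^\top$ leaves exactly the single condition $W = ZDY$.

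There is no real conceptual obstacle; the only care required is bookkeeping in the $2\times 2$ block multiplications, where many blocks vanish due to the zeros in $\Sigma$ and several others collapse to identities once $X = D^{-1}$ has been imposed. Structurally, the theorem is expressing that in the SVD coordinates given by $\Gamma$, the four M-P properties decouple cleanly into independent block conditions on $X$, $Y$, $Z$, and $W$, which is precisely the feature that will make it useful for the optimization formulations developed later in the paper.
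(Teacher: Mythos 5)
Your proof is correct and complete: the block computations of $AH$, $HA$, $AHA$, and $HAH$ in SVD coordinates are all accurate, and the four conditions on $X,Y,Z,W$ follow exactly as you describe (using invertibility of $D$ for (i), (iii), (iv)). Note that the paper itself does not prove this theorem --- it is cited from Ben-Israel and Greville (stated there as an exercise) and from the authors' earlier work, where the proof given is essentially this same direct substitution argument; so your write-up matches the intended proof and fills in the omitted details correctly.
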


\noindent In fact, by setting all of $W$, $Y$ and $Z$ to 0 in Theorem \ref{thm:structural}, we
can see the simple relationship between $A^\dagger$ and the
SVD of $A$.

\medskip

\noindent \textbf{Literature Overview.}
\cite{dokmanic,dokmanic1,dokmanic2} suggested 1-norm minimization for inducing sparsity in left- and right-inverses. 
\cite{FFL2016} introduced the idea of seeking various sparse generalized inverses
based on subsets of the Moore-Penrose properties,
using 1-norm minimization and general-purpose linear-programming software.
\cite{FampaLee2018ORL} gave a combinatorial polynomial-time approximation algorithm for
 1-norm minimization over reflexive generalized inverses.
 \cite{XFLPsiam} extended these ideas to (i) symmetric generalized inverses of symmetric matrices, and (ii) ah-symmetric generalized inverses of general matrices. They also gave hardness results for
 sparsity maximization for various generalized inverses, and they gave sparsity bounds for various 1-norm minimizing generalized inverses calculated by a linear-programming algorithm; also see \cite{XFLrank12} and \cite{FLPXjogo}.
 \cite{PFLX_ORL} used and developed structural results for various generalized inverses,
 aimed at compact optimization formulations for 1-norm minimization (seeking various types of sparse generalized inverses) and 2,1-norm minimization (seeking various types of row-sparse generalized inverses). \cite{ponte2024goodfastrowsparseahsymmetric} introduced successful ADMM (Alternating Directions Method of Multipliers) algorithms for 
 1-norm and 2,1-norm minimization for ah-symmetric reflexive generalized inverses, and
 they demonstrated that the earlier approximation algorithm for
 1-norm minimization over reflexive generalized inverses also gives an approximation guarantee
 for the 2,1-norm. A few other references on our topic, less relevant to our development, are: \cite{FFL2019,ojmoFLP21,ahsymginv}. 
 Finally, we would like to mention that there is a vast literature on
 finding a sparse solution to an underdetermined system $Ax=b$, when both $A$ and $b$ are considered as fixed input (see \cite{Chen1,Chen2,Chen3}, for example) --- different from our situation where
 we seek a universal solver, which only depends on $A$.

\medskip

\noindent \textbf{Organization and Contributions.} 
In \S\ref{sec:LS}, we concentrate on the problem of 
calculating a minimum 1-norm universal least-squares solver $H$,
together with investigating the sparsity of the resulting $H$. 
We give four different formulations that can be easily reformulated as linear programs (LPs): 
\ref{p131} (based on two of the four M-P properties),
\ref{p1proj13} (based on projection onto the range of $A$),
\ref{ppls1} (based on the least-squares optimality conditions),
and
\ref{calP13} (based on the structure of generalized inverses,
in relation to the M-P properties: Theorem \ref{thm:structural}).
Previously, it was observed that \ref{calP13} is
far superior to \ref{p131}\,, when using a state-of-the-art
commercial linear-programming solver; see \cite{PFLX_ORL}. 
In this context, now we have found that \ref{p1proj13} (which was previously used to prove a sparsity upper bound; see \cite{XFLPsiam}) and \ref{ppls1} are superior to the other two. 
Further, based on \ref{ppls1}\,, we derive a DRS (Douglas-Rachford splitting) algorithm 
which turns out to be far superior to solving any of them using a commercial linear-programming solver. 

In \S\ref{sec:ah-ref}, we look to the problem
of calculating a minimum 1-norm \emph{minimum-rank} universal least-squares solver $H$, together
with investigating the sparsity of the resulting $H$.
We give six different formulations, all but the first of which can be easily reformulated as LPs: 
\ref{p1231a} (based on three of the four M-P properties --- but nonconvex),
\ref{p1231lin} (reformulating the \ref{P2} constraints of \ref{p1231a}, linearly),
\ref{p1proj13p2lin} (based on a projector),
\ref{p1plsp2lin} (based on least-squares optimality conditions),
\ref{pplsr1} (based on a new theorem characterizing feasible solutions of \ref{p1231a}), 
and
\ref{calP123} (based on Theorem \ref{thm:structural}).
It was previously observed that \ref{calP123} is
far superior to \ref{p1231lin}\,, when using a
commercial linear-programming solver; see \cite{PFLX_ORL}. 
We demonstrate that \ref{p1231lin}\,,
\ref{p1proj13p2lin}\,, and 
\ref{p1plsp2lin} are also superior
to \ref{p1231lin}\,, but \ref{calP123}
is better than all of the others.
We note that \ref{p1proj13p2lin} (which performs rather poorly, computationally)
was previously used to prove a sparsity upper bound; see \cite{XFLPsiam}).
All of the resulting LPs are greatly outperformed by an ADMM 
based on 
 \ref{calP123}\,, 
of \cite{ponte2024goodfastrowsparseahsymmetric}. Based on \ref{p1plsp2lin}, we derive a 
new DRS algorithm 
which is far superior to 
ADMM. 

In \S\ref{sec:ah-ha}, we  look at calculating a minimum 1-norm
simultaneous universal mini\-mum-norm/least-squares solver 
$H$, together
with investigating the sparsity of the resulting $H$. 
We give six different formulations, all of which can be easily reformulated as LPs: 
\ref{p1p134} (based on three of the four M-P properties),
\ref{p1pmn3} (based on minimum 2-norm optimality conditions),
\ref{p1plspmn} (based on two sets of optimality conditions),
\ref{double} (based on two projectors),
\ref{p1pmx} (based on a new theorem characterizing feasible solutions of \ref{p1p134}), 
and
\ref{calP134} (based on Theorem \ref{thm:structural}).
The only previous computing was with \ref{p1p134}\,; see \cite{FFL2016}.
Our best results using linear-programming software are with \ref{double}\,. We give (i) a new sparsity upper bound,
based on \ref{p1pmx}\,, (ii) a new ADMM 
based on \ref{calP134}\,, which is vastly superior to all of the LPs, 
and (iii) a new and even-better-performing DRS algorithm, which is based on \ref{p1plspmn}\,.

We have noted that \cite{ponte2024goodfastrowsparseahsymmetric} developed an ADMM
for 
\ref{p1231a} based on \ref{calP123}\,, and using a variable  $Z\in\mathbb{R}^{(n-r) \times r}$\,; see \S\ref{sec:ah-ref}.
Similarly, in  \S\ref{sec:ah-ha}, we develop a new ADMM 
for 
\ref{p1p134} based on \ref{calP134}\,, and using a variable
$W \in \mathbb{R}^{(n-r) \times (m-r)}$\,. We do \emph{not} develop 
an ADMM 
for 
\ref{p131}\, based on \ref{calP13}\,,
because it would require the use of \emph{both} $W$ and $Z$, and we can expect that
it would be quite inefficient (compared to our DRS approach). 

Overall, in \S\S\ref{sec:LS}--\ref{sec:ah-ha},
we find that in our context of seeking minimum 1-norm (and 0-norm inducing)
universal solvers:
\begin{itemize}
    \item There are many viable linear-programming formulations, and with commercial solvers, the performance can vary widely; for small-scale instances, it is
    good to be aware of the different possible formulations. 
    \item The best-performing linear-programming formulations may not be the best ones to analyze for establishing sparsity upper bounds.
    \item Effective ADMM and DRS algorithms, much better performing 
    than applying commercial solvers to  linear-programming formulations,
    may be built from formulations that
    are often not the best for the other purposes (mentioned above).
    \item 1-norm minimization can be very effective for inducing sparsity in generalized inverses if low rank is not required. Enforcing low rank, or equivalently enforcing that \ref{P2} be satisfied, can significantly increase the density of minimal 1-norm ah-symmetric generalized inverses.
\end{itemize}

In \S\ref{sec:outlook}, we give some promising directions for further research.

In Appendix \ref{appA}, we compare \texttt{Gurobi}'s performance on the different linear-programming reformulations presented for the problems addressed in \S\S \ref{sec:LS}, \ref{sec:ah-ref}, and \ref{sec:ah-ha}; and we verify the significant difference on the runtimes for the different formulations. 

\medskip

\noindent \textbf{Notation.} 
For $p\in\{0\}\cup [1,\infty] $,
we denote the ordinary vector $p$-norm by $\|\cdot\|_p$ (of course it is only a pseudo-norm for $p=0$). For a matrix $A\in\mathbb{R}^{m\times n}$, 
we define its (column-wise) vectorization by $\vvec{(A)}\in\mathbb{R}^{mn}$ and 
 its vector $p$-norm by $\|A\|_p:=
\|\vvec(A)\|_p$\,. 
For matrices,
$\tr(\cdot)$ denotes trace, $\det(\cdot)$ denotes determinant, $\rank(\cdot)$ denotes rank,  $\langle \cdot, \cdot \rangle$ 
denotes Frobenius inner product, $\|\cdot\|_F$ denotes Frobenius norm, $\mathcal{R}(\cdot)$ denotes the range (i.e., column space),
and $\mathcal{K}(\cdot)$ denotes the kernel (i.e., null space). 
We denote Hadamard product by $\circ$ and Kronecker product by $\otimes$~.
We have the useful identity: $\vvec(ABC)=(C^\top \otimes A)\vvec(B)$.
We use $\mathbf{e}_i$ to denote the $i$-th standard unit vector, 
$I_m$ denotes an order-$m$ identity matrix, and $J$ denotes an all-ones matrix
(with dimensions inferred from context).
 We denote by $\argmin\{\cdot\}$
\emph{any} solution of the associated minimization problem. 
We say that $H$ satisfies properties $P$+$Q$ (e.g., any of the M-P properties)
if $H$ satisfies properties $P$ and $Q$.

On many occasions, we refer to the ``standard linear-programming reformulation'' of
a problem of the form 
$\min_{H \in \mathbb{R}^{n \times m}}\{\|H\|_1 ~:~ \mathcal{F}(H)=F\}$, where $\mathcal{F}$ is a linear operator and $F$ is constant (matching the output shape of $\mathcal{F}$).
By this we mean 
\[
\textstyle\min_{H \in \mathbb{R}^{n \times m}}\{\langle J,H^+\rangle +  \langle J,H^-\rangle ~:~ \mathcal{F}(H^+)-\mathcal{F}(H^-)=F,\, H^+\geq 0,\, H^-\geq 0 \},
\]
where a solution $(\bar{H}^+,\bar{H}^-)$
yields a 1-norm minimizing $\bar{H}:=\bar{H}^+-\bar{H}^-$
(i.e., a solution of $\min_{H \in \mathbb{R}^{n \times m}}\{\|H\|_1 ~:~ \mathcal{F}(H)=F\}$).
Notice how this linear-programming formulation is 
in ``standard form'' (equations in nonnegative variables).
Similarly, for a problem of the form
$\min_{W,Z}\{\|\mathcal{F}(W,Z)+F\|_1\}$, where $\mathcal{F}$ is a linear operator, $F$ is constant in $\mathbb{R}^{n\times m}$,  
and $\mathcal{F}(W,Z)+F$ models $H$,
the ``standard linear-programming reformulation'' is
\[
\textstyle\min_{H^+ \in \mathbb{R}^{n \times m},W,Z}\{\langle J,H^+\rangle ~:~ 
H^+ -\mathcal{F}(W,Z) \geq F,\, 
H^+ +\mathcal{F}(W,Z) \geq -F
\},
\]
where a solution $(\bar{H}^+,\bar{W},\bar{Z})$
yields a 1-norm minimizing
$\bar{H}:= \mathcal{F}(\bar{W},\bar{Z})+F$
(i.e, a solution of 
$\min_{W,Z} \|\mathcal{F}(W,Z)+F\|_1$).
Notice how the dual of this linear-programming formulation is in standard form, which can be convenient for calculating its solution. 

\medskip

\noindent \textbf{Experimental setup.} Our 
computational experiments are not all in one section, so we describe
our set up here. 

We used the programming language \texttt{Julia} v1.11.6 (using VSCodium v1.97.1), and we ran the experiments on \emph{zebratoo,}
a 32-core machine (running Windows Server 2022 Standard):
two Intel Xeon Gold 6444Y processors running at 3.60GHz, with 16 cores each, and 128 GB of memory. Unless otherwise noted, we solved the optimization problems involved in our experiments with    \texttt{Gurobi} v12.0.1, using parameters:
 Barrier convergence tolerance,
  Feasibility tolerance, and
  Optimality tolerance: $10^{-5}$. The time limit to solve each instance  was 7200 seconds.
To compute the 0-norm, we consider the tolerance for non-zero element as $10^{-5}$.
When presenting the results, we use the symbol `$*$' to indicate that it was not possible to solve the instance within our time~limit.

\section{Sparse universal least-squares solvers
}\label{sec:LS}

A \emph{universal least-squares solver} for a matrix $A\in\mathbb{R}^{m \times n}$ is a
matrix $H\in\mathbb{R}^{n \times m}$ such that 
$\hat{\theta}:=Hb$ is a solution to the least-squares (estimation) problem 
\begin{align}
\textstyle \min_{\theta\in \mathbb{R}^m} \|A\theta -b\|^2_2 \label{lse}\tag{LSE}
\end{align}
for every $b\in\mathbb{R}^m$.

\begin{theorem}[see e.g. 
\protect{\cite[Thm. 1 of Chap. 3]{BenIsrael1974}}]
\label{prop:p1p3}
$H$ satisfies {\rm\ref{P1}+\ref{P3}} (i.e., $H$ is an ah-symmetric generalized inverse of $A$) if and only if $H$ is a universal least-squares solver for $A$.
\end{theorem}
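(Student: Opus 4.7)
The plan is to reduce the equivalence to the fact that $AH$ is the orthogonal projector onto $\mathcal{R}(A)$, and then identify that projector condition with properties \ref{P1}+\ref{P3}.

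First I would recall the standard characterization of solutions of \ref{lse}: a vector $\theta$ solves $\min_\theta\|A\theta-b\|_2^2$ if and only if $A\theta = P b$, where $P\in\mathbb{R}^{m\times m}$ is the orthogonal projector onto $\mathcal{R}(A)$. The set of optimal $\theta$ is generally not unique (it is an affine translate of $\mathcal{K}(A)$), but the residual $A\theta-b$ is unique and equal to $(I_m-P)b$. Consequently, $H$ is a universal least-squares solver for $A$ exactly when $AHb = Pb$ for every $b\in\mathbb{R}^m$, i.e., when $AH = P$.

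Next I would characterize $P$. The orthogonal projector onto $\mathcal{R}(A)$ is the unique matrix $M\in\mathbb{R}^{m\times m}$ satisfying (a) $M=M^\top$, (b) $M^2=M$, and (c) $\mathcal{R}(M)=\mathcal{R}(A)$; equivalently, $M$ acts as the identity on $\mathcal{R}(A)$ and $\mathcal{R}(M)\subseteq\mathcal{R}(A)$.

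For the forward direction, assume $H$ satisfies \ref{P1}+\ref{P3}. Property \ref{P3} gives $(AH)^\top = AH$. Property \ref{P1}, namely $AHA=A$, shows that $AH$ acts as the identity on every column of $A$, hence on all of $\mathcal{R}(A)$; and since $\mathcal{R}(AH)\subseteq \mathcal{R}(A)$ automatically, we obtain $\mathcal{R}(AH)=\mathcal{R}(A)$. Right-multiplying \ref{P1} by $H$ gives $(AH)^2=AH$, so $AH$ is idempotent. Thus $AH$ satisfies (a)--(c) and equals $P$, so $H$ is a universal least-squares solver.

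For the converse, assume $AH=P$. Then $(AH)^\top = P^\top = P = AH$, which is \ref{P3}. Moreover, since $P$ acts as the identity on $\mathcal{R}(A)$, every column of $A$ satisfies $PA_{\ast j}=A_{\ast j}$, i.e., $AHA = PA = A$, giving \ref{P1}.

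The only step requiring care is the characterization of $\mathcal{R}(AH)$ in the forward direction — confirming both inclusions to pin down $AH$ as exactly the projector onto $\mathcal{R}(A)$ rather than onto some smaller subspace; this is immediate from \ref{P1}, but it is the one point where a careless argument could get only idempotence and symmetry without fixing the range.
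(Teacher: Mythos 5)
Your proof is correct. Note that the paper does not prove this statement itself---it is quoted from Ben-Israel and Greville with a citation and no argument---so there is no in-paper proof to compare against; your route (least-squares solutions are exactly those $\theta$ with $A\theta=Pb$ for $P$ the orthogonal projector onto $\mathcal{R}(A)$, hence universality is exactly $AH=P$, and then matching ``symmetric idempotent with range $\mathcal{R}(A)$'' against \ref{P1}+\ref{P3}) is the standard one and dovetails exactly with the paper's Theorem~\ref{thm:proj_p1p3_Rhode}, which records the intermediate fact $AH=AA^\dagger$ that your argument establishes. You correctly flag the one delicate point (pinning down $\mathcal{R}(AH)=\mathcal{R}(A)$ rather than a proper subspace, which \ref{P1} supplies); the only blemish is the harmless sign slip in the parenthetical remark that the residual $A\theta-b$ equals $(I_m-P)b$ rather than $-(I_m-P)b$, which plays no role in the proof.
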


\begin{theorem} [see e.g. 
\protect{\cite[Thm. 3 of Chap. 2]{BenIsrael1974}}]\label{thm:proj_p1p3_Rhode}    
    $H$ satisfies \rm{\ref{P1}+\ref{P3}} if and only if $AH = AA^{\dagger}$, the orthogonal projector onto the range of $A$.
\end{theorem}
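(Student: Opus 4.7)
The plan is to exploit the elementary fact that an orthogonal projector onto a linear subspace is uniquely determined: any $P$ with $P^\top = P$, $P^2 = P$, and $\mathcal{R}(P)=\mathcal{R}(A)$ must equal $AA^\dagger$. So for the forward direction I will verify that $AH$ has these three properties whenever $H$ satisfies \ref{P1}+\ref{P3}; for the converse I will observe that $AH = AA^\dagger$ immediately forces both \ref{P1} and \ref{P3} because $A^\dagger$ itself satisfies all four M-P properties.

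For the forward direction, symmetry of $AH$ is literally \ref{P3}. Idempotence is the one-line calculation $(AH)(AH) = (AHA)H = AH$, using \ref{P1}. For the range, the inclusion $\mathcal{R}(AH) \subseteq \mathcal{R}(A)$ is immediate from the factor $A$ on the left, and for the reverse inclusion, any $y = Ax \in \mathcal{R}(A)$ satisfies $(AH)y = AHAx = Ax = y$ by \ref{P1}, so $y \in \mathcal{R}(AH)$. Thus $AH$ is an orthogonal projector with range $\mathcal{R}(A)$, and uniqueness identifies it with $AA^\dagger$. For the converse, if $AH = AA^\dagger$ then $(AH)^\top = (AA^\dagger)^\top = AA^\dagger = AH$ (using \ref{P3} for $A^\dagger$), giving \ref{P3} for $H$; and $AHA = AA^\dagger A = A$ (using \ref{P1} for $A^\dagger$), giving \ref{P1} for $H$.

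I do not anticipate any real obstacle: the equivalence rests almost entirely on the defining properties of the pseudoinverse together with the uniqueness of the orthogonal projector onto $\mathcal{R}(A)$. If I wanted to avoid invoking that uniqueness lemma, I could instead argue directly by right-multiplying $AH = (AH)^\top$ by $AA^\dagger$, using $AA^\dagger A = A$ to collapse the right-hand side, and symmetrically right-multiplying the analogous identity for $A^\dagger$ by $AH$; a short comparison then equates $AH$ with $AA^\dagger$. But the projector characterization keeps the argument cleanest and is the route I would take.
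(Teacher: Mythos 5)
Your proof is correct. Note that the paper does not actually prove this statement---it is quoted as a classical result with a citation to Ben-Israel and Greville---so there is no in-paper argument to compare against; your self-contained derivation (verifying that $AH$ is a symmetric idempotent with range $\mathcal{R}(A)$ and invoking uniqueness of the orthogonal projector onto a subspace for the forward direction, and reading off \ref{P1} and \ref{P3} from the Moore--Penrose identities $AA^{\dagger}A=A$ and $(AA^{\dagger})^{\top}=AA^{\dagger}$ for the converse) is sound, and the alternative direct computation you sketch ($AA^{\dagger}=AA^{\dagger}AH=AH$ after transposing $AHAA^{\dagger}=AA^{\dagger}$) also goes through.
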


The well-known \emph{normal equations}, easily derived, are the least-squares optimality conditions:
$A^\top A\theta = A^\top b$. We are looking for an $H$ so that the least-squares solution is $\hat{\theta}:=Hb$. 
Plugging this into the normal equations
gives $A^\top A Hb = A^\top b$. Now, plugging in all of the standard-unit vectors for $b$, we see that we simply get
\begin{align}
& A^\top A H = A^\top. \tag{PLS}\label{PLS} 
\end{align}
Combining this with Theorem~\ref{prop:p1p3}, we have the following well-known result.
\begin{theorem}\label{lem:P1P3eqPLS}
    $H$ satisfies \rm{\ref{P1}}+\rm{\ref{P3}} if and only if $H$ satisfies \ref{PLS}. 
\end{theorem}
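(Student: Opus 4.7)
The plan is to prove the theorem as a short chain of equivalences, leveraging Theorem~\ref{prop:p1p3} (which identifies \ref{P1}+\ref{P3} with being a universal least-squares solver) together with the standard fact that the normal equations $A^\top A\theta = A^\top b$ characterize the optimal solutions of \ref{lse}. The derivation sketched in the paragraph immediately preceding the theorem already contains the key idea; the task is mainly to package it carefully in both directions.

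For the forward direction, suppose $H$ satisfies \ref{P1}+\ref{P3}. By Theorem~\ref{prop:p1p3}, for every $b\in\mathbb{R}^m$ the vector $Hb$ solves \ref{lse} and hence satisfies the normal equations, i.e.\ $A^\top A H b = A^\top b$. Specializing to $b = \mathbf{e}_1,\dots,\mathbf{e}_m$ (equivalently, post-multiplying the identity by $I_m$) yields the matrix equation $A^\top A H = A^\top$, which is \ref{PLS}. Conversely, assume \ref{PLS} holds. Then for any $b$, the vector $\hat{\theta}:=Hb$ satisfies $A^\top A\hat{\theta} = A^\top A H b = A^\top b$, so $\hat{\theta}$ solves the normal equations and hence minimizes $\|A\theta-b\|_2^2$. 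Thus $H$ is a universal least-squares solver for $A$, and a second application of Theorem~\ref{prop:p1p3} gives \ref{P1}+\ref{P3}.

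As an alternative, one can give a purely algebraic proof that avoids the least-squares interpretation and is also worth noting, in case a self-contained derivation is preferred. For $(\Rightarrow)$, using \ref{P3} to replace $AH$ with $(AH)^\top$ and then \ref{P1},
\[
A^\top A H = A^\top (AH) = A^\top (AH)^\top = (AHA)^\top = A^\top.
\]
For $(\Leftarrow)$, transpose \ref{PLS} to get $H^\top A^\top A = A$. Substituting into $AH$ yields $AH = H^\top A^\top A H = (AH)^\top (AH)$, which is manifestly symmetric, establishing \ref{P3}; then $AHA = (AH)^\top A = H^\top A^\top A = A$, establishing \ref{P1}.

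There is no real obstacle here: the theorem is essentially a restatement of the preceding Theorem~\ref{prop:p1p3} combined with the normal equations. The only small point requiring attention is the passage from the quantified statement "$A^\top A H b = A^\top b$ for all $b\in\mathbb{R}^m$" to the matrix equation $A^\top A H = A^\top$, which is immediate on taking $b$ to range over the standard basis of $\mathbb{R}^m$.
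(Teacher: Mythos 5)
Your main argument is exactly the paper's: the paper derives \ref{PLS} by plugging $\hat\theta = Hb$ into the normal equations and ranging $b$ over the standard unit vectors, then invokes Theorem~\ref{prop:p1p3} to identify \ref{P1}+\ref{P3} with being a universal least-squares solver, which is precisely your chain of equivalences. Your supplementary purely algebraic verification (deriving $A^\top AH = A^\top$ from \ref{P1}+\ref{P3} directly, and recovering \ref{P3} from $AH=(AH)^\top(AH)$ and then \ref{P1}) is also correct and is a nice self-contained alternative, but the proof as the paper intends it is your first paragraph.
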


The problem of finding a 1-norm minimizing universal least-squares solver
can be formulated as any of the following optimization problems.
\begin{align}
        & \textstyle\min_{H \in \mathbb{R}^{n \times m}}\{\|H\|_1 : \mbox{\rm{\ref{P1}+\ref{P3}}}\} \tag{$P_{13}^1$}\label{p131} \\
        & \textstyle\min_{H \in \mathbb{R}^{n \times m}}\{\|H\|_1 : AH = AA^{\dagger}\}
        \tag{$P_{\cal{R}}^1$}\label{p1proj13}\\
        & \textstyle\min_{H \in \mathbb{R}^{n \times m}}\{\|H\|_1 : \text{\ref{PLS}}\}
        \tag{$P_{\text{PLS}}^1$}\label{ppls1}\\
        &\textstyle\min_{\jatop{W \in \mathbb{R}^{(n-r) \times (m-r),}}{ Z\in\mathbb{R}^{(n-r) \times r}}}\{\|V_1D^{-1}U_1^\top + V_2WU_2^\top + V_2ZU_1^\top\|_1\} \tag{$\mathcal{P}_{13}^1$} \label{calP13}
\end{align}
Theorem  \ref{prop:p1p3} leads us to the natural formulation \ref{p131}\,.
Theorems
\ref{thm:proj_p1p3_Rhode} and \ref{lem:P1P3eqPLS} lead us to formulations
\ref{p1proj13}\,, \ref{ppls1}\,, respectively.  
The formulation \ref{calP13} is derived from
Theorem \ref{thm:structural}, setting $X:=D^{-1}$ and $Y:=0$ (similar to 
what is done in \cite{PFLX_ORL}).
All four of these formulations can easily be reformulated as LPs, while
previous computational work, using linear programming, was only for \ref{p131}\,.
It is easy to see that the first three formulations have identical feasible regions, but linear-programming solvers could have very different behaviors on the algebraic formulations. 
The following sparsity bound, due to \cite{XFLPsiam},
which applies to the extreme points of the first three of these formulations (the ones with $H$ as the variable) 
is derived from analyzing
 \ref{p1proj13} (as can be seen in the first line of its proof in \cite{XFLPsiam}).
 
\begin{theorem}[\protect{\cite[Prop. 3.1.1]{XFLPsiam}}]\label{prop:basicguarantee}
Suppose that $A\in \mathbb{R}^{m\times n}$ has rank $r$.
 Extreme solutions of the standard linear-programming reformulation of
\ref{p131}
 have at most $mr$ nonzeros. Furthermore, the bound  is sharp for all  $m\geq n\geq r\geq 1$.  
\end{theorem}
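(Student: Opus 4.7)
The plan is to work with the formulation \ref{p1proj13}, since Theorems~\ref{thm:proj_p1p3_Rhode} and \ref{lem:P1P3eqPLS} ensure that \ref{p131}, \ref{p1proj13}, and \ref{ppls1} share an identical feasible region, so a nonzero-count bound on extreme points of one standard-form LP transfers to the others. The standard linear-programming reformulation is obtained by writing $H = H^+ - H^-$ with $H^+, H^- \geq 0$, minimizing $\langle J, H^+\rangle + \langle J, H^-\rangle$ subject to $AH^+ - AH^- = AA^{\dagger}$; this is a system with $2mn$ nonnegative variables and $m^2$ scalar equality constraints.

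The crux of the upper bound is a rank count. I would observe that the linear map $\phi: \mathbb{R}^{n\times m} \to \mathbb{R}^{m\times m}$ defined by $\phi(H) := AH$ has image equal to $\{M \in \mathbb{R}^{m \times m} : \mathcal{R}(M) \subseteq \mathcal{R}(A)\}$, which has dimension exactly $mr$ (each of the $m$ columns of $M$ independently ranges over the $r$-dimensional space $\mathcal{R}(A)$). Consequently the coefficient matrix of the equality constraints in the standard-form LP has rank exactly $mr$, and any basic feasible solution has at most $mr$ positive variables among the $2mn$ entries of $(H^+, H^-)$.

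Next I would establish the extreme-point complementarity property: if both $H^+_{ij}$ and $H^-_{ij}$ were strictly positive for some $(i,j)$, then for all sufficiently small $\varepsilon > 0$ the perturbed pairs $(H^+ \pm \varepsilon\, \mathbf{e}_i \mathbf{e}_j^\top,\, H^- \pm \varepsilon\, \mathbf{e}_i \mathbf{e}_j^\top)$ are both feasible (the equality constraints are unaffected by adding $\varepsilon\,\mathbf{e}_i\mathbf{e}_j^\top$ to both $H^+$ and $H^-$) and average back to the original, contradicting extremality. Hence $H^+_{ij} H^-_{ij} = 0$ for every $(i,j)$, so the number of nonzeros of $H = H^+ - H^-$ equals the number of positive variables in $(H^+, H^-)$, which is at most $mr$.

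The main obstacle is the sharpness assertion: for every triple $m \geq n \geq r \geq 1$, one must exhibit a specific rank-$r$ matrix $A \in \mathbb{R}^{m \times n}$ whose standard-form LP admits an optimal extreme solution attaining exactly $mr$ nonzeros. A natural approach, in the spirit of Theorem~\ref{thm:structural}, is to choose $A$ whose SVD factors $U_1, V_1, D$ are arranged so that the ``base'' term $V_1 D^{-1} U_1^\top$ is dense (all $nm$ entries nonzero) while the minimum 1-norm is attained by setting the free blocks $Z = 0$ and $W = 0$, leaving $H = V_1 D^{-1} U_1^\top$; one then verifies that this $H$ is both optimal for \ref{p131} and an extreme point of the standard-form LP (the latter via linear independence of the active constraint gradients restricted to the support of $H$). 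The nonzero count is controlled by restricting attention to the $r$-dimensional range: although $H$ has $nm$ potentially nonzero entries in this construction, one refines the choice of $U_1$ and $V_1$ (for instance through carefully chosen rotations or permutations) so that the support of $V_1 D^{-1} U_1^\top$ has exactly $mr$ cells --- matching the upper bound.
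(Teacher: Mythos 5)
The paper does not actually prove this statement; it is imported verbatim from \cite[Prop.~3.1.1]{XFLPsiam}, with the remark that the original proof proceeds by analyzing \ref{p1proj13}. Your upper-bound argument is correct and is essentially the intended one: the three $H$-variable formulations cut out the same affine subspace, so the rank of any equality system describing it is an invariant; vectorizing $AH=AA^\dagger$ gives the constraint matrix $\bigl[\,I_m\otimes A,\ -I_m\otimes A\,\bigr]$ of rank $mr$, and a basic feasible solution of a standard-form LP has at most $\rank$-many positive variables. This is exactly the scheme the paper itself uses for Theorem~\ref{prop:basicguarantee3} via Lemma~\ref{lemma:basic} (the complementarity step $H^+_{ij}H^-_{ij}=0$ is not even needed for the upper bound, since $H_{ij}\neq 0$ already forces at least one of the pair to be positive).

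The genuine gap is the sharpness assertion, which is half of the statement and which you only gesture at. You must, for \emph{every} triple $m\geq n\geq r\geq 1$, exhibit a rank-$r$ matrix $A$ together with an optimal extreme solution of the standard-form LP having exactly $mr$ nonzeros; saying that one ``refines the choice of $U_1$ and $V_1$ \ldots through carefully chosen rotations or permutations'' is a hope, not a construction. Your sketch also leaves the three essential verifications undone: that the candidate $H=V_1D^{-1}U_1^\top$ (i.e., $A^\dagger$) is a 1-norm \emph{minimizer} over the feasible set (generically it is not --- the whole point of the paper is that $A^\dagger$ is usually denser and of larger 1-norm than the optimum), that it is an \emph{extreme point} of the lifted polyhedron, and that its support has exactly $mr$ cells. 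Note moreover that a dense $V_1D^{-1}U_1^\top$ has up to $nm>mr$ nonzeros when $n>r$, so the construction must be arranged so that exactly $n-r$ entries per column vanish while the remaining $mr$ do not, and extremality must then be certified (e.g., by checking that the $mr$ columns of the constraint matrix indexed by the support are linearly independent). Without an explicit family of instances and these checks, the sharpness claim is unproven.
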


\subsection{DRS for \texorpdfstring{\ref{p131}}{P1PLS}}\label{sec:DRS13}

We are interested in solving a problem in the form
\begin{equation}
    \textstyle\min_{H \in \mathbb{R}^{n \times m}}\{ f(H) + g(H)\}, \label{prob:unconstraintedH}
\end{equation}
where $f, g : \mathbb{R}^{n \times m} \rightarrow \mathbb{R} \cup \{ \infty \}$ are convex, closed and proper. To solve \eqref{prob:unconstraintedH}, 
we will apply the Douglas-Rachford Splitting (DRS) algorithm (see, for example,  \cite{dossal2024optimizationorderalgorithms}, \cite{BauschkeCombettes2017}), by iteratively computing, for $k=0,1,\ldots$, 
 \[
    \begin{array}{ll}
            &H^{k+1/2} := \prox_{\lambda f}(V^k), \\
            &V^{k+1/2} := 2H^{k+1/2} - V^k, \\
            &H^{k+1} := \prox_{\lambda g}(V^{k+1/2}), \\
            &V^{k+1} := V^{k} + H^{k+1} - H^{k+1/2},
    \end{array}
\]
where  for $h: \mathbb{R}^{n \times m} \rightarrow \mathbb{R} \cup \{ \infty \}$, convex, closed, and proper, 
\begin{equation*}
    \textstyle\prox_{\lambda h}(V) := \argmin_H\{h(H) + \frac{1}{2\lambda}\|H - V\|_{F}^{2}\}.
\end{equation*}
Initialization amounts to choosing $V^0$, which we discuss in \S\ref{sec:initialpoint}.

In our situation, $f := \|\cdot\|_1$ and $g := \mathcal{I}_{C}$ (the characteristic function of an affine set $\mathcal{C}$).
  We note that $f, g : \mathbb{R}^{n \times m} \rightarrow \mathbb{R} \cup \{ \infty \}$ are convex, closed and proper. 
 From \cite{Parikh_Boyd}, we have  that the proximal operators for $f$ and $g$ are given by
\begin{align*}
    &\prox_{\lambda f}(V) = S_{\lambda}(V),\\
    &\prox_{\lambda g}(V) = \Pi_{\mathcal{C}}(V),
\end{align*}
where $S_{\lambda}(\cdot)$ is the element-wise soft thresholding operator, defined  as
\begin{equation}\label{softthresholding}
    S_\kappa(a) := \left\{\begin{array}{ll}
    a - \kappa, \quad & a > \kappa;\\ 
    0, & |a| \leq \kappa;\\ 
    a + \kappa, &a < -\kappa,
\end{array}\right.
\end{equation}
and $\Pi_{\mathcal{C}}(\cdot)$ is the projection onto the affine set $\mathcal{C}$.

We note that the DRS algorithm can be seen as seeking to generate a sequence $V^0$, $V^1$, $V^2$, $\ldots$ , that converges to a fixed-point of the mapping defined by $F(V):= V + \Pi_{\mathcal{C}}( 2  S_{\lambda}(V) - V) -  S_{\lambda}(V)$.

Finally, we should address some details before implementing DRS to solve \eqref{prob:unconstraintedH}. The first is the projection $\Pi_{\mathcal{C}}(\cdot)$, which in our case, where $\mathcal{C}$ is affine, admits a closed-form solution. Furthermore, we should address the initialization of the algorithm as well as its stopping criteria, for which we may need to evaluate the residuals. The primal residual can be easily calculated using the equations that formulate $\mathcal{C}$, while the dual residual requires us to solve a specific problem that in many cases, such as when $\mathcal{C}$ is affine, admits a closed-form solution.  We discuss these points in the remainder of this section.

\medskip

\subsubsection{Projection onto the affine set $\mathcal{C}$ and residuals}\label{sec:proj_p13}

Here, we consider applying DRS specifically to the reformulation of \ref{p131} given by \ref{ppls1}\,. In this case, we have  $\mathcal{C} := \{H : A^\top AH = A^\top\}$. In the following, we address how to compute a projection onto this affine set and how to compute the primal and dual residuals, based on \cite{Fu_2020}. 

\begin{proposition}\label{projP13}
    If $\mathcal{C} := \{H : A^\top AH = A^\top\}$, for $A\in\mathbb{R}^{m\times n}$, then 
    \[
    \Pi_{\mathcal{C}}(V) = V - A^\dagger A V   + A^\dagger.
    \]
\end{proposition}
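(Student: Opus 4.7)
The plan is to exploit the affine structure of $\mathcal{C}$. First I would observe that $\mathcal{C}$ is a translate of the linear subspace $\mathcal{L} := \{H : A^\top A H = 0\}$. Since $\|AH\|_F^2 = \tr(H^\top A^\top A H)$, the condition $A^\top A H = 0$ is equivalent to $AH=0$, so $\mathcal{L} = \{H : AH = 0\}$. This means each column of an element of $\mathcal{L}$ lies in $\mathcal{K}(A)$, and the orthogonal projector onto $\mathcal{L}$ (with respect to the Frobenius inner product) is the column-wise application of the orthogonal projector onto $\mathcal{K}(A)$. Because $A^\dagger A$ is the orthogonal projector onto $\mathcal{R}(A^\top)$ (this is standard, and also recoverable from Theorem~\ref{thm:structural}), the projector onto $\mathcal{K}(A)$ is $I - A^\dagger A$, giving
\[
\Pi_{\mathcal{L}}(M) = (I - A^\dagger A)M = M - A^\dagger A M.
\]

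Next I would exhibit a particular element of $\mathcal{C}$, the natural candidate being $H_0 := A^\dagger$. To verify $A^\top A A^\dagger = A^\top$, I would transpose \ref{P1} ($AA^\dagger A = A$) and combine with the symmetry of $AA^\dagger$ from \ref{P3}: specifically, $A^\top A A^\dagger = A^\top (AA^\dagger)^\top = (A A^\dagger A)^\top = A^\top$. Hence $A^\dagger \in \mathcal{C}$.

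With these pieces in hand, the projection formula follows from the standard identity for affine sets,
\[
\Pi_{\mathcal{C}}(V) = H_0 + \Pi_{\mathcal{L}}(V - H_0) = A^\dagger + (I - A^\dagger A)(V - A^\dagger),
\]
and expanding yields $V - A^\dagger A V + A^\dagger - A^\dagger + A^\dagger A A^\dagger$. Applying \ref{P2} in the form $A^\dagger A A^\dagger = A^\dagger$ collapses this to $V - A^\dagger A V + A^\dagger$, as claimed. As a sanity check, I would verify feasibility directly: $A^\top A(V - A^\dagger A V + A^\dagger) = A^\top A V - A^\top(A A^\dagger A) V + A^\top = A^\top$ using \ref{P1}.

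I do not expect any real obstacle here. The main point requiring care is the identification of the projector onto $\mathcal{L}$ in the Frobenius geometry (which is really a column-wise statement), and making sure the right M-P properties are invoked at each step: \ref{P1} and \ref{P3} to place $A^\dagger$ in $\mathcal{C}$, then \ref{P2} to simplify the resulting expression.
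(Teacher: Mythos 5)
Your proof is correct, but it takes a genuinely different route from the paper's. The paper proceeds via the Lagrangian of the constrained least-distance problem: it sets $\nabla_H\mathcal{L}=0$ to get $H=V-\tfrac12 A^\top A\Lambda$, then runs a long chain of multiplications by $A^\dagger$ and ${A^\dagger}^\top$ to solve the resulting normal-type equation for a multiplier $\hat\Lambda$, substitutes back, and finally checks feasibility separately (necessary because several steps in that chain are one-directional implications). You instead use the elementary affine-set decomposition $\mathcal{C}=A^\dagger+\mathcal{L}$ with $\mathcal{L}=\{H: A^\top AH=0\}=\{H:AH=0\}$, identify $\Pi_{\mathcal{L}}$ column-wise as multiplication by $I-A^\dagger A$ (the projector onto $\mathcal{K}(A)$), and apply $\Pi_{\mathcal{C}}(V)=A^\dagger+\Pi_{\mathcal{L}}(V-A^\dagger)$. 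All the individual steps check out: the equivalence $A^\top AH=0\Leftrightarrow AH=0$ via $\|AH\|_F^2=\tr(H^\top A^\top AH)$, membership $A^\dagger\in\mathcal{C}$ via \ref{P1}+\ref{P3}, and the simplification via \ref{P2}. Your argument is shorter, self-verifying (the affine projection formula is exact, so no separate feasibility check is logically required), and avoids guessing a particular solution of the multiplier equation. What the paper's heavier Lagrangian machinery buys is a uniform template that extends to Propositions \ref{projP123} and \ref{projP134}, where $\mathcal{C}$ is cut out by two coupled linear conditions and the direct sum/decoupling structure you exploit here is less immediate.
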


\begin{proof}
    Let $V \in \mathbb{R}^{n \times m}$ be a given matrix. We want to find the closed-form solution of
    \begin{equation}\label{prob:proj_p13}
      \textstyle\min_{H \in \mathbb{R}^{n \times m}}\{\|H-V\|_F^2 : A^\top AH = A^\top\}.\tag{$\text{Proj}_{{13}}$}
    \end{equation}
   Using Lagrange multiplier $\Lambda\in\mathbb{R}^{n\times m}$,
   the Lagrangian of \ref{prob:proj_p13} is given by
    \begin{equation*}
        \mathcal{L}(H, \Lambda) := \|H-V\|_F^2 + \langle \Lambda, A^\top AH - A^\top \rangle.
    \end{equation*}
    As \ref{prob:proj_p13} is convex,  its pair of primal and dual solutions $(H, \Lambda)$ satisfies $ \nabla_H \mathcal{L}(H, \Lambda)=0$, or equivalently 
    %such that 
    \begin{equation}\label{grad13zero}
            H  = V - \textstyle\frac{1}{2}(A^\top A\Lambda ).
    \end{equation}
    From \eqref{grad13zero} and $A^\top AH = A^\top$, we have 
    \begin{equation*}
        \begin{aligned}
            A^\top & = A^\top AV - \textstyle\frac{1}{2}A^\top A(A^\top A\Lambda) & \Leftrightarrow \\
            A^\top A(A^\top A\Lambda) & = 2(A^\top AV - A^\top) & \Rightarrow \\
            {A^\dagger}^\top A^\top AA^\top A\Lambda & = 2{A^\dagger}^\top(A^\top AV - A^\top) & \Leftrightarrow \\
            AA^\top A\Lambda & = 2(AV - AA^\dagger) & \Rightarrow \\
            A^\dagger AA^\top A\Lambda & = 2(A^\dagger AV - A^\dagger) & \Leftrightarrow \\
            A^\top A\Lambda & = 2(A^\dagger AV - A^\dagger) & \Rightarrow \\
            {A^\dagger}^\top A^\top A\Lambda & = 2{A^\dagger}^\top(A^\dagger AV - A^\dagger) & \Leftrightarrow \\
            A\Lambda & = 2({A^\dagger}^\top V - {A^\dagger}^\top A^\dagger) & \Rightarrow \\
            A^\dagger A\Lambda & = 2A^\dagger({A^\dagger}^\top V - {A^\dagger}^\top A^\dagger) & \Leftrightarrow \\
            I_m \otimes (A^\dagger A)\vvec(\Lambda) & = 2\vvec(A^\dagger({A^\dagger}^\top V - {A^\dagger}^\top A^\dagger)) .\\
        \end{aligned}
    \end{equation*}
    As $I_m \otimes (A^\dagger A)$ is an orthogonal projection,  $\hat \Lambda = 2A^\dagger({A^\dagger}^\top V - {A^\dagger}^\top A^\dagger)$ solves the last equation above. Substituting it into \eqref{grad13zero} we obtain 
    \begin{equation*}
        \begin{aligned}
            H & = V - A^\top A(A^\dagger{A^\dagger}^\top V - A^\dagger{A^\dagger}^\top A^\dagger)  = V - A^\dagger A V + A^\dagger. 
        \end{aligned}
    \end{equation*}
    Furthermore, we can easily verify that the $H$ obtained is feasible to \ref{prob:proj_p13}\,. 
\qed
\end{proof}

Following \cite{Fu_2020}, we define the primal residual  at iteration $k$ of the DRS algorithm as
\begin{equation*}
    r_p^{k} :=  A^\top AH^{k+1/2} - A^\top.
\end{equation*}
To derive the dual residual at iteration $k$, we first consider the Lagrangian of  \ref{ppls1} and its subdifferential with respect to $H$:
\begin{equation*}
    \mathcal{L}(H, \Lambda) := \|H\|_1 + \langle \Lambda, A^\top AH - A^\top \rangle,
\end{equation*}
\begin{equation*}
    \partial_H \mathcal{L}(H, \Lambda) = \partial \|H\|_1 + A^\top A\Lambda.
\end{equation*}
The optimality conditions for \ref{ppls1} require that 
\begin{equation*}
    0 \in \partial \|H\|_1 + A^\top A\Lambda.
\end{equation*}
From $\textstyle\prox_{\lambda \|\cdot\|_1}(V) =  \textstyle\min_{H \in \mathbb{R}^{n \times m}}\{\|H\|_1
+\frac{1}{2\lambda}\|H - V\|_F^2\}$,  we have that 
\begin{equation*}
       \textstyle H  = \prox_{\lambda \|\cdot\|_1}(V) ~ \Leftrightarrow ~
        0  \in \partial\|H\|_1 + \frac{1}{\lambda}(H - V)  ~ \Leftrightarrow ~
        \frac{1}{\lambda}(V-H)  \in \partial \|H\|_1\,.
\end{equation*}
Then, considering that at iteration $k$ we have $H^{k+1/2} := \prox_{\lambda f}(V^k)$, our aim is to find $\Lambda$ such that
\begin{equation*}
    \textstyle\frac{1}{\lambda}(V^k-H^{k+1/2}) + A^\top A\Lambda = 0,
\end{equation*}
or at least gives us the smallest dual residual, which is equivalent to choosing $\Lambda$ by solving the following least-squares problem:
\begin{equation}\label{prob_13:dual_variable_ls2}
    \textstyle\argmin_{\Lambda\in\mathbb{R}^{n\times m}}\|A^\top A\Lambda  - \frac{1}{\lambda}(H^{k+1/2}-V^k)\|_F^2\,.
\end{equation}
The solution of \eqref{prob_13:dual_variable_ls2} is $\hat\Lambda := \textstyle\frac{1}{\lambda}A^\dagger{A^\dagger}^\top (H^{k+1/2}-V^k)$. 
Finally, evaluating the objective in  \eqref{prob_13:dual_variable_ls2} at $\hat\Lambda$, we obtain the following expression for the dual residual for \ref{ppls1}\,, at iteration $k$ of the DRS algorithm:
\begin{equation*}
    \begin{aligned}
        r_d^{k}  &:=  \textstyle \frac{1}{\lambda}(V^k-H^{k+1/2}) + \frac{1}{\lambda}A^\top AA^\dagger{A^\dagger}^\top (H^{k+1/2}-V^k)  \\
        &~=  \textstyle\frac{1}{\lambda}(V^k-H^{k+1/2}) + \frac{1}{\lambda}A^\top {A^\dagger}^\top (H^{k+1/2}-V^k)\\
        &~=  \textstyle\frac{1}{\lambda}(I- {A^\dagger}A) (V^k-H^{k+1/2}).\\
    \end{aligned}
\end{equation*}

\subsubsection{Initialization}\label{sec:initialpoint}

We define $V^0:=A^\dagger$, which is a feasible solution for \ref{ppls1}\,, requires no additional computational cost as it is already used in the calculation of the projection $\Pi_{\mathcal{C}}(\cdot)$ and the dual residual, and according to our tests, it presented the best performance. We also performed some preliminary experiments with other initializations, but the algorithm converged faster with $V^0:=A^\dagger$.

\subsubsection{Stopping criteria}\label{sec:stop_crit}

We consider two versions of our DRS algorithm with distinct stopping criteria, where we assume that $\epsilon^{\rm abs}, \epsilon^{\rm rel}$ are positive given tolerances.

In the first one, which we refer to as DRS$_{\mbox{\tiny res}}$\,,  we adopt the stopping criterion suggested in \cite{Fu_2020}, which considers a pair of absolute and relative tolerances $\epsilon^{\rm abs}, \epsilon^{\rm rel} > 0$, and the pair of primal and dual residuals in the form $r^{k} := (r_p^{k}, r_d^{k})$ computed at iteration $k$. We stop the algorithm if $\|r^{k}\|_F \leq   \epsilon^{\rm abs} + \epsilon^{\rm rel}\|r^0\|_F$\,, $k>0$. This criterion seeks both a small residual 
and a small residual relative  to the  residual of the initial iterate.

In the second one,  
which we refer to as DRS$_{\mbox{\tiny fp}}$\,, 
 we compute $\tau^{k} := H^{k+1} - H^{k+1/2}=V^{k+1} - V^{k}$ at iteration $k$, and we stop the algorithm if  $\|\tau^{k}\|_F \leq  \epsilon^{\rm abs} + \epsilon^{\rm rel}\|V^1-V^0\|_F$\,, $k>0$.
Note that from \cite{BauschkeCombettes2017} we have that if the sequence $V^k$ ($k=0,1,2,\ldots$), converges to a fixed point of $F(V):= V + \Pi_{\mathcal{C}}( 2  S_{\lambda}(V) - V) -  S_{\lambda}(V)$, then both sequences $H^{k+1/2}$ ($k=0,1,2,\ldots$) and $H^{k+1}$ ($k=0,1,2,\ldots$) converge to $H^*$, a solution of problem \eqref{prob:unconstraintedH}.

We observe that, despite the stopping criterion used, we always obtain a primal feasible solution from the DRS algorithms, taking as output the matrix $H^{k+1}$ computed in the last iteration $k$. Note that $H^{k+1}$ is obtained from a projection onto the feasible set of the problem addressed.

%%%%%%%%%%%%%

\subsection{Numerical experiments}\label{sec:numexp13}

To compare the different methods that we proposed to compute sparse universal least-squares solvers, we used the same set of test instances used in \cite{ponte2024goodfastrowsparseahsymmetric}. The instances have varied sizes and were constructed using the \texttt{MATLAB} function \texttt{sprand} to  randomly generate $m\!\times\! n$  matrices $A$ with  rank $r$.
They are divided into two categories related to $m$ with $n: = 0.5m$, $r := 0.25m$; small instances with $m := 100,\, 200,\dots,\,500$ 
and large instances with $m := 1000,\, 2000,\dots,\, 5000$.  

In our experiments, we used the solver \texttt{Gurobi} and our implementations of DRS, namely DRS$_{\mbox{\tiny res}}$ and DRS$_{\mbox{\tiny fp}}$\,, with the two stopping criteria proposed in \S\ref{sec:stop_crit}. The projection $\Pi_{\mathcal{C}}(\cdot)$, and the primal and dual residuals, are specified in \S\ref{sec:proj_p13}. In all the experiments with DRS algorithms, we set the parameter $\lambda := 10^{-2}$.

All results presented for \texttt{Gurobi} were obtained with the solution of \ref{ppls1}\,; with this formulation, \texttt{Gurobi} could solve the largest set of instances in our time limit, when we performed a preliminary experiment. The results for this experiment can be found in the Appendix.  It is interesting that the runtime varies significantly for the different linear-programming formulations developed for the problem. 
Despite the better performance of \ref{ppls1}\,, we note that \texttt{Gurobi} does not scale well compared to our implementations of the DRS algorithms, solving only the four smallest instances  within our time limit. 

After observing that DRS$_{\mbox{\tiny fp}}$ was the best performing method, converging faster than DRS$_{\mbox{\tiny res}}$ for solutions with similar 1-norms, we decided to report the results of an experiment where 
we first solved all instances with DRS$_{\mbox{\tiny fp}}$ and saved $\|H_{\mbox{\tiny fp}}\|_1$\,, the 1-norm of the solutions obtained. Then, we ran DRS$_{\mbox{\tiny res}}$\,, stopping the algorithm when  
the obtained solution $H$ satisfied $(\|H\|_1 - \|H_{\mbox{\tiny fp}}\|_1)/ \|H_{\mbox{\tiny fp}}\|_1 \le 10^{-5}$.

In Table \ref{tab:stats_p13}, we compare the results for \texttt{Gurobi} and DRS$_{\mbox{\tiny res}}$ with the results for DRS$_{\mbox{\tiny fp}}$\,.  For DRS$_{\mbox{\tiny fp}}$\,, we show the 0-norm and 1-norm of the solution of each instance. For \texttt{Gurobi} and DRS$_{\mbox{\tiny res}}$\,, we consider the solution $H$ obtained by each algorithm and compare it with the solution $H_{\mbox{\tiny fp}}$ obtained by DRS$_{\mbox{\tiny fp}}$\,, showing the factors $(\|H\| - \|H_{\mbox{\tiny fp}}\|)/\|H_{\mbox{\tiny fp}}\|$ for the 0-norm. For \texttt{Gurobi} we show the factor for the 1-norm as well (note that the 1-norm  factor is always about $10^{-5}$ for DRS$_{\mbox{\tiny res}}$ because of its stopping criterion). We also show the runtimes (in seconds) of \texttt{Gurobi} and the DRS algorithms. 

From the results in Table \ref{tab:stats_p13}, we see that although \texttt{Gurobi} finds a solution with a better 0-norm and a slightly better 1-norm, its runtime makes it uncompetitive in solving our problem. We also see that computing the residuals to adopt the stopping criterion of DRS$_{\mbox{\tiny res}}$ in our implementation causes its runtime to be much larger than the runtime  of DRS$_{\mbox{\tiny fp}}$ on average.
However, despite its worse performance, the stopping criterion of DRS$_{\mbox{\tiny res}}$ was satisfied for all instances within the time limit. Finally, we observe that DRS$_{\mbox{\tiny res}}$ obtains solutions with worse 0-norms, except for the smallest instance.

\begin{table}[ht!]
    \centering  
    \caption{Comparison against DRS$_{\mbox{\tiny fp}}$ for \ref{p131} ($n=0.5m$, $r=0.25m$)}
    \label{tab:stats_p13}
\begin{footnotesize}
        \begin{tabular}{r|rrr|rrr|rr}        
         \multicolumn{1}{c}{} & \multicolumn{3}{c|}{\texttt{Gurobi}} & \multicolumn{3}{c|}{DRS$_{\mbox{\tiny fp}}$} & \multicolumn{2}{c}{DRS$_{\mbox{\tiny res}}$} \\
        \hline       
        \multicolumn{1}{c|}{$m$} & \multicolumn{1}{c}{$\vphantom{\mbox{$X^{X^X}$}} \jatop{\vphantom{\mbox{$X^{X^X}$}}\|H\|_0}{\mbox{factor}}$} & \multicolumn{1}{c}{$\jatop{\|H\|_1}{\mbox{factor}}$} & \multicolumn{1}{c|}{time} & \multicolumn{1}{c}{$\|H\|_0$} & \multicolumn{1}{c}{$\|H\|_1$} & \multicolumn{1}{c|}{time} & \multicolumn{1}{c}{$\jatop{\|H\|_0}{\mbox{factor}}$} & \multicolumn{1}{c}{time} \\[4pt]
         \hline    \vphantom{\mbox{$X^{X^X}$}} 
        100 & -5.77e-2 & -7.61e-5 & 2.25 & 2565 & 182.35 & 0.23 & -1.09e-2 & 0.23 \\
        200 & -7.37e-2 & -7.70e-5 & 10.42 & 10554 & 504.89 & 1.69 & 1.51e-2 & 2.66 \\
        300 & -7.07e-2 & -9.61e-5 & 43.45 & 23672 & 816.25 & 2.64 & 6.38e-3 & 4.66 \\
        400 & -9.10e-2 & -1.37e-4 & 235.34 & 42762 & 1246.57 & 3.20 & 1.19e-2 & 10.83 \\
        500 & \multicolumn{1}{c}{$*$} & \multicolumn{1}{c}{$*$} & \multicolumn{1}{c|}{$*$} & 64977 & 1736.15 & 4.95 & 7.28e-3 & 20.66 \\
        \hline
        \vphantom{\mbox{$X^{X^X}$}} 1000 & \multicolumn{1}{c}{$*$} & \multicolumn{1}{c}{$*$} & \multicolumn{1}{c|}{$*$} & 263597 & 4541.65 & 26.79 & 6.92e-3 & 58.97 \\
        2000 & \multicolumn{1}{c}{$*$} & \multicolumn{1}{c}{$*$} & \multicolumn{1}{c|}{$*$} & 1013012 & 11002.67 & 91.68 & 3.35e-3 & 228.66 \\
        3000 & \multicolumn{1}{c}{$*$} & \multicolumn{1}{c}{$*$} & \multicolumn{1}{c|}{$*$} & 2224556 & 19076.73 & 174.10 & 2.31e-3 & 473.47 \\
        4000 & \multicolumn{1}{c}{$*$} & \multicolumn{1}{c}{$*$} & \multicolumn{1}{c|}{$*$} & 3853967 & 28058.49 & 362.91 & 8.25e-4 & 803.34 \\
        5000 & \multicolumn{1}{c}{$*$} & \multicolumn{1}{c}{$*$} & \multicolumn{1}{c|}{$*$} & 6031288 & 38046.54 & 483.86 & 1.21e-3 & 1053.28 \\
    \end{tabular}
    \end{footnotesize}
\end{table}

We note that while 1-norm minimization is commonly used to induce sparsity, there is no guarantee that this actually leads to sparser solutions. Therefore, it is interesting to verify the effectiveness of the induction method in our experiments. We observe that the smaller the 1-norms of our solutions, the sparser they are.

In Table \ref{tab:stats2_p13}, we compare the solution $H_{\mbox{\tiny fp}}$ obtained by DRS$_{\mbox{\tiny fp}}$ with the Moore-Penrose pseudoinverse of $A$, showing the factors $(\|H_{\mbox{\tiny fp}}\|- \|A^\dagger\|)/\|A^\dagger\|$ for the 0-norm and the 1-norm. We also show the ratios between the rank of $H_{\mbox{\tiny fp}}$ and $n$, and between the 0-norm of $H_{\mbox{\tiny fp}}$ and the upper bound $\beta_{13}:=mr$ on the number of nonzero elements of extreme solutions of the standard linear-programming reformulation of \ref{p131}
(see Theorem \ref{prop:basicguarantee}). Finally, we show this last ratio for \texttt{Gurobi}'s solutions.

\begin{table}[ht]
    \centering  
    \caption{More statistics for DRS$_{\mbox{\tiny fp}}$ for \ref{p131} ($n=0.5m$, $r=0.25m$)}
    \label{tab:stats2_p13}
\begin{footnotesize}
        \begin{tabular}{r|r|rrrr}      
         \multicolumn{1}{c}{} & \multicolumn{1}{c|}{\texttt{Gurobi}} & \multicolumn{4}{c}{DRS$_{\mbox{\tiny fp}}$} \\ 
        \hline    
        \multicolumn{1}{c|}{$m$} & $\frac{\vphantom{\mbox{$X^{X^X}$}} \|H\|_0}{\beta_{13}}$ & $\frac{\|H\|_0}{\beta_{13}}$ & $\jatop{\|H\|_0}{\mbox{factor}}$ & $\jatop{\|H\|_1}{\mbox{factor}}$ & $\frac{\text{rank}(H)}{n}$ \\ [4pt]
        \hline
 \vphantom{\mbox{$X^{X^X}$}} 
        100 & 0.967 & 1.026 & -0.461 & -0.182 & 0.960 \\
        200 & 0.978 & 1.055 & -0.439 & -0.171 & 0.980 \\
        300 & 0.978 & 1.052 & -0.439 & -0.186 & 0.980 \\
        400 & 0.972 & 1.069 & -0.433 & -0.185 & 0.990 \\
        500 & \multicolumn{1}{c|}{$*$} & 1.040 & -0.446  & -0.182 & 1.000 \\
        \hline
             \vphantom{\mbox{$X^{X^X}$}} 
        1000 & \multicolumn{1}{c|}{$*$} & 1.054 & -0.434 & -0.186 & 0.998 \\
        2000 & \multicolumn{1}{c|}{$*$} & 1.013 & -0.450 & -0.199 & 1.000 \\
        3000 & \multicolumn{1}{c|}{$*$} & 0.989 & -0.456 & -0.201 & 0.999 \\
        4000 & \multicolumn{1}{c|}{$*$} & 0.963 & -0.466 & -0.198 & 1.000 \\
        5000 & \multicolumn{1}{c|}{$*$} & 0.965 & -0.462 & -0.202 & 0.999 \\
    \end{tabular}
    \end{footnotesize}
\end{table}

We can observe from the results in Table \ref{tab:stats2_p13} that by minimizing the 1-norm, we obtain ah-symmetric generalized inverses with the 0-norm reduced by about 55\% and the 1-norm reduced by about 80\%, compared to $A^\dagger$. We see that the number of nonzero elements in the \texttt{Gurobi} solutions is about 97\% of the upper bound $mr$ for extreme solutions. The number of nonzero elements in the DRS$_{\mbox{\tiny fp}}$ solutions is also not far from this bound. Interestingly, we have larger ratios for the smallest instances, but we note that the ratios might decrease if we let DRS$_{\mbox{\tiny fp}}$ run longer, compromising, however, the advantage of its fast convergence.

\section{Sparse universal minimum-rank solvers for least-squares }\label{sec:ah-ref}

We now search for sparse universal least-squares solvers of low rank.  Low rank is an interesting feature for an ah-symmetric generalized inverse $H$ in the context of the least-squares application, as it corresponds to a type of explainability for the associated linear model. The following theorem provides a key result in incorporating this feature into our studies.  

\begin{theorem}
[see e.g. 
\protect{\cite[Lem. 1, part d and Thm. 2 of Chap. 1]{BenIsrael1974}}]
\label{thm:rank} 
If $H$ is a generalized inverse of $A$, then
$\rank(H)\geq \rank(A)$, with equality if and only if  {\rm\ref{P2}} holds (i.e., if and only if $H$ is reflexive).
\end{theorem}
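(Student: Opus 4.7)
My plan is to prove both parts simultaneously by invoking the SVD-based structural characterization in Theorem \ref{thm:structural}. Writing the full SVD $A = U\Sigma V^\top$ with top-left invertible block $D$ of size $r \times r$ (where $r = \rank(A)$), I would parametrize $H = V\Gamma U^\top$ with the conforming block partition $\Gamma = \begin{bmatrix} X & Y \\ Z & W \end{bmatrix}$. Since $U$ and $V$ are orthogonal, $\rank(H) = \rank(\Gamma)$. The hypothesis that $H$ satisfies \ref{P1} then forces, by part $(i)$ of Theorem \ref{thm:structural}, that $X = D^{-1}$, so the upper-left block of $\Gamma$ is $r\times r$ and invertible.

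Next, I would perform a Schur-complement reduction, exploiting invertibility of $X = D^{-1}$. Left-multiplying $\Gamma$ by the unit-triangular block matrix $\begin{bmatrix} I & 0 \\ -ZD & I \end{bmatrix}$ kills the $Z$-block, and right-multiplying the result by $\begin{bmatrix} I & -DY \\ 0 & I \end{bmatrix}$ kills the $Y$-block. Both operations are rank-preserving since the triangular factors are invertible. The reduced matrix is the block-diagonal $\diag(D^{-1},\, W - ZDY)$, so
\[
\rank(H) = \rank(\Gamma) = r + \rank(W - ZDY).
\]

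This single identity delivers both claims. The nonnegativity of $\rank(W - ZDY)$ immediately yields $\rank(H) \geq r = \rank(A)$. Equality holds exactly when $W - ZDY = 0$, and by part $(ii)$ of Theorem \ref{thm:structural} that condition is, under the standing assumption of \ref{P1}, equivalent to \ref{P2}; hence equality of ranks is equivalent to $H$ being reflexive.

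There is no substantive obstacle: the argument is a Schur-complement calculation once Theorem \ref{thm:structural} is invoked. The only points requiring care are verifying that the unit-triangular block factors used for elimination are invertible (they have determinant one), and correctly tracking $X^{-1} = D$ when reading off the Schur complement $W - ZDY$ in the lower-right block.
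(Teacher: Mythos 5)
Your argument is correct, but note that the paper does not actually prove this statement: it is quoted from Ben-Israel and Greville, so the comparison is with the classical textbook argument rather than with anything in the text. The standard route is more elementary: from \ref{P1}, $\rank(A)=\rank(AHA)\le\rank(H)$ by submultiplicativity of rank; if \ref{P2} also holds, the same inequality applied to $H=HAH$ gives $\rank(H)\le\rank(A)$; and for the converse one observes that $HA$ is idempotent of rank $\rank(A)$ with $\mathcal{R}(HA)\subseteq\mathcal{R}(H)$, so equality of ranks forces $\mathcal{R}(HA)=\mathcal{R}(H)$ and hence $HAH=H$. Your proof instead leans on Theorem \ref{thm:structural}: orthogonality of $U,V$ gives $\rank(H)=\rank(\Gamma)$, \ref{P1} forces $X=D^{-1}$ invertible, and the unit-triangular eliminations correctly produce $\rank(\Gamma)=r+\rank(W-ZDY)$, after which part $(ii)$ of Theorem \ref{thm:structural} identifies the equality case with \ref{P2}. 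Both are sound; yours has the virtue of reusing machinery the paper already develops and of exhibiting the rank deficit explicitly as $\rank(W-ZDY)$, at the cost of requiring the SVD, whereas the classical proof needs only elementary rank inequalities and works verbatim over any field.
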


Based on Theorems  \ref{prop:p1p3} and \ref{thm:rank}, we can see that if $H$ is an ah-symmetric reflexive generalized inverse of $A$, then $H$ is a minimum-rank universal least-squares solver for $A$. 
 In  Theorem \ref{thm:propertyPLSr} we characterize the properties \ref{P1}+\ref{P2}+\ref{P3}, of ah-symmetric reflexive generalized inverses, and the linear system \ref{PLSr} defined below.

\begin{theorem}\label{thm:propertyPLSr}
    $H$ satisfies \rm{\ref{P1}+\ref{P2}+\ref{P3}}
    (i.e., $H$ is an ah-symmetric reflexive generalized inverse of $A$) if and only if $H$ satisfies 
    \begin{equation}
    A^\top H^\top A^\top  + HAA^\dagger  = A^\top  + H. \label{PLSr}\tag{PLSr}   
    \end{equation}
\end{theorem}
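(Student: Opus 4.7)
The plan is to prove both implications of the stated equivalence, using Theorem~\ref{thm:proj_p1p3_Rhode} and the standard fact that $AA^\dagger$ is the symmetric orthogonal projector onto $\mathcal{R}(A)$ (so $A^\top AA^\dagger = A^\top$).

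\smallskip

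For the forward direction, I would split \ref{PLSr} into two matrix identities and establish each separately. First, \ref{P1} gives $AHA = A$, and transposing yields $A^\top H^\top A^\top = A^\top$. Second, combining \ref{P1}+\ref{P3} with Theorem~\ref{thm:proj_p1p3_Rhode} yields $AH = AA^\dagger$, and then \ref{P2} gives $H = HAH = H(AH) = H\,AA^\dagger$. Adding these two identities produces exactly $A^\top H^\top A^\top + HAA^\dagger = A^\top + H$.

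\smallskip

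For the reverse direction, the key step is to recover \ref{P1} first. I would rewrite \ref{PLSr} as
\[
A^\top H^\top A^\top - A^\top \;=\; H - HAA^\dagger \;=\; H(I - AA^\dagger),
\]
and then right-multiply both sides by $AA^\dagger$. The right-hand side becomes $H(I - AA^\dagger)AA^\dagger = 0$ since $(I - AA^\dagger)AA^\dagger = 0$, while the left-hand side is unchanged because $A^\top AA^\dagger = A^\top$ (using symmetry of $AA^\dagger$). This yields $A^\top H^\top A^\top = A^\top$, and transposing gives $AHA = A$, which is \ref{P1}. Substituting $A^\top H^\top A^\top = A^\top$ back into \ref{PLSr} reduces it to $HAA^\dagger = H$. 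Then $AH = AH\,AA^\dagger = (AHA)A^\dagger = AA^\dagger$ (by \ref{P1}), and since $AA^\dagger$ is symmetric this establishes \ref{P3}; finally $HAH = H(AH) = HAA^\dagger = H$ gives \ref{P2}.

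\smallskip

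The main obstacle is the reverse direction: recognizing that the two addends of \ref{PLSr}, after moving the right-hand side across, lie in complementary subspaces in the sense that right-multiplication by $AA^\dagger$ kills one of them while preserving the other. Once \ref{P1} has been isolated in this way, \ref{P2} and \ref{P3} follow by short algebraic substitutions, and the forward direction is essentially a direct verification.
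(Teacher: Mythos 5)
Your proof is correct. The forward direction matches the paper's argument in substance (transpose \ref{P1}, use Theorem~\ref{thm:proj_p1p3_Rhode} to replace $AH$ by $AA^\dagger$, combine with \ref{P2}), but your reverse direction takes a genuinely different route. The paper passes to the SVD coordinates of Theorem~\ref{thm:structural}, writing $A = U\Sigma V^\top$ and $H = V\Gamma U^\top$, reducing \ref{PLSr} to block equations in $X,Y,Z,W$ and reading off $Y=0$, $W=0$, $X=D^{-1}$. You instead work directly with projector identities: right-multiplying the rearranged equation $A^\top H^\top A^\top - A^\top = H(I-AA^\dagger)$ by $AA^\dagger$ annihilates the right-hand side (idempotency) while fixing the left-hand side (since $A^\top AA^\dagger = A^\top$ by symmetry of $AA^\dagger$), which isolates \ref{P1}; the remaining properties then follow by substitution. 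Each step checks out. Your approach is more elementary in that it avoids the SVD machinery entirely and could be stated without Theorem~\ref{thm:structural}; the paper's approach is more systematic and reuses the same block-decomposition template it applies to the companion result, Theorem~\ref{lem:PMX}, so the two proofs read uniformly. Either argument is a complete and valid proof of the equivalence.
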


\begin{proof}
\phantom{.}

\noindent 1. $(\Rightarrow)$: If $H$ satisfies \ref{P1}+\ref{P2}+\ref{P3} then $H$ satisfies \ref{PLSr}.

    Because $H$ satisfies \ref{P1}+\ref{P2}, we have
    \begin{equation*}
            A^\top H^\top A^\top  + HAH  = A^\top  + H  \Rightarrow 
            A^\top H^\top A^\top  + HAA^\dagger   = A^\top  + H,
    \end{equation*}
    where we used that $AH = AA^\dagger $ by Theorem \ref{thm:proj_p1p3_Rhode}, considering that $H$ satisfies \ref{P1}+\ref{P3}.

\noindent     2. $(\Leftarrow)$: If $H$ satisfies \ref{PLSr} then $H$ satisfies \ref{P1}+\ref{P2}+\ref{P3}.

    Consider the full SVD of $A$, given by $A = U\Sigma V^\top $ and the representation of $H$ given by $H = V\Gamma U^\top $ (see Theorem \ref{thm:structural}). Because $H$ satisfies \ref{PLSr}, we have
    \begin{equation*}
        \begin{aligned}
            A^\top H^\top A^\top  + HAA^\dagger  & = A^\top  + H & \Rightarrow \\
            V\Sigma^\top U^\top  U\Gamma^\top V^\top  V\Sigma^\top U^\top  + V\Gamma U^\top  U\Sigma V^\top   V\Sigma^\dagger  U^\top  & = V\Sigma^\top U^\top  + V\Gamma U^\top  & \Rightarrow \\
            V \Sigma^\top \Gamma^\top \Sigma^\top U^\top  + V\Gamma \Sigma \Sigma^\dagger U^\top  & = V(\Sigma^\top  + \Gamma)U^\top  & \Rightarrow \\
            \Sigma^\top \Gamma^\top \Sigma^\top  + \Gamma \Sigma \Sigma^\dagger  & = \Sigma^\top  + \Gamma & \Rightarrow \\
            \begin{bmatrix} D^\top  & 0 \\ 0 & 0 \end{bmatrix}\begin{bmatrix} X^\top  & Z^\top  \\ Y^\top  & W^\top  \end{bmatrix}\begin{bmatrix} D^\top  & 0 \\ 0 & 0 \end{bmatrix} \! + \! \begin{bmatrix} X & Y \\ Z & W \end{bmatrix}\begin{bmatrix} D & 0 \\ 0 & 0 \end{bmatrix}\begin{bmatrix} D^{-1} & 0 \\ 0 & 0 \end{bmatrix} & = \begin{bmatrix} D^\top  & 0 \\ 0 & 0 \end{bmatrix} \! + \! \begin{bmatrix} X & Y \\ Z & W \end{bmatrix} & \Rightarrow \\
            \begin{bmatrix} D^\top X^\top D^\top  & 0 \\ 0 & 0 \end{bmatrix} \! + \! \begin{bmatrix} X & 0 \\ Z & 0 \end{bmatrix} & = \begin{bmatrix} D^\top  & 0 \\ 0 & 0 \end{bmatrix} \! + \! \begin{bmatrix} X & Y \\ Z & W \end{bmatrix} & \Rightarrow \\
            \begin{bmatrix} D^\top X^\top D^\top  & 0 \\ 0 & 0 \end{bmatrix} & = \begin{bmatrix} D^\top  & Y \\ 0 & W \end{bmatrix}.
        \end{aligned}
    \end{equation*}
From the last equation, we see that  $Y \!=\! 0$ and $W \!=\! 0$, and $D^\top X^\top D^\top  \!=\! D^\top$, so  $X \!=\! D^{-1}$. Then, by Theorem \ref{thm:structural}, we see that $H$ satisfies \rm{\ref{P1}+\ref{P2}+\ref{P3}}.
   \qed
\end{proof}

The problem of finding a 1-norm minimizing \emph{minimum-rank} universal least-squares solver
can be formulated as any of the following optimization problems:
\begin{align}
   & \textstyle\min_{H \in \mathbb{R}^{n \times m}}\{\|H\|_1 : \rm{\mbox{\ref{P1}+\ref{P2}+\ref{P3}}}\}, \tag{$P_{123}^1$}\label{p1231a} \\
   & \textstyle\min_{H \in \mathbb{R}^{n \times m}}\{\|H\|_1 : \mbox{\rm{\ref{P1}}+$(HAA^\dagger = H)$+\rm{\ref{P3}}}\}\, , 
   \tag{$P_{12_{\mbox{\tiny{lin}}}3}^1$}
   \label{p1231lin} \\
    & \textstyle\min_{H \in \mathbb{R}^{n \times m}}\{\|H\|_1 :\mbox{ $(AH=AA^\dagger)$+$(HAA^\dagger = H)$}\}\, , 
    \tag{$P_{\mathcal{R}2_{\mbox{\tiny{lin}}}}^1$}
    \label{p1proj13p2lin} \\
    & \textstyle\min_{H \in \mathbb{R}^{n \times m}}\{\|H\|_1 : \mbox{\ref{PLS}+$(HAA^\dagger = H)$}\}\, , 
    \tag{$P_{\text{PLS}2_{\mbox{\tiny{lin}}}}^1$}
    \label{p1plsp2lin} \\
    & \textstyle\min_{H \in \mathbb{R}^{n \times m}}\{\|H\|_1 : \text{\ref{PLSr}}\}\, , \tag{$P_{\text{PLSr}}^1$}\label{pplsr1}\\
    & \textstyle\min_{Z\in\mathbb{R}^{(n-r) \times r}}\left\| V_1D^{-1}U_1^\top + V_2ZU_1^\top\right\|_1\,.\tag{\mbox{$\mathcal{P}^1_{123}$}}\label{calP123}
\end{align}

Theorems \ref{prop:p1p3} and \ref{thm:rank} lead us to the natural formulation \ref{p1231a}\,, which we do not actually compute with  because it is nonconvex, due to \ref{P2}.
From \ref{p1231a}\,, linearizing \ref{P2} using Theorem \ref{thm:proj_p1p3_Rhode}  leads 
to \ref{p1231lin}\,. From there, now using Theorem \ref{thm:proj_p1p3_Rhode}
 to 
replace \ref{P1}+\ref{P3} with $AH=AA^\dagger$, we obtain \ref{p1proj13p2lin}.
Going back to  \ref{p1231lin}\,, and instead replacing \ref{P1}+\ref{P3} with 
\ref{PLS} using Theorem \ref{lem:P1P3eqPLS}, we obtain \ref{p1plsp2lin}\,.
The formulation \ref{pplsr1} comes immediately from  \ref{p1231a} via Theorem \ref{thm:propertyPLSr}\,.
 The formulation \ref{calP123}\,,  which can be found in \cite{PFLX_ORL}, is derived from
Theorem \ref{thm:structural}, setting $X:=D^{-1}$, $Y:=0$ and $W:=0$.
All six of these formulations can easily be reformulated as LPs. 
Previous computational work, using linear programming, was only for \ref{p1231a}
and \ref{calP123} (see \cite{PFLX_ORL}). 
Additionally, in \cite{ponte2024goodfastrowsparseahsymmetric}, there is an 
ADMM 
for \ref{calP123}\,.
Finally, the following sparsity bound, due to \cite{XFLPsiam},
which applies to the extreme points of the first five of these formulations (the ones with $H$ as the variable), 
is derived from analyzing
\ref{p1proj13p2lin} (as can be seen in the first line of its proof). 

\begin{theorem}[\protect{\cite[Prop. 3.1.2]{XFLPsiam}}]\label{prop:basicguarantee2}
Suppose that $A\in \mathbb{R}^{m\times n}$ has rank $r$.
 Extreme solutions of the standard linear-programming reformulation of
\ref{p1231a}  have at most $mr+(m-r)(n-r)$ nonzeros.
\end{theorem}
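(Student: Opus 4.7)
The plan is to prove the bound by analyzing \ref{p1proj13p2lin}, since all five $H$-variable formulations share the same feasible set (by Theorems \ref{prop:p1p3}, \ref{thm:rank}, \ref{thm:proj_p1p3_Rhode}, \ref{lem:P1P3eqPLS}, and \ref{thm:propertyPLSr}) and hence the same extreme points, and the constraints of \ref{p1proj13p2lin} decompose most cleanly in the SVD coordinates of Theorem \ref{thm:structural}.

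First, I would write down the standard linear-programming reformulation: decompose $H = H^+ - H^-$ with $H^+, H^- \ge 0$ (so $2nm$ nonnegative scalar variables), with linear equality constraints $A(H^+-H^-) = AA^\dagger$ and $(H^+-H^-)(I - AA^\dagger)=0$, and objective $\sum_{i,j}(H^+_{ij}+H^-_{ij})$. The only tool I need from LP theory is the standard fact that at every extreme point (basic feasible solution) of a standard-form LP, the number of positive variables is at most the rank of the equality-constraint system.

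Next, I would compute that rank by viewing the two matrix equations as a single linear operator on $H \in \mathbb{R}^{n\times m}$. Using the parameterization $H = V\Gamma U^\top$ with block partition $\Gamma = \begin{bmatrix} X & Y \\ Z & W \end{bmatrix}$, parts ($i$) and ($iii$) of Theorem \ref{thm:structural} give that $AH=AA^\dagger$ is equivalent to $X = D^{-1}$ and $Y = 0$; a short direct SVD calculation of $H(I - AA^\dagger)$ (noting $AA^\dagger = U_1 U_1^\top$) shows that $H(I-AA^\dagger)=0$ is equivalent to $Y = 0$ and $W = 0$. Hence the combined equalities fix exactly the blocks $X$, $Y$, and $W$, leaving $Z$ unconstrained, and the codimension of the feasible affine subspace in $\mathbb{R}^{n\times m}$ is
\[
r^2 + r(m-r) + (n-r)(m-r) = mr + (m-r)(n-r).
\]
This is precisely the rank of the equality-constraint system. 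Finally, I would apply the LP rank fact: at any extreme point, the number of positive entries of $(H^+, H^-)$ is at most $mr+(m-r)(n-r)$, and since $H_{ij}\neq 0$ forces at least one of $H^+_{ij}, H^-_{ij}$ to be positive, this gives $\|H\|_0 \leq mr+(m-r)(n-r)$.

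The main obstacle is not either of the LP steps but the rank computation: naively, $AH=AA^\dagger$ contributes $rm$ independent scalar equations and $H(I-AA^\dagger)=0$ contributes $n(m-r)$, summing to more than the claimed bound. The SVD block parameterization of Theorem \ref{thm:structural} resolves this by making visible that the two systems overlap exactly in the $r(m-r)$ equations fixing $Y$, so that the combined count is $rm + n(m-r) - r(m-r) = mr + (m-r)(n-r)$ and not larger.
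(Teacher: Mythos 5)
Your proposal is correct and follows essentially the same route the paper indicates for this result: reduce to the basic LP fact (Lemma~\ref{lemma:basic}) applied to the standard reformulation of \ref{p1proj13p2lin}, and compute the rank of the constraint system as the codimension of the feasible affine set via the SVD block parameterization of Theorem~\ref{thm:structural}, correctly accounting for the $r(m-r)$-dimensional overlap in the $Y=0$ equations. This mirrors the scheme the paper itself uses in the proof of Theorem~\ref{prop:basicguarantee3}, so there is nothing to add.
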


\subsection{ADMM for \texorpdfstring{\ref{p1231a}}{P123}}\label{sec:admm123}

We use an ADMM 
for \ref{p1231a}\,, applying it to the reformulation \ref{calP123} from \cite{ponte2024goodfastrowsparseahsymmetric}. Initially, by introducing a variable $H \in \mathbb{R}^{n \times m}$, we rewrite  \ref{calP123}  as
\begin{equation}\label{prob:admm_mat_1norm}
 \textstyle\min_{H \in \mathbb{R}^{n \times m}}\left\{
 \|H\|_{1} ~:~  H =  V_1D^{-1}U_1^\top + V_2ZU_1^\top
 \right\}.
\end{equation}
The augmented Lagrangian associated with \eqref{prob:admm_mat_1norm} is
\begin{align*}
\mathcal{L}_\rho(Z,H,\Lambda)\!
 &:=\! 
 \|H\|_{1} \!+\! \textstyle \frac{\rho}{2}\!\left\|V_1D^{-1}U_1^\top \!+\! V_2ZU_1^\top \!-\!H \!+\!\Lambda \right\|^2_F - 
  \frac{\rho}{2}\left\|\Lambda\right\|_F^2\,,
\end{align*}
where $\rho >0$ is the penalty parameter
and  $\Lambda $ is the scaled Lagrange multiplier; that is,  $\Theta := \rho\Lambda$ is the Lagrange multiplier associated to the constraint in \eqref{prob:admm_mat_1norm}. We apply ADMM
to \ref{calP123} by iteratively solving, for $k=0,1,\ldots,$ 
\begin{align}
    &Z^{k+1}:=\textstyle\argmin_Z ~ \mathcal{L}_\rho(Z,H^{k},\Lambda^k),\label{eq:Zmin1normsubpa}\\
    &H^{k+1}:=\textstyle\argmin_H ~ \mathcal{L}_\rho(Z^{k+1},H,\Lambda^k),\label{eq:Emin1normsubpa}\\
    &\textstyle\Lambda^{k+1}:=\Lambda^{k} + V_1D^{-1}U_1^\top + V_2 Z^{k+1}U_1^\top - H^{k+1}.\nonumber
\end{align}

\noindent {\bf{Update \texorpdfstring{$Z$}{Z}:}} 
Consider  subproblem \eqref{eq:Zmin1normsubpa}, more specifically,
\begin{align}\label{eq:Zmin1normsubprob}
Z^{k+1}:=\textstyle \argmin_{Z}  \left\| J - V_2 Z U_1^\top\right\|^2_F,
\end{align}
where $J:=  H^k -V_1D^{-1}U_1^\top -\Lambda^k$. We can easily  verify that the solution of \eqref{eq:Zmin1normsubprob} is given by $Z^{k+1} =  V_2^\top J U_1$\,.

\medskip

\noindent {\bf{Update \texorpdfstring{$H$}{H}:}} 
Consider  subproblem \eqref{eq:Emin1normsubpa}, more specifically, 
\begin{align}
H^{k+1}:=\textstyle\argmin_H\left\{ \|H\|_{1} + \frac{\rho}{2}\left\|H -Y  \right\|^2_F\right\},\label{eq:Emin1normsubprob}
\end{align}
where $Y := V_1D^{-1}U_1^\top + V_2Z^{k+1}U_1^\top  +\Lambda^k$.

From \cite[Section 4.4.3]
{boyd2011distributed}, we have that the solution of \eqref{eq:Emin1normsubprob} is given by
$H^{k+1}_{ij} = S_{1\!/\!\rho}(Y_{ij})$,
for $i = 1,\dots,n$ and $j = 1,\dots,m$, where  the element-wise soft thresholding operator $S$ is defined in \eqref{softthresholding}.

\medskip

\noindent
{\bf{Initialization:}}   
Set $\Lambda^0:=\hat\Theta/\rho$, where $\textstyle \hat \Theta :=\frac{1}{\|V_1 U_1^\top\|_{\infty}} V_1 U_1^\top$\,, and  $H^0 := V_1D^{-1}U_1^\top + \Lambda^0$.

\medskip

\noindent{\bf{Stopping criterion:}} 
We consider a stopping criterion  from \cite[Section 3.3.1]{boyd2011distributed}, and select an absolute tolerance $\epsilon^{\mbox{\scriptsize abs}}$ and a
relative tolerance $\epsilon^{\mbox{\scriptsize rel}}$. The algorithm stops at iteration $k$ if  
\begin{align*}
    &\|r^{k+1}\|_F \leq \epsilon^{\mbox{\scriptsize abs}}\sqrt{nm}  
    + \epsilon^{\mbox{\scriptsize rel}}\max\left\{\|H^{k+1}\|_F\,, \|V_2Z^{k+1}U_1^\top\|_F\,,\|V_1D^{-1}U_1^\top\|_F\right\}, \\
    &\|s^{k+1}\|_F \leq \epsilon^{\mbox{\scriptsize abs}}
    \sqrt{(n-r)r} +    
\epsilon^{\mbox{\scriptsize rel}}\rho \|V_2^\top\Lambda^{k+1}U_1\|_F\,,
\end{align*}
where $r^{k+1} := V_1D^{-1}U_1^\top + V_2Z^{k+1}U_1^\top - H^{k+1}$ is the primal residual,  and $s^{k+1}:= \rho V_2^\top(H^{k+1} - H^{k})U_1$\, is the dual residual.

\medskip

We note that, despite the tolerance used for the ADMM,
a feasible solution for \ref{p1231a} is constructed, considering the result of Theorem \ref{thm:structural}, and taking as output of ADMM
the matrix $V_1D^{-1}U_1^\top + V_2 Z^{k+1}$, where $Z^{k+1}$ was obtained in its last iteration.

%%%%%%%%%%%%%

\subsection{DRS for \texorpdfstring{\ref{p1231a}}{P123}}\label{sec:proj_p123}

Here, we consider applying DRS to \ref{p1231a}\,, more specifically, to its reformulation \ref{p1plsp2lin}\,. In this case, we have  $\mathcal{C} := \{H : A^\top AH = A^\top, HAA^\dagger = H\}$. The only differences between this application of DRS and the one detailed in \S\ref{sec:DRS13} are how to compute a projection onto the new affine set $\mathcal{C}$ and how to compute the primal and dual residuals, which we discuss in the following. 

\begin{proposition}\label{projP123}
    If $\mathcal{C} := \{H : A^\top AH = A^\top, HAA^\dagger = H\}$, for $A\in\mathbb{R}^{m\times n}$, then 
    \[
    \Pi_{\mathcal{C}}(V) = A^\dagger - A^\dagger AVAA^\dagger + VAA^\dagger.
    \]
\end{proposition}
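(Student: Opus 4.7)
The plan is to follow the Lagrangian-derivation template of Proposition~\ref{projP13}, now with two multipliers $\Lambda_1, \Lambda_2$ for the two defining constraints of $\mathcal{C}$. Writing
\[
\mathcal{L}(H,\Lambda_1,\Lambda_2) := \|H-V\|_F^2 + \langle \Lambda_1, A^\top AH - A^\top\rangle + \langle \Lambda_2, H - H A A^\dagger\rangle
\]
and setting $\nabla_H\mathcal{L}=0$ yields $H = V - \tfrac12\bigl(A^\top A\Lambda_1 + \Lambda_2(I - AA^\dagger)\bigr)$. Substituting this back into the two constraint equations produces an underdetermined linear system for $(\Lambda_1,\Lambda_2)$; as in the earlier proof, any particular solution suffices, and one can guess the ansatz (with $\Lambda_2$ essentially built from $A^\dagger$ on the left and $\Lambda_1$ chosen to fix the remaining residual) that causes the cross terms to telescope into the claimed closed form, following the same long chain of \ref{P1}--\ref{P4} reductions used in Proposition~\ref{projP13}.

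A cleaner alternative I would actually carry out is direct verification via orthogonality. First, one checks that $H^\ast := A^\dagger - A^\dagger AVAA^\dagger + VAA^\dagger$ lies in $\mathcal{C}$: for $A^\top A H^\ast = A^\top$, use $A^\top A A^\dagger = A^\top$ (a consequence of $AA^\dagger A = A$ and the symmetry of $AA^\dagger$), which makes the second and third contributions cancel against the first; for $H^\ast A A^\dagger = H^\ast$, use idempotency of $AA^\dagger$ together with $A^\dagger A A^\dagger = A^\dagger$. Second, since $\mathcal{C}$ is affine, it suffices to show that the residual $V - H^\ast$ is Frobenius-orthogonal to every tangent vector, i.e., every $K$ with $A^\top A K = 0$ and $K A A^\dagger = K$. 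The key observation making this work is that $A^\top A K = 0$ already forces $AK = 0$ (multiply on the left by $(A^\dagger)^\top$ and use $(A^\dagger)^\top A^\top = AA^\dagger$ together with $AA^\dagger A = A$). Combined with $K(I - AA^\dagger) = 0$, trace cyclicity kills each of the four pieces of $\langle V - H^\ast, K \rangle$ in a single line.

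The main obstacle, in either route, is pseudoinverse bookkeeping. In the Lagrangian approach the difficulty is producing \emph{one} explicit closed-form solution of the coupled matrix system for $(\Lambda_1,\Lambda_2)$, which requires several Moore--Penrose identities stacked together in the style of Proposition~\ref{projP13}. In the direct verification the delicate step is the implication $A^\top A K = 0 \Rightarrow AK = 0$; without it, the term $\langle A^\dagger AVAA^\dagger, K\rangle$ does not obviously vanish, whereas every other summand collapses immediately via the symmetry of $AA^\dagger$ and $A^\dagger A$ and trace cyclicity. Either way, the final line closes with the observation (as in Proposition~\ref{projP13}) that the candidate is feasible and orthogonal to the tangent space, hence equal to $\Pi_{\mathcal{C}}(V)$.
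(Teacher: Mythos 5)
Your second route (the one you say you would actually carry out) is correct and is genuinely different from the paper's argument. The paper proves Proposition~\ref{projP123} the same way as Proposition~\ref{projP13}: it forms the Lagrangian with two multipliers $\Lambda,\Gamma$, sets $\nabla_H\mathcal{L}=0$, and then grinds through a long chain of Moore--Penrose identities to exhibit explicit multipliers $\hat\Lambda = 2A^\dagger({A^\dagger}^\top VA - {A^\dagger}^\top)A^\dagger$ and $\hat\Gamma = 2(VAA^\dagger - V)$ before substituting back; your first paragraph correctly anticipates this but only gestures at "guessing the ansatz," so on its own it would not constitute a proof. Your direct-verification route, by contrast, is complete in outline and logically tight: since $\mathcal{C}$ is affine, feasibility of $H^\ast$ plus Frobenius-orthogonality of $V-H^\ast$ to the direction space $\{K : A^\top AK=0,\ KAA^\dagger=K\}$ characterizes the projection, and your key lemma $A^\top AK=0\Rightarrow AK=0$ is right (it also follows from $\|AK\|_F^2=\tr(K^\top A^\top AK)=0$). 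Two small wording slips: in the feasibility check the second and third contributions of $A^\top AH^\ast$ cancel against \emph{each other} (both equal $\pm A^\top AVAA^\dagger$), not against the first; and in the orthogonality step the four summands do not each vanish individually --- $\langle V,K\rangle$ cancels against $\langle VAA^\dagger,K\rangle$ as a pair via $K=KAA^\dagger$ and symmetry of $AA^\dagger$, while $\langle A^\dagger,K\rangle$ and $\langle A^\dagger AVAA^\dagger,K\rangle$ each vanish via $AK=0$. As for what each approach buys: your verification is shorter and avoids any appeal to KKT conditions, but it presupposes the closed form; the paper's Lagrangian derivation is constructive and, importantly, its by-products $\hat\Lambda,\hat\Gamma$ are reused immediately afterward to build the dual residual $r_d^k$ for the DRS algorithm, which your route does not supply.
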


\begin{proof}
    Let $V \!\in\! \mathbb{R}^{n \times m}$ be a given matrix. We want to find the closed-form solution~of
    \begin{equation}\label{prob:proj_p123}
        \textstyle\argmin_H\{\|V-H\|_F^2 : A^\top AH = A^\top, HAA^\dagger = H\}. \tag{$\text{Proj}_{123}$}
    \end{equation}
  Using Lagrange multipliers $\Lambda,\Psi\in\mathbb{R}^{n\times m}$,
   the Lagrangian of 
 \ref{prob:proj_p123} is given by
    \begin{equation*}
        \mathcal{L}(H, \Lambda, \Psi) := \|V-H\|_F^2 + \langle \Lambda, A^\top AH - A^\top \rangle + \langle \Psi, HAA^\dagger - H \rangle.
    \end{equation*}
    As \ref{prob:proj_p123} is convex, we have that its primal-dual solution $(H, \Lambda, \Psi)$ satisfies $\nabla_H \mathcal{L}(H, \Lambda, \Psi)=0$, or equivalently 
    \begin{equation}\label{grad123zero}
     H  = V - \textstyle\frac{1}{2}(A^\top A\Lambda + \Psi AA^\dagger - \Psi).
     \end{equation}
     From \eqref{grad123zero} and  $A^\top AH = A^\top$ we have 
    \begin{equation*}
        \begin{aligned}
            A^\top & = A^\top AV - \textstyle\frac{1}{2}A^\top A(A^\top A\Lambda + \Psi AA^\dagger - \Psi) & \Rightarrow \\
            A^\top A & = A^\top AVA - \textstyle\frac{1}{2}A^\top AA^\top A\Lambda A & \Leftrightarrow \\
            A^\top AA^\top A\Lambda A & = 2(A^\top AVA - A^\top A) & \Rightarrow \\
            {A^\dagger}^\top A^\top AA^\top A\Lambda A & = 2{A^\dagger}^\top(A^\top AVA - A^\top A) & \Leftrightarrow \\
            AA^\top A\Lambda A & = 2(AVA - A) & \Rightarrow \\
            A^\dagger AA^\top A\Lambda A & = 2(A^\dagger AVA - A^\dagger A) & \Leftrightarrow \\
            A^\top A\Lambda A & = 2(A^\dagger AVA - A^\dagger A) & \Rightarrow \\
            {A^\dagger}^\top A^\top A\Lambda A & = 2{A^\dagger}^\top(A^\dagger AVA - A^\dagger A) & \Leftrightarrow \\
            A\Lambda A & = 2({A^\dagger}^\top VA - {A^\dagger}^\top) & \Rightarrow \\
            A^\dagger A\Lambda AA^\dagger & = 2A^\dagger({A^\dagger}^\top VA - {A^\dagger}^\top)A^\dagger & \Leftrightarrow \\
            (AA^\dagger) \otimes (A^\dagger A)\vvec(\Lambda) & = 2\vvec(A^\dagger({A^\dagger}^\top VA - {A^\dagger}^\top)A^\dagger). 
        \end{aligned}
    \end{equation*}
As $(AA^\dagger) \otimes (A^\dagger A)$ is an orthogonal projection,  $\hat \Lambda := 2A^\dagger({A^\dagger}^\top VA - {A^\dagger}^\top)A^\dagger$ solves the last equation above. 
We note that $\hat \Lambda = \hat \Lambda AA^\dagger$. 

Similarly, 
 from \eqref{grad123zero} and  $HAA^\dagger = H$, and now considering $\Lambda:=\hat\Lambda$, we have 
    \begin{equation*}
        \begin{aligned}
            H & = VAA^\dagger -\textstyle\frac{1}{2}A^\top A\Lambda & \Leftrightarrow \\
            V -\textstyle\frac{1}{2}(A^\top A\Lambda + \Psi AA^\dagger - \Psi) & = VAA^\dagger -\textstyle\frac{1}{2}A^\top A\Lambda & \Leftrightarrow \\
            VAA^\dagger - V & = -\textstyle\frac{1}{2}(\Psi AA^\dagger - \Psi) & \Leftrightarrow \\
            \Psi AA^\dagger - \Psi & = 2(V - VAA^\dagger) & \Leftrightarrow \\
            ((AA^\dagger) \otimes I_n - I_m \otimes I_n)\vvec(\Psi) & = 2\vvec(V - VAA^\dagger) & \Leftrightarrow \\
            (I_m \otimes I_n - (AA^\dagger) \otimes I_n)\vvec(\Psi) & = 2\vvec(VAA^\dagger - V).
        \end{aligned}
    \end{equation*}
    As $I_m \otimes I_n - (AA^\dagger) \otimes I_n$ is an orthogonal projection, $\hat\Psi = 2(VAA^\dagger - V)$ solves the last equation above.

Finally, substituting $\hat\Lambda$ and $\hat\Psi$ into \eqref{grad123zero}, we obtain 
    \begin{equation*}
        \begin{aligned}
            H& = V - A^\top A(A^\dagger{A^\dagger}^\top VAA^\dagger - A^\dagger{A^\dagger}^\top A^\dagger) - (VAA^\dagger - V)AA^\dagger + (VAA^\dagger - V) \\
            & = V - A^\top {A^\dagger}^\top VAA^\dagger + A^\top{A^\dagger}^\top A^\dagger + VAA^\dagger - V \\
            & = A^\dagger - A^\dagger AVAA^\dagger + VAA^\dagger.
        \end{aligned}
    \end{equation*}
     Furthermore, we can easily verify that the $H$ obtained is feasible to \ref{prob:proj_p123}\,. 
    \qed
\end{proof}

From Theorems \ref{lem:P1P3eqPLS}, \ref{thm:proj_p1p3_Rhode} and \ref{thm:propertyPLSr}, we see that the constraints in \ref{p1plsp2lin} are equivalent to  \ref{PLSr}\,.
So, following \cite{Fu_2020}, we define the primal residual  at iteration $k$  of the DRS algorithm as
\begin{equation*}\label{rp:p123}
    r_p^{k} := A^\top {H^{k+1/2}}^\top A^\top + H^{k+1/2}AA^\dagger - A^\top - H^{k+1/2}.
\end{equation*}

To derive the dual residual at iteration $k$, we first consider the Lagrangian of  \ref{p1plsp2lin} and its subdifferential with respect to $H$:
\begin{equation*}
    \mathcal{L}(H, \Lambda, \Psi) := \|H\|_1 + \langle \Lambda, A^\top AH - A^\top \rangle + \langle \Psi, HAA^\dagger - H \rangle,
\end{equation*}
\begin{equation*}\label{prob_123:subdifflagrangian}
    \partial_H \mathcal{L}(H, \Lambda, \Psi) = \partial \|H\|_1 + A^\top A\Lambda + \Psi AA^\dagger - \Psi.
\end{equation*}

Then, following the same steps of \S\ref{sec:DRS13}, 
we choose $\Lambda, \Psi$ that solve the least-squares problem
\begin{equation}\label{prob_123:dual_variable_ls}
    \textstyle\argmin_{\Lambda, \Psi}\|A^\top A\Lambda + \Psi AA^\dagger - \Psi - \frac{1}{\lambda}(H^{k+1/2}-V^k)\|_F^2\,.
\end{equation}

\begin{proposition}
    The solution of \eqref{prob_123:dual_variable_ls} is $\hat\Lambda := \textstyle\frac{1}{\lambda}A^\dagger{A^\dagger}^\top (H^{k+1/2}-V^k)AA^\dagger$ and $\hat\Psi := \frac{1}{\lambda}(H^{k+1/2}-V^k)AA^\dagger - \frac{1}{\lambda}(H^{k+1/2}-V^k)$.
\end{proposition}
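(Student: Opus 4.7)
The plan is to treat \eqref{prob_123:dual_variable_ls} as an unconstrained convex quadratic in the pair $(\Lambda,\Gamma)$ and verify that $(\hat\Lambda,\hat\Gamma)$ satisfies its first-order optimality conditions, which for a convex quadratic are both necessary and sufficient. Setting $M:=\tfrac{1}{\lambda}(H^{k+1/2}-V^k)$ and writing the residual $R(\Lambda,\Gamma):=A^\top A\Lambda-\Gamma(I-AA^\dagger)-M$, the objective is $\|R\|_F^2$. Differentiating with respect to $\Lambda$ and $\Gamma$ and setting the gradients to zero yields the two normal equations $A^\top A\,R=0$ and $R(I-AA^\dagger)=0$. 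Using the M--P identity $(A^\top A)^\dagger A^\top A=A^\dagger A$, the first reduces to $AR=0$ (its columns lie in $\mathcal{K}(A^\top A)=\mathcal{K}(A)$); the second is equivalent to $R=R\,AA^\dagger$. These are the two conditions I would check for the proposed $(\hat\Lambda,\hat\Gamma)$.

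Next, I would substitute $\hat\Lambda$ and $\hat\Gamma$ and compute $\hat R:=R(\hat\Lambda,\hat\Gamma)$ in closed form. For the $\Lambda$-term, the identity $A^\top A A^\dagger=A^\top$ (a consequence of \ref{P1}) together with $A^\top(A^\dagger)^\top=(A^\dagger A)^\top=A^\dagger A$ gives $A^\top A\hat\Lambda=A^\dagger A\,M\,AA^\dagger$. For the $\Gamma$-term, idempotency of $I-AA^\dagger$ yields $\hat\Gamma(I-AA^\dagger)=-M(I-AA^\dagger)=-M+M AA^\dagger$. Combining these in the definition of $R$, the stand-alone $M$ and $M AA^\dagger$ contributions cancel precisely, leaving
\begin{equation*}
\hat R=A^\dagger A\,M\,AA^\dagger-M AA^\dagger=-(I-A^\dagger A)\,M\,AA^\dagger.
\end{equation*}

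Finally, I would verify the two simplified optimality conditions directly on this expression. Since $AA^\dagger A=A$, we have $A\hat R=-(A-AA^\dagger A)\,M\,AA^\dagger=0$; and since $AA^\dagger$ is idempotent, $\hat R(I-AA^\dagger)=-(I-A^\dagger A)\,M\,(AA^\dagger-AA^\dagger)=0$. Both normal equations hold, so $(\hat\Lambda,\hat\Gamma)$ is a global minimizer of \eqref{prob_123:dual_variable_ls}.

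I do not anticipate a genuine obstacle: the problem is a convex quadratic, and the entire argument is bookkeeping with M--P identities. The one point worth stating explicitly is why the normal equation $A^\top A R=0$ collapses to $AR=0$ (ultimately because $A^\top$ and $A^\top A$ have the same kernel); this is the same type of simplification repeatedly exploited in the proof of Proposition~\ref{projP123}, and it is precisely what makes the pair $(\hat\Lambda,\hat\Gamma)$ of the stated form work.
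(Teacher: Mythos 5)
Your proof is correct. The paper itself states this proposition without proof, so there is nothing to compare against line by line; your verification --- writing the normal equations $A^\top A R=0$ (equivalently $AR=0$, since $\mathcal{K}(A^\top A)=\mathcal{K}(A)$) and $R(I-AA^\dagger)=0$, computing $\hat R=-(I-A^\dagger A)M AA^\dagger$ for the proposed pair, and checking both conditions via $AA^\dagger A=A$ and idempotency of $AA^\dagger$ --- is complete and sound, and it follows exactly the pattern the paper uses for the analogous unproved claim after \eqref{prob_13:dual_variable_ls2} and in Proposition~\ref{projP123}. Convexity of the quadratic makes stationarity sufficient, and since the paper's convention is that $\argmin$ denotes \emph{any} solution, exhibiting one global minimizer is all that is required.
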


\begin{proof}
    Let $L(\Lambda, \Psi) := \|A^\top A\Lambda + \Psi AA^\dagger - \Psi - B\|_F^2$\,, where $B := \frac{1}{\lambda}(H^{k+1/2}-V^k)$. As $L$  is a convex function, its minimum is attained at $(\hat\Lambda, \hat\Psi)$ that satisfies $\nabla_\Lambda L(\hat\Lambda, \hat\Psi) = 0$ and $\nabla_\Psi L(\hat\Lambda, \hat\Psi) = 0$. Computing first the gradient with respect to $\Lambda$, we have
    \begin{equation*} 
        \nabla_\Lambda L(\Lambda, \Psi) = 2A^\top A(A^\top A\Lambda + \Psi AA^\dagger - \Psi - B).
    \end{equation*}

    From $\nabla_\Lambda L(\Lambda, \Psi) = 0$, we have

    \begin{equation*}
        \begin{aligned}
            2A^\top A(A^\top A\Lambda + \Psi AA^\dagger - \Psi - B) & = 0 & \Rightarrow \\
            2A^\top A(A^\top A\Lambda A + \Psi AA^\dagger A - \Psi A - BA) & = 0 & \Leftrightarrow \\
            2A^\top AA^\top A\Lambda A & = 2A^\top ABA & \Rightarrow \\
            {A^\dagger}^\top A^\top AA^\top A\Lambda A & = {A^\dagger}^\top A^\top ABA & \Leftrightarrow \\
            AA^\top A\Lambda A & = ABA & \Rightarrow \\
            A^\dagger AA^\top A\Lambda A & = A^\dagger ABA & \Leftrightarrow \\
            A^\top A\Lambda A & = A^\dagger ABA & \Rightarrow \\
            {A^\dagger}^\top A^\top A\Lambda A & = {A^\dagger}^\top A^\dagger ABA & \Leftrightarrow \\
            A\Lambda A & = {A^\dagger}^\top BA & \Rightarrow \\
            A^\dagger A\Lambda AA^\dagger & = A^\dagger{A^\dagger}^\top BAA^\dagger & \Leftrightarrow \\
            (AA^\dagger) \otimes (A^\dagger A) \vvec(\Lambda) & = \vvec(A^\dagger{A^\dagger}^\top BAA^\dagger).
        \end{aligned}
    \end{equation*}

    As $(AA^\dagger) \otimes (A^\dagger A)$ is an orthogonal projection,  $\Lambda = A^\dagger{A^\dagger}^\top BAA^\dagger$ solves this equation.

    Computing the gradient of $L$ with respect to $\Psi$, we have
\begin{equation*} 
        \nabla_\Psi L(\Lambda, \Psi) = 2(A^\top A\Lambda + \Psi AA^\dagger - \Psi - B)AA^\dagger - 2(A^\top A\Lambda + \Psi AA^\dagger - \Psi - B).
    \end{equation*}

    From $\nabla_\Psi L(\Lambda, \Psi) = 0$, we have

    \begin{equation*}
        \begin{aligned}
            (A^\top A\Lambda + \Psi AA^\dagger - \Psi - B)AA^\dagger & = A^\top A\Lambda + \Psi AA^\dagger - \Psi - B & \Leftrightarrow \\
            A^\top A\Lambda AA^\dagger - BAA^\dagger & = A^\top A\Lambda + \Psi AA^\dagger - \Psi - B & \Leftrightarrow \\
            A^\top AA^\dagger{A^\dagger}^\top BAA^\dagger AA^\dagger - BAA^\dagger & = A^\top AA^\dagger{A^\dagger}^\top BAA^\dagger + \Psi AA^\dagger - \Psi - B & \Leftrightarrow \\
            A^\top {A^\dagger}^\top BAA^\dagger - BAA^\dagger & = A^\top {A^\dagger}^\top  BAA^\dagger + \Psi AA^\dagger - \Psi - B & \Leftrightarrow \\
            \Psi AA^\dagger - \Psi & = B - BAA^\dagger & \Leftrightarrow \\
            ((AA^\dagger) \otimes I_n - I_m \otimes I_n)\vvec(\Psi) & = \vvec(B - BAA^\dagger) & \Leftrightarrow \\
            (I_m \otimes I_n - (AA^\dagger) \otimes I_n)\vvec(\Psi) & = \vvec(BAA^\dagger - B).
        \end{aligned}
    \end{equation*}

    As $I_m \otimes I_n - (AA^\dagger) \otimes I_n$ is an orthogonal projection,  $\Psi = BAA^\dagger - B$ solves this equation.
    
Replacing the value of $B$ in both expressions found, we have  $\Lambda = \frac{1}{\lambda}A^\dagger{A^\dagger}^\top (V^k-H^{k+1/2})AA^\dagger$ and $\Psi = \frac{1}{\lambda}(V^k-H^{k+1/2})AA^\dagger - \frac{1}{\lambda}(V^k-H^{k+1/2})$.
    \qed\end{proof}

Finally, plugging $\hat\Lambda$ and $\hat\Psi$ into the objective of \eqref{prob_123:dual_variable_ls}, we obtain the following expression for the dual residual for \ref{p1plsp2lin} at iteration $k$ of the DRS algorithm:
\begin{equation*}
    \begin{aligned}
        r_d^{k}  &:=  \textstyle \frac{1}{\lambda}(V^k-H^{k+1/2}) + \frac{1}{\lambda}A^\top AA^\dagger{A^\dagger}^\top (H^{k+1/2}-V^k)AA^\dagger  \\ 
        & \quad \qquad \textstyle + \frac{1}{\lambda}((H^{k+1/2}-V^k)AA^\dagger - (H^{k+1/2}-V^k)) AA^\dagger \\
        &\quad\qquad - \textstyle\frac{1}{\lambda}((H^{k+1/2}-V^k)AA^\dagger - (H^{k+1/2}-V^k)) \\
        & ~=  \textstyle\frac{2}{\lambda}(V^k-H^{k+1/2}) + \frac{1}{\lambda}A^\top {A^\dagger}^\top (H^{k+1/2}-V^k)AA^\dagger + \frac{1}{\lambda}(V^k-H^{k+1/2})AA^\dagger  \\
        &  ~= \textstyle\frac{2}{\lambda}(V^k-H^{k+1/2}) + \frac{1}{\lambda}A^\dagger A (H^{k+1/2}-V^k)AA^\dagger - \frac{1}{\lambda}(H^{k+1/2}-V^k)AA^\dagger. 
    \end{aligned} 
\end{equation*}

%%%%%%%%%%%%%%%%%

\subsection{Numerical experiments}\label{sec:numexp123}

In this section, we compare our different  methods  for computing sparse universal least-squares solvers with minimum rank. We use the same set of test instances used in \S\ref{sec:numexp13}, but here, in addition to using  \texttt{Gurobi}, DRS$_{\mbox{\tiny res}}$, and DRS$_{\mbox{\tiny fp}}$\,, we use the ADMM 
(and its implementation) from \cite{ponte2024goodfastrowsparseahsymmetric} (see \S\ref{sec:admm123}). For the DRS algorithms, the projection $\Pi_{\mathcal{C}}(\cdot)$ and the primal and dual residuals are specified in \S\ref{sec:proj_p123}. We set the parameter $\lambda:= 10^{-2}$ in the DRS algorithms and the parameter $\rho:=3$ in ADMM.

The results for \texttt{Gurobi} were obtained by solving \ref{calP123}\,; this formulation led to the best performance of \texttt{Gurobi} in the preliminary experiments reported in the Appendix. As for \ref{p131}\,, we observed a large impact of the formulation of problem \ref{p1231a} on the runtime of \texttt{Gurobi}.

Furthermore, as for \ref{p131}\,, DRS$_{\mbox{\tiny fp}}$ was the best-performing method for \ref{p1231a}\,, converging faster than DRS$_{\mbox{\tiny res}}$ and ADMM for solutions with similar 1-norms. Thus, we report here again the results of an experiment in which 
we first solved all instances with DRS$_{\mbox{\tiny fp}}$ and saved $\|H_{\mbox{\tiny fp}}\|_1$\,, the 1-norm of the solutions obtained. Then, we ran DRS$_{\mbox{\tiny res}}$ and ADMM, stopping the algorithm when 
the obtained solution $H$ satisfied $(\|H\|_1 - \|H_{\mbox{\tiny fp}}\|_1)/ \|H_{\mbox{\tiny fp}}\|_1 \le 10^{-5}$.

In Table \ref{tab:stats_p123}\,, we compare the results for \texttt{Gurobi}, DRS$_{\mbox{\tiny res}}$ and ADMM with the results for DRS$_{\mbox{\tiny fp}}$\,.  For DRS$_{\mbox{\tiny fp}}$\,, we show the 0-norm and 1-norm of the solution of each instance. For \texttt{Gurobi}, DRS$_{\mbox{\tiny res}}$ and ADMM, we consider the solution $H$ obtained by each algorithm and compare it with the solution $H_{\mbox{\tiny fp}}$ obtained by DRS$_{\mbox{\tiny fp}}$\,, showing the factor $(\|H\| - \|H_{\mbox{\tiny fp}}\|)/\|H_{\mbox{\tiny fp}}\|$ for the 0-norm. We also show the runtime (in seconds) and the 1-norm factor for \texttt{Gurobi}; for the other two algorithms we show 1-norm factor in the column `time', only if the time limit is reached, otherwise we present the runtime of algorithms (in seconds). 

\begin{table}[ht!]
    \caption{Comparison against DRS$_{\mbox{\tiny fp}}$ for \ref{p1231a}  ($n=0.5m$, $r=0.25m$)}
    \label{tab:stats_p123}
\begin{footnotesize}
    \centering  
        \begin{tabular}{r|rrr|rrr|rr|rr}      
         \multicolumn{1}{c}{} & \multicolumn{3}{c|}{\texttt{Gurobi}} & \multicolumn{3}{c|}{DRS$_{\mbox{\tiny fp}}$} & \multicolumn{2}{c|}{DRS$_{\mbox{\tiny res}}$} & \multicolumn{2}{c}{ADMM} \\
        \hline       
        \multicolumn{1}{c|}{$m$} & \multicolumn{1}{c}{$\jatop{\vphantom{\mbox{$X^{X^X}$}} \|H\|_0}{\mbox{factor}}$} & \multicolumn{1}{c}{$\jatop{\|H\|_1}{\mbox{factor}}$} & \multicolumn{1}{c|}{time} & \multicolumn{1}{c}{$\|H\|_0$} & \multicolumn{1}{c}{$\|H\|_1$} & \multicolumn{1}{c|}{time} & \multicolumn{1}{c}{$\jatop{\|H\|_0}{\mbox{factor}}$} & \multicolumn{1}{c|}{time} & \multicolumn{1}{c}{$\jatop{\|H\|_0}{\mbox{factor}}$} & \multicolumn{1}{c}
        {
        $\katop{\mbox{time}}
        {\left(\!{
        \jatop{\|H\|_1}{\mbox{factor}}}\!\right)
        }
        $
        }\\[4pt]
        \hline
           \vphantom{\mbox{$X^{X^X}$}} 
        100  & -6.13e-2 & -3.87e-5 & 9.11 & 3978 & 194.28 & 0.16 & 7.29e-3 & 0.16 & 9.30e-3 & 0.19 \\
        200  & -5.49e-2 & -5.20e-5 & 154.68 & 15811 & 539.73 & 0.86 & 7.46e-3 & 2.81 & 2.03e-2 & 2.62 \\
        300  & -5.01e-2 & -6.74e-5 & 3845.09 & 35540 & 868.25 & 1.77 & 7.12e-3 & 6.50 & 2.18e-2 & 8.30 \\
        400  & \multicolumn{1}{c}{$*$} & \multicolumn{1}{c}{$*$} & \multicolumn{1}{c|}{$*$} & 67754 & 1340.00 & 2.88 & 5.02e-3 & 11.07 & 2.18e-2 & 14.49 \\
        500  & \multicolumn{1}{c}{$*$} & \multicolumn{1}{c}{$*$} & \multicolumn{1}{c|}{$*$} & 106042 & 1870.79 & 4.33 & 6.12e-3 & 20.95 & 2.20e-2 & 22.66 \\
        \hline
           \vphantom{\mbox{$X^{X^X}$}} 
        1000 & \multicolumn{1}{c}{$*$} & \multicolumn{1}{c}{$*$} & \multicolumn{1}{c|}{$*$} & 426933 & 4887.30 & 22.01 & 4.54e-3 & 72.95 & 2.22e-2 & 199.64 \\
        2000 & \multicolumn{1}{c}{$*$} & \multicolumn{1}{c}{$*$} & \multicolumn{1}{c|}{$*$} & 1670089 & 11783.44 & 99.06 & 3.61e-3 & 309.08 & 2.46e-2 & 1106.59 \\
        3000 & \multicolumn{1}{c}{$*$} & \multicolumn{1}{c}{$*$} & \multicolumn{1}{c|}{$*$} & 3766656 & 20462.63 & 202.50& 3.02e-3 & 586.85 & 2.51e-2 & 2482.79 \\
        4000 & \multicolumn{1}{c}{$*$} & \multicolumn{1}{c}{$*$} & \multicolumn{1}{c|}{$*$} & 6597660 & 30032.65 & 399.07 & 2.53e-3 & 1050.99 & 2.41e-2 & 4739.21 \\
        5000 & \multicolumn{1}{c}{$*$} & \multicolumn{1}{c}{$*$} & \multicolumn{1}{c|}{$*$} & 10291141 & 40718.30 & 538.69 & 2.19e-3 & 1471.85 & 2.36e-2 & (1.52e-5) \\
    \end{tabular}
    \end{footnotesize}
\end{table}

Comparing Table \ref{tab:stats_p123} to Table \ref{tab:stats_p13}, we see that solving \ref{p1231a} is more time consuming than solving \ref{p131} for both \texttt{Gurobi} and DRS, but the comparison analysis among the different methods to solve both problems is very similar, except for the fact that we added ADMM for \ref{p1231a}\,, which had worse performance than both DRS algorithms. Once more, we see that \texttt{Gurobi} is not competitive in solving the problem, and that computing the residuals is a big burden for  DRS$_{\mbox{\tiny res}}$\,, which obtains worse 0-norm solutions and takes much longer to converge than DRS$_{\mbox{\tiny fp}}$\,. ADMM took longer than both DRS methods to converge, did not converge in the time limit for the largest instance, and obtained solutions with worse 0-norm than both DRS algorithms for all instances.

As we also observed for \ref{p131}\,, solutions with smaller 1-norms are sparser, showing again the effective use of the 1-norm minimization to induce sparsity. 

In Table \ref{tab:stats2_p123} we compare the solution $H_{\mbox{\tiny fp}}$ obtained with DRS$_{\mbox{\tiny fp}}$ with the Moore-Penrose pseudoinverse of $A$, showing the factors $(\|H_{\mbox{\tiny fp}}\|- \|A^\dagger\|)/\|A^\dagger\|$ for the 0-norm and 1-norm. For \texttt{Gurobi} and  DRS$_{\mbox{\tiny fp}}$\,, we also  show the ratio between the 0-norm of the solution obtained and the upper bound $\beta_{123}:=mr + (m-r)(n-r)$ on the number of nonzero elements of extreme solutions of the standard linear-programming reformulation of \ref{p1231a}
(see Theorem \ref{prop:basicguarantee2}). 

\begin{table}[ht!]
    \centering 
    \caption{More statistics for DRS$_{\mbox{\tiny fp}}$ for \ref{p1231a} ($n=0.5m$, $r=0.25m$)}
    \label{tab:stats2_p123}
\begin{footnotesize} 
        \begin{tabular}{r|r|rrr}        
         \multicolumn{1}{c}{} & \multicolumn{1}{c|}{\texttt{Gurobi}} & \multicolumn{3}{c}{DRS$_{\mbox{\tiny fp}}$} \\ 
        \hline    
        \multicolumn{1}{c|}{$m$} & $\frac{\vphantom{\mbox{$X^{X^X}$}} \|H\|_0}{\beta_{123}}$ & $\frac{\|H\|_0}{\beta_{123}}$ & $\jatop{\|H\|_0}{\mbox{factor}}$ & $\jatop{\|H\|_1}{\mbox{factor}}$  \\ [4pt]
        \hline
\vphantom{\mbox{$X^{X^X}$}} 
        100 & 0.853 & 0.909 & -0.164 & -0.129  \\
        200 & 0.854 & 0.903 & -0.160 & -0.114  \\
        300 & 0.857 & 0.903 & -0.157 & -0.134  \\
        400 & \multicolumn{1}{c|}{$*$} & 0.968 & -0.101 & -0.123  \\
        500 & \multicolumn{1}{c|}{$*$} & 0.970 & -0.096 & -0.119  \\
        \hline
           \vphantom{\mbox{$X^{X^X}$}} 
        1000 & \multicolumn{1}{c|}{$*$} & 0.976 & -0.082 & -0.124 \\
        2000 & \multicolumn{1}{c|}{$*$} & 0.954 & -0.092 & -0.142  \\
        3000 & \multicolumn{1}{c|}{$*$} & 0.957 & -0.078 & -0.143  \\
        4000 & \multicolumn{1}{c|}{$*$} & 0.943 & -0.087 & -0.142  \\
        5000 & \multicolumn{1}{c|}{$*$} & 0.941 & -0.082 & -0.146 \\
    \end{tabular}
    \end{footnotesize}
\end{table}

Comparing  Table \ref{tab:stats2_p123} to Table \ref{tab:stats2_p13}, we first notice that the solutions obtained by \texttt{Gurobi} and DRS$_{\mbox{\tiny fp}}$ have a number of nonzero elements somewhat farther from the upper bound $\beta_{123}$\,, indicating that the upper bound may not be sharp; see additional comments in \S\ref{sec:outlook}. We see that when imposing minimum rank on the ah-symmetric generalized inverses, the 0-norm factors comparing the sparsity of the solutions to the sparsity of $A^\dagger$ increases a lot, showing the important trade-off between low rank and sparse ah-symmetric generalized inverses. On the other hand, the comparisons between 1-norms is not as affected by the low-rank restriction, and we still see decreases of about 13\% in the 1-norm of the solutions when compared to the 1-norm of $A^\dagger$.

In Figure \ref{fig:p123}, we plot the pairs $(\|H\|,\mbox{time (in seconds)})$ every 50 iterations for DRS$_{\mbox{\tiny fp}}$ and ADMM, for both 1-norm and 0-norm, when applied to our largest instance ($m=5000$). In this final experiment for \ref{p1231a}\,, we let both algorithms run for $7200$ seconds to better compare their convergence behavior. We only show results up to $2000$ seconds in the plots, because after this point the norms of the solutions do not change significantly. We clearly see the faster convergence of DRS$_{\mbox{\tiny fp}}$ compared to ADMM. It is interesting to note that after the long run of $7200$ seconds, the 1-norm of the DRS$_{\mbox{\tiny fp}}$ solution decreased by only a factor of $10^{-4}$ compared to the solution reported in Table~\ref{tab:stats_p123}. On the other hand, a solution with the 0-norm decreased by a more significant factor of $10^{-2}$ was obtained after $1200$ seconds. Although we observe a small increase in the 0-norm after this point, we see that the 1-norm was a good surrogate for inducing sparsity.

\begin{figure}[!ht]
    \centering
    \includegraphics[width=0.496\linewidth]{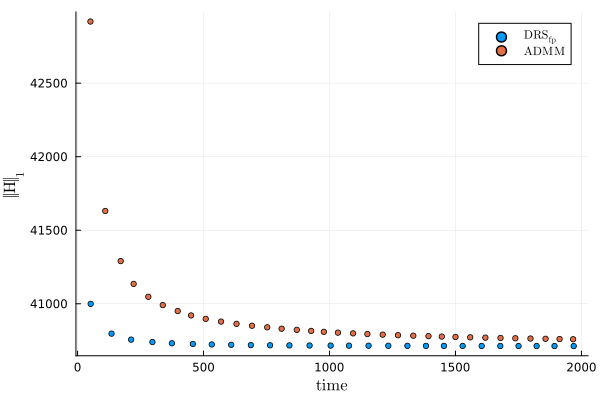}
     \includegraphics[width=0.496\linewidth]{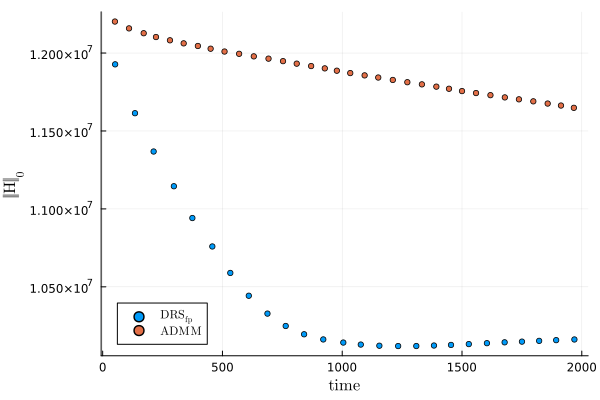}
    \caption{Iterations for the largest instance ($m=5000$) of \ref{p1231a} (stride of 50)}
    \label{fig:p123}
\end{figure}

\section{Sparse simultaneous universal solvers for least-squares and minimum 2-norm }\label{sec:ah-ha}

\subsection{Sparse universal minimum 2-norm solvers
}\label{sec:MN}

A \emph{universal minimum 2-norm solver} for a matrix $A\in\mathbb{R}^{m \times n}$ is a
matrix $H\in\mathbb{R}^{n \times m}$ such that 
$\hat{\theta}:=Hb$ is a solution to the minimum 2-norm problem 
\begin{align}
\min \left\{\|\theta\|^2_2 : A\theta=b \right\}, \label{m2n}\tag{M2N}
\end{align}
for every $b\in\mathcal{R}(A)$.

It is a somewhat-involved calculus exercise to verify the following well-known result.
\begin{theorem}[see e.g. 
\protect{\cite[Thm. 2 of Chap. 3]{BenIsrael1974}}]\label{prop:p1p4}
$H$ satisfies {\rm\ref{P1}+\ref{P4}} (i.e., $H$ is a ha-symmetric generalized inverse of $A$) if and only if $H$ is a universal minimum 2-norm solver for $A$.
\end{theorem}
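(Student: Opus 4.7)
The plan is to prove both directions using the well-known characterization that, for $b\in\mathcal{R}(A)$, the minimum 2-norm solution of $A\theta=b$ is the unique element of the affine set $\{\theta: A\theta=b\}$ lying in $\mathcal{R}(A^\top)=\mathcal{K}(A)^\perp$.

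For the easy direction ($\Leftarrow$), I would assume \ref{P1}+\ref{P4} and fix $b=Ax\in\mathcal{R}(A)$. Feasibility of $\hat\theta:=Hb$ is immediate from \ref{P1}: $A(Hb)=AHAx=Ax=b$. For optimality, I rewrite $Hb=HAx=(HA)^\top x=A^\top H^\top x\in\mathcal{R}(A^\top)$, where the middle equality uses \ref{P4}. So $\hat\theta$ is feasible and lies in $\mathcal{R}(A^\top)$, hence is the (unique) minimum 2-norm solution.

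For the harder direction ($\Rightarrow$), I would first extract \ref{P1}: feasibility of $\hat\theta:=Hb$ for all $b\in\mathcal{R}(A)$ gives $AHAx=Ax$ for all $x\in\mathbb{R}^n$, hence $AHA=A$. The key step is then establishing \ref{P4}, for which my plan is to show that $HA$ is the orthogonal projector onto $\mathcal{R}(A^\top)$. First, \ref{P1} makes $HA$ idempotent: $(HA)(HA)=H(AHA)=HA$. Next, I would identify its range: for any $x$, $HAx$ is the min 2-norm solution of $A\theta=Ax$, hence lies in $\mathcal{R}(A^\top)$, giving $\mathcal{R}(HA)\subseteq\mathcal{R}(A^\top)$; conversely, for $v\in\mathcal{R}(A^\top)$, $v$ itself is the min 2-norm solution of $A\theta=Av$ (since it is feasible and lies in $\mathcal{R}(A^\top)$), so by universality and uniqueness $HAv=v$, giving the reverse inclusion. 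Then I would identify its kernel: $\mathcal{K}(A)\subseteq\mathcal{K}(HA)$ trivially, and if $HAx=0$ with $Ax\neq 0$ then $0$ would be the min 2-norm solution of $A\theta=Ax$, which is impossible, so $\mathcal{K}(HA)=\mathcal{K}(A)=\mathcal{R}(A^\top)^\perp$. Since $HA$ is idempotent with range $\mathcal{R}(A^\top)$ and kernel $\mathcal{R}(A^\top)^\perp$, it is the orthogonal projector onto $\mathcal{R}(A^\top)$, hence symmetric, which is \ref{P4}.

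The main obstacle is the ($\Rightarrow$) direction --- specifically, deducing the symmetry of $HA$ from the mere \emph{existence} of a closed-form formula producing the min 2-norm solution. My approach turns this into a structural statement (``$HA$ is the orthogonal projector onto $\mathcal{R}(A^\top)$''), which reduces the problem to the two routine range/kernel identifications above; the crucial lever is the uniqueness of the min 2-norm solution combined with the fact that vectors already in $\mathcal{R}(A^\top)$ serve as their own min 2-norm preimages under $A$.
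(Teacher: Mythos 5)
Your argument is correct. Note, however, that the paper does not actually prove this statement: it labels it ``a somewhat-involved calculus exercise'' and cites Ben-Israel and Greville, so there is no in-paper proof to match. Your route is a clean, purely linear-algebraic alternative to the textbook's variational argument (which compares $\|Hb\|$ against $\|Hb+(I-HA)y\|$ over the general solution of $A\theta=b$ and extracts symmetry of $HA$ from the resulting orthogonality condition). The one ingredient you take for granted --- that the minimum 2-norm solution is the unique feasible point in $\mathcal{R}(A^\top)=\mathcal{K}(A)^\perp$ --- is standard (orthogonal decomposition plus Pythagoras) and fine to cite at this level. Both directions check out: in ($\Leftarrow$), feasibility from \ref{P1} and membership $Hb=A^\top H^\top x\in\mathcal{R}(A^\top)$ from \ref{P4}; in ($\Rightarrow$), idempotence of $HA$ from \ref{P1}, the range and kernel identifications from universality and uniqueness, and the conclusion that an idempotent with range $S$ and kernel $S^\perp$ is the orthogonal projector onto $S$, hence symmetric. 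A pleasant byproduct is that your proof of ($\Rightarrow$) establishes $HA=A^\dagger A$ directly, i.e., it simultaneously re-derives the paper's Theorem~\ref{thm:proj_p1p4}; you could equally well phrase the whole equivalence through that projector identity.
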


It is trivial to see the following.
\begin{proposition}\label{trivial} \vphantom{\strut}
\begin{itemize}
    \item[(i)] If {\rm\ref{P1}} holds, then $A^\top H^\top A^\top=A^\top$ (that is,  {\rm\ref{P1}} holds when $A$ is
    replaced by $A^\top$ and  $H$ is
    replaced by $H^\top$).
    \item[(ii)] If {\rm\ref{P2}} holds, then $H^\top A^\top H^\top=H^\top$ (that is,  {\rm\ref{P2}} holds when $A$ is
    replaced by $A^\top$ and  $H$ is
    replaced by $H^\top$).
    \item[(iii)] If {\rm\ref{P3}} holds, then $H^\top A^\top = (H^\top A^\top)^\top$ (that is,  {\rm\ref{P4}} holds when $A$ is
    replaced by $A^\top$ and  $H$ is
    replaced by $H^\top$).
    \item[(iv)] If {\rm\ref{P4}} holds, then $A^\top H^\top = (A^\top H^\top)^\top$ (that is, {\rm\ref{P3}} holds when $A$ is
    replaced by $A^\top$ and  $H$ is
    replaced by $H^\top$).
\end{itemize}    
\end{proposition}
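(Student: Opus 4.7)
The approach is to take transposes of the defining equations. Each M-P property is an algebraic identity in $A$ and $H$, and applying the transpose (using $(XY)^\top = Y^\top X^\top$ and $(X^\top)^\top = X$) either returns the same identity with $A$ and $H$ replaced by their transposes, or interchanges \ref{P3} and \ref{P4}. There is no subtlety; the only thing to watch is the correspondence between the conclusion and the labeled property we are asserting.

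For (i) and (ii), the plan is to transpose $AHA = A$ and $HAH = H$ directly, which immediately yield $A^\top H^\top A^\top = A^\top$ and $H^\top A^\top H^\top = H^\top$. These are exactly \ref{P1} and \ref{P2} written for the pair $(A^\top, H^\top)$.

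For (iii) and (iv), the plan is to note that \ref{P3} reads $AH = (AH)^\top = H^\top A^\top$, from which $(H^\top A^\top)^\top = AH = H^\top A^\top$, so $H^\top A^\top$ is symmetric; this is \ref{P4} for $(A^\top, H^\top)$. Symmetrically, \ref{P4} reads $HA = (HA)^\top = A^\top H^\top$, giving $(A^\top H^\top)^\top = HA = A^\top H^\top$, which is \ref{P3} for $(A^\top, H^\top)$.

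There is no main obstacle: the whole proposition is a bookkeeping exercise about transposes. The only reason to state it explicitly is that it will be invoked later (presumably in \S\ref{sec:ah-ha}) to transfer results about ah-symmetric generalized inverses of $A$ into results about ha-symmetric generalized inverses of $A^\top$, and vice versa.
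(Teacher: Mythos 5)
Your proof is correct; the paper states this proposition without any proof (introducing it with ``It is trivial to see the following''), and your transposition argument --- applying $(XY)^\top = Y^\top X^\top$ to each defining identity and noting that \ref{P3} and \ref{P4} swap roles --- is exactly the intended justification.
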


From this and Theorems \ref{thm:proj_p1p3_Rhode}    
and \ref{lem:P1P3eqPLS}, we get the following two results.

\begin{theorem} [see e.g. 
\protect{\cite[Thm. 4 of Chap. 2]{BenIsrael1974}}]\label{thm:proj_p1p4}    
    $H$ satisfies \rm{\ref{P1}+\ref{P4}} if and only if $HA = A^{\dagger}A$, the orthogonal projector onto the range of $A^\top$.
\end{theorem}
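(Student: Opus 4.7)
The plan is to reduce this statement to Theorem \ref{thm:proj_p1p3_Rhode} by a transposition argument based on Proposition \ref{trivial}. In broad strokes, properties \ref{P1}+\ref{P4} for the pair $(A,H)$ correspond to properties \ref{P1}+\ref{P3} for the transposed pair $(A^\top, H^\top)$, so the already-established projector characterization applies in the transposed setting, and we simply transpose the conclusion back.

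First I would record the identity $(A^\top)^\dagger = (A^\dagger)^\top$, which is immediate from the uniqueness of the M-P pseudoinverse: transposing each of \ref{P1}--\ref{P4} for $A^\dagger$ shows that $(A^\dagger)^\top$ satisfies all four M-P properties relative to $A^\top$ (with the roles of \ref{P3} and \ref{P4} interchanged), and uniqueness then forces $(A^\dagger)^\top = (A^\top)^\dagger$.

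Next, by parts (i) and (iv) of Proposition \ref{trivial} (together with the obvious converses, obtained by transposing a second time), $H$ satisfies \ref{P1}+\ref{P4} with respect to $A$ if and only if $H^\top$ satisfies \ref{P1}+\ref{P3} with respect to $A^\top$. Applying Theorem \ref{thm:proj_p1p3_Rhode} to the pair $(A^\top, H^\top)$, this is equivalent to $A^\top H^\top = A^\top (A^\top)^\dagger$. Taking transposes of both sides and using $(A^\top)^\dagger = (A^\dagger)^\top$ yields
\[
HA \;=\; \bigl((A^\top)^\dagger\bigr)^\top A \;=\; A^\dagger A,
\]
which is the desired equation.

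Finally, the assertion that $A^\dagger A$ is the orthogonal projector onto $\mathcal{R}(A^\top)$ follows in the same way: Theorem \ref{thm:proj_p1p3_Rhode} applied to $A^\top$ states that $A^\top (A^\top)^\dagger$ is the orthogonal projector onto $\mathcal{R}(A^\top)$, and since an orthogonal projector is symmetric, this matrix equals its own transpose $\bigl((A^\top)^\dagger\bigr)^\top A = A^\dagger A$. I do not anticipate any genuine obstacle here: the proof amounts to careful bookkeeping of transpositions, and the only mild subtlety is to apply each part of Proposition \ref{trivial} to the correct pair of matrices and to invoke the identity $(A^\top)^\dagger = (A^\dagger)^\top$ consistently.
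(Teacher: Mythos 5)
Your proposal is correct and follows essentially the same route as the paper, which derives this theorem (without spelling out details) ``from'' Proposition \ref{trivial} and Theorem \ref{thm:proj_p1p3_Rhode}, i.e., exactly the transposition reduction you describe. Your write-up just makes explicit the bookkeeping --- the identity $(A^\top)^\dagger=(A^\dagger)^\top$ and the symmetry of the projector --- that the paper leaves implicit.
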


\begin{theorem}\label{lem:P1P4eqPMN}  $H$ satisfies \rm{\ref{P1}+\ref{P4}} if and only if $H$ satisfies 
\begin{align}
& AA^\top H^\top  = A. \tag{PMN}\label{PMN} 
\end{align}
\end{theorem}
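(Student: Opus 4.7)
The plan is to reduce the claim to Theorem \ref{lem:P1P3eqPLS} by exploiting the symmetry between \ref{P3} and \ref{P4} (and between \ref{PLS} and \ref{PMN}) under the transformation $(A,H)\mapsto (A^\top,H^\top)$ codified in Proposition \ref{trivial}. This mirrors the derivation of \ref{PLS} from the normal equations, applied to the ``transposed'' least-squares problem.

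First, I would argue that $H$ satisfies \ref{P1}+\ref{P4} with respect to $A$ if and only if $H^\top$ satisfies \ref{P1}+\ref{P3} with respect to $A^\top$. The forward direction is immediate from parts ($i$) and ($iv$) of Proposition \ref{trivial}: \ref{P1} for $(A,H)$ transposes to \ref{P1} for $(A^\top,H^\top)$, and \ref{P4} for $(A,H)$ transposes to \ref{P3} for $(A^\top,H^\top)$. For the converse, I would apply the same two parts of Proposition \ref{trivial} with $(A^\top,H^\top)$ in place of $(A,H)$; since transposition is an involution, this recovers \ref{P1}+\ref{P4} for $(A,H)$. Hence the equivalence is bidirectional.

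Next, I would invoke Theorem \ref{lem:P1P3eqPLS} with the roles of $A$ and $H$ played by $A^\top\in\mathbb{R}^{n\times m}$ and $H^\top\in\mathbb{R}^{m\times n}$, respectively. That theorem states that $H^\top$ satisfies \ref{P1}+\ref{P3} (with respect to $A^\top$) if and only if $(A^\top)^\top (A^\top)\,H^\top = (A^\top)^\top$, which upon simplification reads $AA^\top H^\top = A$, i.e., \ref{PMN}. Chaining this with the equivalence from the previous paragraph yields the result.

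There is no real obstacle here; the only point to be careful about is the bidirectionality of Proposition \ref{trivial}, whose statements are phrased as one-way implications. As noted above, the involutivity of transposition immediately upgrades each part to an ``if and only if.'' As an alternative route that does not rely on the transposed form of Theorem \ref{lem:P1P3eqPLS}, I could instead give a direct SVD-based proof by writing $A=U\Sigma V^\top$ and $H=V\Gamma U^\top$ with $\Gamma$ block-partitioned as in Theorem \ref{thm:structural}; then $AA^\top H^\top=A$ collapses, by a short block computation, to $D^2 X^\top=D$ and $D^2 Z^\top=0$, equivalently $X=D^{-1}$ and $Z=0$, which are exactly \ref{P1} and \ref{P4} by parts ($i$) and ($iv$) of Theorem \ref{thm:structural}.
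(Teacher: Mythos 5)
Your primary argument is exactly the paper's intended derivation: the paper gives no separate proof but states that the result follows from Proposition \ref{trivial} together with Theorem \ref{lem:P1P3eqPLS} applied to $(A^\top,H^\top)$, which is precisely the transposition reduction you carry out (including the correct observation that the one-way implications of Proposition \ref{trivial} upgrade to equivalences because transposition is an involution). Your alternative SVD computation is also correct but unnecessary; your main route matches the paper.
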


Because of Proposition \ref{trivial}, we do not explore models/algorithms/experiments for 
1-norm minimizing solutions of \ref{P1}+\ref{P4} and \ref{P1}+\ref{P2}+\ref{P4} 
as these problems are equivalent to working with \ref{P1}+\ref{P3} and \ref{P1}+\ref{P2}+\ref{P3}
with respect to $A^\top$.
Nonetheless, we are interested in \ref{P1}+\ref{P3} because it is fundamental to working
with \ref{P1}+\ref{P3}+\ref{P4}, which, owing to Theorems \ref{lem:P1P3eqPLS} and \ref{lem:P1P4eqPMN}, characterizes the simultaneous universal least-squares and min-norm solvers.
It turns out that we can transpose \ref{PLS} and sum it with \ref{PMN} to obtain a reduced system of equations that is equivalent to imposing both of them.

\begin{theorem}\label{lem:PMX}
    $H$ satisfies \rm{\ref{P1}+\ref{P3}+\ref{P4}} (i.e., $H$ is
    a reflexive ah-ha-symmetric generalized inverse of $A$) if and only if $H$ satisfies 
    \begin{align}
&  AA^\top H^\top  + H^\top A^\top A = 2A.\tag{PMX}\label{PMX} 
\end{align}
\end{theorem}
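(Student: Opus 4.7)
I would prove the two directions separately. The forward direction ($\Rightarrow$) is a short computation built from Theorems \ref{lem:P1P3eqPLS} and \ref{lem:P1P4eqPMN}, while the reverse direction ($\Leftarrow$) requires substituting the SVD and reducing \ref{PMX} to a block identity in the $\Gamma$-variables of Theorem \ref{thm:structural}, in close parallel with the proof of Theorem \ref{thm:propertyPLSr}.

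For ($\Rightarrow$), I would invoke Theorem \ref{lem:P1P3eqPLS} to obtain $A^\top A H = A^\top$ from \ref{P1}+\ref{P3}; transposing gives $H^\top A^\top A = A$. Likewise, Theorem \ref{lem:P1P4eqPMN} applied to \ref{P1}+\ref{P4} gives $AA^\top H^\top = A$. Adding these two identities yields \ref{PMX}.

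For ($\Leftarrow$), I would substitute $A = U\Sigma V^\top$ and $H = V\Gamma U^\top$ into \ref{PMX}. Using $AA^\top = U\Sigma\Sigma^\top U^\top$, $A^\top A = V\Sigma^\top\Sigma V^\top$, and $H^\top = U\Gamma^\top V^\top$, then multiplying on the left by $U^\top$ and on the right by $V$, the equation reduces to $\Sigma\Sigma^\top\Gamma^\top + \Gamma^\top\Sigma^\top\Sigma = 2\Sigma$. Plugging in the block partitions of $\Sigma$ and $\Gamma$ gives the block identity
\begin{equation*}
\begin{bmatrix} D^2 X^\top + X^\top D^2 & D^2 Z^\top \\ Y^\top D^2 & 0 \end{bmatrix} = \begin{bmatrix} 2D & 0 \\ 0 & 0 \end{bmatrix}.
\end{equation*}
Since $D$ is invertible, the off-diagonal blocks give $Y = 0$ and $Z = 0$, while the $(2,2)$-block is automatic and leaves $W$ unrestricted. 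Combined with $X = D^{-1}$ (see below), parts (i), (iii), (iv) of Theorem \ref{thm:structural} then deliver \ref{P1}, \ref{P3}, \ref{P4}, as required.

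The main obstacle is the $(1,1)$-block equation $D^2 X^\top + X^\top D^2 = 2D$: I need to show that it forces $X = D^{-1}$. Because $D = \diag(d_1,\ldots,d_r)$ with every $d_i > 0$, this Sylvester-type equation decouples entrywise into $(d_i^2 + d_j^2)(X^\top)_{ij} = 2 d_i \delta_{ij}$. The off-diagonal entries of $X^\top$ must vanish (since $d_i^2 + d_j^2 > 0$), and the diagonal entries must equal $1/d_i$, yielding $X = D^{-1}$ uniquely. With this last piece in hand, the SVD-based argument closes, and Theorem \ref{lem:PMX} follows.
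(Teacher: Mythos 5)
Your proposal is correct and follows essentially the same route as the paper: the reverse direction is identical (substitute the SVD, reduce to $\Sigma\Sigma^\top\Gamma^\top + \Gamma^\top\Sigma^\top\Sigma = 2\Sigma$, read off $Y=0$, $Z=0$ from the off-diagonal blocks, and decouple the $(1,1)$-block entrywise to force $X=D^{-1}$). Your forward direction, transposing \ref{PLS} and adding \ref{PMN}, is exactly the motivation the paper states just before the theorem; the paper's written proof instead rewrites $2AHA=2A$ using \ref{P3} and \ref{P4} directly, but the two derivations are interchangeable.
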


\begin{proof}\phantom{.}

\noindent     1. $(\Rightarrow)$ If $H$ satisfies \ref{P1}+\ref{P3}+\ref{P4}, then $H$ satisfies \ref{PMX}.

    Because $H$ satisfies \ref{P1}, we have
    \begin{equation*}
            AHA + AHA  = 2A \Rightarrow 
            AA^\top H^\top  + H^\top A^\top A  = 2A,
    \end{equation*}
    where we simply use that $H$ satisfies \ref{P4} (first term) and \ref{P3}  (second term).

\noindent     2. $(\Leftarrow)$ If $H$ satisfies \ref{PMX}, then $H$ satisfies \ref{P1}+\ref{P3}+\ref{P4}.

    We consider the full singular-value decomposition $A = U\Sigma V^\top $ and the representation of $H$ given by $H = V\Gamma U^\top $ (see Theorem \ref{thm:structural}). Because $H$ satisfies \ref{PMX}, we have

    \begin{equation*}
        \begin{aligned}
            AA^\top H^\top  + H^\top A^\top A & = 2A &\Rightarrow \\
            (U\Sigma V^\top )(V\Sigma^\top U^\top )(U\Gamma^\top V^\top ) + (U\Gamma^\top V^\top )(V\Sigma^\top U^\top )(U\Sigma V^\top ) & = 2U\Sigma V^\top  &\Rightarrow \\
            U\Sigma \Sigma^\top \Gamma^\top V^\top  + U\Gamma^\top \Sigma^\top \Sigma V^\top  & = 2U\Sigma V^\top  &\Rightarrow \\
            \Sigma \Sigma^\top \Gamma^\top  + \Gamma^\top \Sigma^\top \Sigma & = 2 \Sigma &\Rightarrow \\
            \begin{bmatrix} D & 0 \\ 0 & 0 \end{bmatrix}\begin{bmatrix} D^\top  & 0 \\ 0 & 0 \end{bmatrix}\begin{bmatrix} X^\top  & Z^\top  \\ Y^\top  & W^\top  \end{bmatrix} + \begin{bmatrix} X^\top  & Z^\top  \\ Y^\top  & W^\top  \end{bmatrix}\begin{bmatrix} D^\top  & 0 \\ 0 & 0 \end{bmatrix}\begin{bmatrix} D & 0 \\ 0 & 0 \end{bmatrix} & = 2\begin{bmatrix} D & 0 \\ 0 & 0 \end{bmatrix} &\Rightarrow \\
            \begin{bmatrix} DD^\top X^\top  + X^\top D^\top D & DD^\top Z^\top  \\ Y^\top D^\top D & 0 \end{bmatrix} & = 2 \begin{bmatrix} D & 0 \\ 0 & 0 \end{bmatrix}.
        \end{aligned}
    \end{equation*}
    Thus, we have $DD^\top Z^\top  = 0$ and $Y^\top D^\top D = 0$. Because $D$ is diagonal with non-zero diagonal elements, the only solution is $Z = 0$, $Y = 0$. From $DD^\top X^\top  + X^\top D^\top D = 2D$ we have

    \begin{equation*}
        \begin{array}{rclr}
        X_{ij}D_{jj}^2+D_{ii}^2X_{ij}&=&\left\{\begin{array}{ll} 2D_{ii}\,,&\mbox{if $i=j$;}\\
        0,&\mbox{otherwise}.
        \end{array}\right.
         \end{array}
    \end{equation*}
    Then, we have $X_{ij} = 0$ for all $i \neq j$, and $X_{ii} = \frac{1}{D_{ii}}$ for all $i = j$. So, $X = D^{-1}$. Hence, by Theorem \ref{thm:structural}, $H$ satisfies \ref{P1}. Because $H$ satisfies \ref{P1}, $Y = 0$, and $Z = 0$, we have again by Theorem \ref{thm:structural} that $H$ satisfies \ref{P3} and \ref{P4}.
   \qed
\end{proof}

The problem of finding a 1-norm minimizing simultaneous  universal least-squares solver
and universal minimum-2 norm solver 
can be formulated as any of the following linear-programming problems.
\begin{align}
        & \textstyle \min_{H \in \mathbb{R}^{n \times m}}\{\|H\|_1 : \text{\ref{P1}+\ref{P3}+\ref{P4}}\}, \tag{$P_{134}^1$}\label{p1p134}\\ 
        & \textstyle \min_{H \in \mathbb{R}^{n \times m}}\{\|H\|_1 : \text{\ref{PMN}+\ref{P3}}\}, \tag{$P_{\text{PMN3}}^1$}\label{p1pmn3}\\    
         & \textstyle \min_{H \in \mathbb{R}^{n \times m}}\{\|H\|_1 : \text{\ref{PLS}+\ref{PMN}}\}, \tag{$P_{\text{PLSPMN}}^1$}\label{p1plspmn}\\  
         & \textstyle \min_{H \in \mathbb{R}^{n \times m}}\{\|H\|_1 : (AH=A A^\dagger) +(HA = A^{\dagger}A)\}, \tag{$P_{\mbox{\scriptsize${\mathcal{R}\mathcal{R}^{\scriptscriptstyle\mathsf{T}}}$}}^1$}\label{double}\\  
        & \textstyle \min_{H \in \mathbb{R}^{n \times m}}\{\|H\|_1 : \text{\ref{PMX}}\},  \tag{$P_{\text{PMX}}^1$}\label{p1pmx}\\  
        & \textstyle\min_{W \in \mathbb{R}^{(n-r) \times (m-r)}}\{\|V_1D^{-1}U_1^\top + V_2WU_2^\top\|_1\}.\tag{$\mathcal{P}_{134}^1$} \label{calP134}
\end{align}

Theorems \ref{prop:p1p3} and \ref{prop:p1p4} lead us to the natural formulation \ref{p1p134}\,. 
From there, 
 using Theorem \ref{lem:P1P4eqPMN}, 
 we obtain \ref{p1pmn3}\,.
Returning to \ref{p1p134}\,, and a bit redundantly 
applying Theorems \ref{lem:P1P3eqPLS} and \ref{lem:P1P4eqPMN}, we obtain 
\ref{p1plspmn}\,. The formulation \ref{p1pmx} comes from Theorem \ref{lem:PMX}. The formulation \ref{calP134}
is derived from
Theorem \ref{thm:structural}, setting $X:=D^{-1}$, $Y:=0$ and $Z:=0$ (similar to 
what is done in \cite{PFLX_ORL}).
All six of these formulations can easily be reformulated as LPs. 
Previous computational work, using linear programming, was only for \ref{p1p134}\,;
see \cite{FFL2016}. 

There is no previous sparsity result for 
\ref{p1p134}\,, so we work one out, using the
same scheme as for Theorems \ref{prop:basicguarantee}
and \ref{prop:basicguarantee2}, but with more difficult algebraic reasoning. 

\begin{lemma}[\protect{\cite[From the first paragraph of the proof of Prop. 2.1]{XFLPsiam}}]\label{lemma:basic}
If $\rank(B)=p$, then every extreme solution of
the standard linear-programming reformulation of  
\[
\min\{\|c \circ x\|_1:~ Bx=b,~ x\in\mathbb{R}^n\}
\]
has at most $p$ nonzeros. 
\end{lemma}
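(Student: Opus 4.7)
The plan is to exhibit the standard linear-programming reformulation explicitly, invoke the classical fact that basic feasible solutions have at most (rank of constraint matrix) many positive components, and then translate that bound back to nonzeros of $x$.

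First, I would write out the reformulation following the convention set earlier in the paper. Splitting $x = x^+ - x^-$ with $x^+,x^-\geq 0$ and weighting the objective by $|c|$, the LP becomes
\[
\min\left\{|c|^\top x^+ + |c|^\top x^- ~:~ [B,\,-B]\begin{bmatrix}x^+\\ x^-\end{bmatrix}=b,~ x^+,x^-\geq 0\right\}.
\]
The key observation is that $[B,-B]$ has the same column space as $B$, hence rank $p$.

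Next, I would apply the standard LP characterization: an extreme point of this polyhedron is a basic feasible solution, so the indices of the strictly positive variables correspond to a linearly independent set of columns of $[B,-B]$. Consequently, at most $p$ of the $2n$ variables $(x^+_i, x^-_i)$ are strictly positive at any extreme solution $(\bar x^+, \bar x^-)$.

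Finally, I would translate back. Setting $\bar x_i := \bar x_i^+ - \bar x_i^-$, whenever $\bar x_i \neq 0$ at least one of $\bar x_i^+, \bar x_i^-$ must be strictly positive. Hence the support of $\bar x$ is contained in the union of the supports of $\bar x^+$ and $\bar x^-$, giving at most $p$ nonzeros. There is no real obstacle here—the argument is classical LP theory; the only minor subtlety is checking that $\rank([B,-B]) = \rank(B)$, which is immediate, and that the extreme points of the split polyhedron correspond (via $x=x^+-x^-$) to candidate solutions of the original problem, which is built into the paper's definition of the standard reformulation.
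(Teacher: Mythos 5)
Your argument is correct and is essentially the same classical basic-feasible-solution argument that the paper relies on by citation (the lemma is quoted from the first paragraph of the proof of Prop.~2.1 in the cited reference, not proved in this paper): split $x=x^+-x^-$, observe $\rank([B,\,-B])=\rank(B)=p$, bound the number of positive variables at an extreme point by the rank, and note that each nonzero $\bar x_i$ forces at least one of $\bar x_i^+,\bar x_i^-$ to be positive. No gaps; your side remark that the extreme points of the split polyhedron map onto candidate solutions of the original problem is indeed built into the paper's definition of the standard reformulation.
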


\begin{theorem}\label{prop:basicguarantee3}
Suppose that $A\in \mathbb{R}^{m\times n}$ has rank $r$.
 Extreme solutions of the standard linear-programming reformulation of
 \ref{p1p134} 
 have at most
  $mn - (m-r)(n-r)$ 
  nonzeros. 
\end{theorem}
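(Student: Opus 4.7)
The plan is to follow the same scheme as in Theorems \ref{prop:basicguarantee} and \ref{prop:basicguarantee2}: pick a single linear matrix equation whose solution set coincides with feasibility for \ref{p1p134}, compute the rank of the associated linear operator, and invoke Lemma \ref{lemma:basic}. The natural choice is the reformulation \ref{p1pmx}, since by Theorem \ref{lem:PMX}, \ref{p1p134} is equivalent to
\[
\textstyle\min\{\|H\|_1 ~:~ AA^\top H^\top + H^\top A^\top A = 2A\}.
\]
Defining the linear operator $\mathcal{F}:\mathbb{R}^{n\times m} \to \mathbb{R}^{m\times n}$ by $\mathcal{F}(H) := AA^\top H^\top + H^\top A^\top A$, the constraint is $\mathcal{F}(H) = 2A$. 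After vectorizing (using $\vvec(MNP) = (P^\top \otimes M)\vvec(N)$ together with the commutation matrix relating $\vvec(H)$ and $\vvec(H^\top)$), this takes the form $Bx = b$ with $x := \vvec(H)$, so Lemma \ref{lemma:basic} will give an extreme-point nonzero bound equal to $\rank(B) = \rank(\mathcal{F})$.

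The heart of the proof is thus the computation of $\rank(\mathcal{F})$, which by rank-nullity equals $mn - \dim\ker(\mathcal{F})$. I would compute the kernel directly using the SVD machinery from Theorem \ref{thm:structural}: write $A = U\Sigma V^\top$ and $H = V\Gamma U^\top$ with $\Gamma = \bigl[\begin{smallmatrix} X & Y \\ Z & W\end{smallmatrix}\bigr]$, and substitute into the homogeneous equation $\mathcal{F}(H)=0$. This is exactly the block computation carried out in the ``$(\Leftarrow)$'' direction of the proof of Theorem \ref{lem:PMX}, but with right-hand side $0$ instead of $2\Sigma$. The same block reduction yields
\[
\begin{bmatrix} DD^\top X^\top + X^\top D^\top D & DD^\top Z^\top\\ Y^\top D^\top D & 0 \end{bmatrix} = 0,
\]
and since $D$ is diagonal with strictly positive entries, this forces $Y=0$, $Z=0$, and (from the componentwise Sylvester-type identity $X_{ij}D_{jj}^2 + D_{ii}^2 X_{ij} = 0$) $X=0$, while $W\in\mathbb{R}^{(n-r)\times(m-r)}$ remains entirely free. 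Hence $\dim\ker(\mathcal{F}) = (n-r)(m-r)$ and $\rank(\mathcal{F}) = mn - (m-r)(n-r)$.

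To finish, apply Lemma \ref{lemma:basic} to the standard linear-programming reformulation of \ref{p1pmx}: any extreme solution has at most $\rank(\mathcal{F}) = mn-(m-r)(n-r)$ nonzeros in the concatenated variable $(H^+,H^-)$, hence at most that many nonzeros in $H = H^+-H^-$, which is the claimed bound. The main obstacle is the kernel computation — the operator $\mathcal{F}$ mixes $H$ and $H^\top$, so a direct vectorization gives a somewhat awkward matrix $B$; the trick that makes it tractable is to avoid writing $B$ explicitly and instead exploit the SVD block decomposition already developed in the proof of Theorem \ref{lem:PMX}, which cleanly diagonalizes the problem and isolates the $W$-block as the only free parameters.
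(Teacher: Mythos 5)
Your proposal is correct and follows essentially the same scheme as the paper: reduce \ref{p1p134} to the single linear system \ref{PMX} via Theorem \ref{lem:PMX}, show the associated linear operator has rank $mn-(m-r)(n-r)$, and invoke Lemma \ref{lemma:basic}. The only difference is cosmetic: the paper transposes \ref{PMX} to $A^\top AH + HAA^\top = 2A^\top$ so that vectorization yields the Kronecker sum $(I_m\otimes A^\top A)+(AA^\top\otimes I_n)$, whose zero eigenvalues $\sigma_i^2+\sigma_j^2$ it counts, whereas you compute the nullity directly by the SVD block substitution ($X=Y=Z=0$, $W$ free) --- the same calculation in a different guise.
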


\begin{proof}
    From Theorem \ref{lem:PMX}, \ref{p1p134}  is equivalent to $\min\{\|H\|_1 : A^\top AH + HAA^\top = 2A^\top\}$. Using the vec operator we have that this problem is equivalent to $\min\{\|\vvec(H)\|_1 : ((I_m \otimes A^\top A) +(AA^\top \otimes I_n))\vvec(H) = 2\vvec(A^\top)\}$. To apply Lemma \ref{lemma:basic} to the standard linear-programming reformulation, 
    we want to compute the rank of $(I_m \otimes A^\top A) +(AA^\top \otimes I_n)$. Let $A = U\Sigma V^T$ be the full singular value decomposition of $A$. So, $(I_m \otimes A^\top A) +(AA^\top \otimes I_n) = (I_m \otimes V\Sigma^T\Sigma V^T) + (U\Sigma\Sigma^T U^T \otimes I_n)$. Because multiplication by an orthogonal transformation preserves rank, we have 
    \begin{equation*}
        \begin{aligned}
          &  (U^T \otimes V^T)((I_m \otimes V\Sigma^T\Sigma V^T) + (U\Sigma\Sigma^T U^T \otimes I_n))(U \otimes V)\\
          & \qquad = (U^TU \otimes \Sigma^T\Sigma) + (\Sigma\Sigma^T \otimes V^TV) \\
            & \qquad = (I_m \otimes \Sigma^T\Sigma) + (\Sigma\Sigma^T \otimes I_n).
        \end{aligned}
    \end{equation*}
    Thus, $\rank((I_m \otimes \Sigma^T\Sigma) + (\Sigma\Sigma^T \otimes I_n)) = \rank((I_m \otimes A^\top A) +(AA^\top \otimes I_n))$.
    Using the Rank-Nullity Theorem we have that $\rank((I_m \otimes \Sigma^T\Sigma) + (\Sigma\Sigma^T \otimes I_n)) = mn - \dim(\mathcal{K}((I_m \otimes \Sigma^T\Sigma) + (\Sigma\Sigma^T \otimes I_n)))$.
    
    The quantity $\dim(\mathcal{K}((I_m \otimes \Sigma^T\Sigma) + (\Sigma\Sigma^T \otimes I_n)))$ is the number of zero eigenvalues counting multiplicity. The eigenvalues of $(I_m \otimes \Sigma^T\Sigma) + (\Sigma\Sigma^T \otimes I_n)$ are $\lambda_{i,j} = \sigma_i^2 + \sigma_j^2$, where $\sigma_i^2$ and $\sigma_j^2$ are the singular values of $\Sigma^T\Sigma$ and $\Sigma\Sigma^T$, respectively.
    Thus, $\dim(\mathcal{K}((I_m \otimes \Sigma^T\Sigma) + (\Sigma\Sigma^T \otimes I_n))) = |\{ \sigma_i^2 + \sigma_j^2 = 0 : i = 1, ..., n, j = 1, ..., m \}|$. Because $\sigma_i^2 = 0$ $\forall i > r$ and $\sigma_j^2 = 0$ $\forall j > r$, we have that $\dim(\mathcal{K}((I_m \otimes \Sigma^T\Sigma) + (\Sigma\Sigma^T \otimes I_n))) = (m-r)(n-r)$.
    Thus, $\rank((I_m \otimes \Sigma^T\Sigma) + (\Sigma\Sigma^T \otimes I_n)) = mn - (m-r)(n-r)$, and
    therefore extreme solutions of the LP for \ref{p1p134} have at most that many nonzeros.
    \qed
\end{proof}

\subsection{ADMM for \texorpdfstring{\ref{p1p134}}{P134}}\label{sec:admm134}

We consider 
ADMM to solve \ref{p1p134}\,, using its equivalent formulation \ref{calP134}\,. Because problem \ref{calP134} is very similar to problem \ref{calP123}\,, our goal is to adapt the ADMM 
from \cite{ponte2024goodfastrowsparseahsymmetric}, discussed in \S\ref{sec:admm123} to \ref{calP134}\,, and verify its computational efficiency running numerical experiments.
Initially, introducing a variable $H \in \mathbb{R}^{n \times m}$, we rewrite \ref{calP134} as
\begin{equation}\label{prob:admm_mat_1normb}
\textstyle\min \{
 \|H\|_{1} ~:~  H =  V_1D^{-1}U_1^\top + V_2WU_2^\top
 \}.
\end{equation}
The augmented Lagrangian associated with \eqref{prob:admm_mat_1normb} is
\begin{align*}
    \textstyle \mathcal{L}_\rho(W,H,\Lambda)
     &:=
     \|H\|_{1} \!+\! \textstyle \frac{\rho}{2}\!\left\|V_1D^{-1}U_1^\top \!+\! V_2WU_2^\top \!-\!H \!+\!\Lambda \right\|^2_F - 
      \frac{\rho}{2}\left\|\Lambda\right\|_F^2\,, 
\end{align*}
where $\rho >0$ is the penalty parameter
and  $\Lambda $ is the scaled Lagrange multiplier; that is,  $\Theta := \rho\Lambda$ is the Lagrange multiplier associated to the constraint in \eqref{prob:admm_mat_1normb}. 
We apply 
ADMM
to \ref{calP134}  by iteratively solving, for $k=0,1,\ldots,$
\begin{align}
    &W^{k+1}:=\textstyle\argmin_W ~ \mathcal{L}_\rho(W,H^{k},\Lambda^k),\label{eq:Wmin1normsubpa}\\
    &H^{k+1}:=\textstyle\argmin_H ~ \mathcal{L}_\rho(W^{k+1},H,\Lambda^k),\label{eq:Emin1normsubpab}\\
    &\textstyle\Lambda^{k+1}:=\Lambda^{k} + V_1D^{-1}U_1^\top + V_2 W^{k+1}U_2^\top - H^{k+1}.\nonumber
\end{align}
Following the ideas presented in \S\ref{sec:admm123}, we show how to update $W$, $H$ and~$\Lambda$.

\medskip

\noindent {\bf{Update \texorpdfstring{$W$}{W}:}} 
Consider \eqref{eq:Wmin1normsubpa}. Defining $J:=  H^k -V_1D^{-1}U_1^\top -\Lambda^k$, we solve 
\begin{equation}\label{eq:Wmin1normsubprob}
    \begin{array}{rl}
        W^{k+1}=\textstyle \argmin_{W}  \left\| J - V_2WU_2^\top\right\|^2_F\,.
    \end{array}
\end{equation}
We can easily verify that the solution of \eqref{eq:Wmin1normsubprob} is given by $W^{k+1} =  V_2^\top J U_2$\,.

\medskip

\noindent {\bf{Update \texorpdfstring{$H$}{H}:}} 
Consider  \eqref{eq:Emin1normsubpab}. Defining $Y := V_1D^{-1}U_1^\top + V_2W^{k+1}U_2^\top  +\Lambda^k$, we solve
\begin{align*}
H^{k+1}=\textstyle\argmin_H\left\{ \|H\|_{1} + \frac{\rho}{2}\left\|H -Y  \right\|^2_F\right\},
\end{align*}
which is equivalent to \eqref{eq:Emin1normsubprob} (for a different $Y$) and has its solution presented in \S\ref{sec:admm123}.

\medskip

\noindent
{\bf{Initialization:}}   
Following \cite{ponte2024goodfastrowsparseahsymmetric},  we consider the dual problem of \eqref{prob:admm_mat_1normb} to initialize $\Lambda$, namely
\begin{equation}\label{prob:dualadmm_mat_1norm}
\textstyle\max_{ \Theta}\{\tr(D^{-1}V_1^\top \Theta U_1)~:~V_2^\top \Theta U_2 = 0,~\|\Theta\|_{\infty} \leq 1\}.
\end{equation}
Then we set  $\textstyle \hat \Theta :=\textstyle \frac{1}{\|V_1 U_1^{\top\vphantom{\Sigma^x}}\|_{\infty}} V_1 U_1^\top$\,, which is feasible to \eqref{prob:dualadmm_mat_1norm} due to the orthogonality of $V$,
and  $\Lambda^0:=\hat\Theta/\rho$.
From Theorem~\ref{thm:structural}, we see that  ah-ha symmetric generalized inverses of  $A$ can be written as $V_1D^{-1}U_1^\top + V_2WU_2^\top$\,. We also see that the M-P pseudoinverse $A^\dagger$ can be written as $V_1D^{-1}U_1^\top$, and we recall that $A^\dagger$ is the generalized inverse of $A$ with minimum Frobenius norm. Then, aiming to obtain $W^1=0$ when solving \eqref{eq:Wmin1normsubprob} at the first iteration of the algorithm, and consequently starting the algorithm with a Frobenius-norm minimizing ah-ha symmetric generalized inverse,  we set $H^0 := V_1D^{-1}U_1^\top + \Lambda^0$.

\medskip

\noindent{\bf{Stopping criterion:}} As in \S\ref{sec:admm123},
we select an absolute tolerance $\epsilon^{\mbox{\scriptsize abs}}$ and a
relative tolerance $\epsilon^{\mbox{\scriptsize rel}}$. The algorithm stops at  iteration $k$ if  
\begin{align*}
    &\|r^{k+1}\|_F \leq \epsilon^{\mbox{\scriptsize abs}}\sqrt{nm} 
    + \epsilon^{\mbox{\scriptsize rel}}\max\{\|H^{k+1}\|_F\,, \|V_2W^{k+1}U_2^\top\|_F\,,\|V_1D^{-1}U_1^\top\|_F\},\\
    &\|s^{k+1}\|_F \leq \epsilon^{\mbox{\scriptsize abs}}
    \sqrt{(n-r)r} +    
\epsilon^{\mbox{\scriptsize rel}}\rho \|V_2^\top\Lambda^{k+1}U_2\|_F\,,
\end{align*}
where $r^{k+1}\! := V_1D^{-1}U_1^\top\! +\! V_2W^{k+1}U_2^\top \!-\! H^{k+1}$, $s^{k+1}\!:= \rho V_2^\top(H^{k+1} \!-\! H^{k})U_2$\, are primal and dual residuals.

As was done for \ref{p1231a}\,, we always construct  a feasible solution for \ref{p1p134}\,, considering the result of Theorem \ref{thm:structural}, and taking as output of 
ADMM
the matrix $V_1D^{-1}U_1^\top + V_2 W^{k+1}U_2^\top$, where $W^{k+1}$ was obtained in its last iteration.

%%%%%%%%%%%%%%%%%%%%%

\subsection{DRS for \texorpdfstring{\ref{p1p134}}{P134}}\label{sec:DRS134}

We consider applying DRS to the reformulation of \ref{p1p134} given by \ref{p1plspmn}\,. In this case, we have  $\mathcal{C} := \{H : A^\top AH = A^\top, HAA^\top = A^\top\}$. In the following, we address how to compute a projection onto this affine set and how to compute the primal and dual residuals, as we did in \S\ref{sec:proj_p123}. 
\begin{proposition}\label{projP134}
    If $\mathcal{C} := \{H : A^\top AH = A^\top, HAA^\top = A^\top\}$, for $A\in\mathbb{R}^{m\times n}$, then 
    \[
    \Pi_{\mathcal{C}}(V) = V - A^\dagger AV + A^\dagger - VAA^\dagger + A^\dagger AVAA^\dagger.
    \]
\end{proposition}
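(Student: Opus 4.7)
The plan is to mirror the Lagrangian-style derivations used for Propositions \ref{projP13} and \ref{projP123}. I would introduce multipliers $\Lambda, \Gamma \in \mathbb{R}^{n\times m}$ and set up
\[
\mathcal{L}(H,\Lambda,\Gamma) := \|V - H\|_F^2 + \langle \Lambda, A^\top A H - A^\top\rangle + \langle \Gamma, H A A^\top - A^\top\rangle;
\]
the stationarity condition $\nabla_H \mathcal{L} = 0$ yields $H = V - \frac{1}{2}(A^\top A \Lambda + \Gamma A A^\top)$. Substituting this into each of the two constraints gives two matrix equations coupled in $\Lambda$ and $\Gamma$. To decouple them I would use the identities $A A^\top (I - A A^\dagger) = 0$ and $(I - A^\dagger A) A^\top A = 0$: right-multiplying the substituted equation $A^\top A H = A^\top$ by $(I - A A^\dagger)$ annihilates the $\Gamma A A^\top$ contribution, and left-multiplying the substituted equation $H A A^\top = A^\top$ by $(I - A^\dagger A)$ annihilates the $A^\top A \Lambda$ contribution, while the complementary projectors $A A^\dagger$ and $A^\dagger A$ isolate the ``inner'' parts. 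Following the chain of $A^\dagger$-multiplications and the Kronecker-product orthogonal-projector trick already used in Proposition \ref{projP123}, one solves for a valid $(\hat\Lambda, \hat\Gamma)$, substitutes back, and simplifies with $A^\dagger A A^\dagger = A^\dagger$, $A^\top A A^\dagger = A^\top$, and $A^\dagger A A^\top = A^\top$ to recover the claimed formula; feasibility is then verified directly, and optimality follows by convexity.

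The principal obstacle is precisely this coupling. In Proposition \ref{projP123}, the constraint $H A A^\dagger = H$ is homogeneous, so right-multiplying by $A$ immediately cancelled its multiplier's contribution and left a single clean equation for $\Lambda$. Here both constraints are inhomogeneous, and no single one-sided multiplication simultaneously kills both right-hand sides; the two-sided projector decomposition above is needed, and the bookkeeping is noticeably heavier than in the earlier propositions.

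A cleaner and shorter alternative is to observe that the feasible set is $\mathcal{C} = A^\dagger + (N_1 \cap N_2)$, where $N_1 := \{D : A D = 0\}$ and $N_2 := \{D : D A = 0\}$ are the direction subspaces of the two affine constraints, and the orthogonal projectors onto $N_1$ and $N_2$ are $X \mapsto (I - A^\dagger A) X$ and $X \mapsto X(I - A A^\dagger)$ respectively. These commute trivially (they act on opposite sides of $X$), so the orthogonal projector onto $N_1 \cap N_2$ is their composition. Using $(I - A^\dagger A) A^\dagger = 0$ (which follows from \ref{P2}), this yields
\[
\Pi_{\mathcal{C}}(V) = A^\dagger + (I - A^\dagger A) V (I - A A^\dagger),
\]
and expanding the right-hand side produces the claimed formula in a single step.
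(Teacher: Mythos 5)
Your second argument is correct and takes a genuinely different route from the paper. The paper proves this by a KKT computation: stationarity of the Lagrangian gives $H = V - \tfrac{1}{2}(A^\top A\Lambda + \Gamma AA^\top)$, and then a long chain of left- and right-multiplications by $A^\dagger$ and ${A^\dagger}^\top$ decouples the two multipliers, recasts each resulting equation so that its coefficient operator is an orthogonal projector in Kronecker form, reads off explicit $\hat\Lambda$ and $\hat\Gamma$, and substitutes back. Your alternative bypasses multipliers entirely: you write $\mathcal{C} = A^\dagger + (N_1\cap N_2)$, observe that the orthogonal projectors onto $N_1$ and $N_2$ act by left- and right-multiplication respectively and hence commute, so their composition is the orthogonal projector onto $N_1\cap N_2$, and the formula $\Pi_{\mathcal{C}}(V) = A^\dagger + (I-A^\dagger A)V(I-AA^\dagger)$ expands to the claimed expression. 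This is substantially shorter, avoids the delicate decoupling that you correctly identify as the main obstacle in the Lagrangian route (and which your first sketch does not fully execute), and makes the underlying structure transparent. Two small points should be stated explicitly to make it airtight: (i) $A^\dagger\in\mathcal{C}$, which follows from $A^\top AA^\dagger = (AA^\dagger A)^\top = A^\top$ and $A^\dagger AA^\top = (AA^\dagger A)^\top = A^\top$ using \ref{P3} and \ref{P4} for $A^\dagger$; and (ii) the direction space of the constraint $A^\top AH = A^\top$ is $\{D: AD = 0\}$ and not merely contained in $\{D: A^\top AD = 0\}$, which holds because $A^\top AD = 0$ implies $(AD)^\top(AD) = 0$ and hence $AD = 0$ (with the analogous argument for $DAA^\top = 0$). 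Both are routine, and with them your proof is complete.
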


\begin{proof}
    Let $V \in \mathbb{R}^{n \times m}$ be a given matrix. We want to find the closed-form solution~of
    \begin{equation}\label{prob:proj_p134}
        \textstyle\argmin_H\{\|V-H\|_F^2 : A^\top AH = A^\top, HAA^\top = A^\top \} .\tag{$\text{Proj}_{134}$}
    \end{equation}
   Using Lagrange multipliers $\Lambda,\Psi\in\mathbb{R}^{n\times m}$,
   the Lagrangian of 
\ref{prob:proj_p134} is  
    \begin{equation*}
        \mathcal{L}(H, \Lambda, \Psi) := \|V-H\|_F^2 + \langle \Lambda, A^\top AH - A^\top \rangle + \langle \Psi, HAA^\top - A^\top \rangle.
    \end{equation*}
As \ref{prob:proj_p134} is convex, its primal-dual solution $(H, \Lambda, \Psi)$ satisfies $\nabla_H \mathcal{L}(H, \Lambda, \Psi) = 0$, or equivalently    
\begin{equation}\label{gradzero}
    H  = V -\textstyle\frac{1}{2}(A^\top A\Lambda + \Psi AA^\top).
\end{equation}
From \eqref{gradzero} and $A^\top AH=A^\top$, we have
        \begin{align}
            A^\top & = A^\top AV -\textstyle\frac{1}{2}A^\top A(A^\top A\Lambda + \Psi AA^\top) & \Rightarrow \nonumber\\
             A^\top AA^\top A\Lambda & = 2(A^\top AV - A^\top) - A^\top A\Psi AA^\top & \Rightarrow \nonumber\\
            A^\dagger {A^\dagger}^\top A^\top AA^\top A\Lambda & = 2A^\dagger {A^\dagger}^\top(A^\top AV - A^\top) - A^\dagger {A^\dagger}^\top A^\top A\Psi AA^\top & \Leftrightarrow \nonumber\\
            A^\top A\Lambda & = 2(A^\dagger AV - A^\dagger) - A^\dagger A\Psi AA^\top.& \label{eqforlambda}
        \end{align}
    From \eqref{gradzero}, $HAA^\top = A^\top$, and \eqref{eqforlambda}, we have
    \begin{align*}
            A^\top & = VAA^\top -\textstyle\frac{1}{2}(A^\top A\Lambda + \Psi AA^\top)AA^\top & \Leftrightarrow \\
            \Psi AA^\top AA^\top & = 2(VAA^\top - A^\top) - A^\top A\Lambda AA^\top & \Rightarrow\\
            \Psi AA^\top AA^\top {A^\dagger}^\top A^\dagger & = 2(VAA^\top - A^\top){A^\dagger}^\top A^\dagger \\ &\qquad\!\!\! - A^\top A\Lambda AA^\top {A^\dagger}^\top A^\dagger & \Leftrightarrow \\
            \Psi AA^\top & = 2(VAA^\dagger - A^\dagger) - A^\top A\Lambda AA^\dagger & \Leftrightarrow \\
            \Psi AA^\top & = 2(VAA^\dagger - A^\dagger) \\ &\qquad \!\!\! \!-\! (2(A^\dagger AV \!-\! A^\dagger) \!-\! A^\dagger\! A\Psi AA^\top) AA^\dagger & \Leftrightarrow \\
            \Psi AA^\top & = 2(VAA^\dagger - A^\dagger) \\ &\qquad \!\!\!-\! 2(A^\dagger\! AVAA^\dagger \!-\! A^\dagger)\! + \!A^\dagger\! A\Psi AA^\top & \Leftrightarrow \\
            \Psi AA^\top - A^\dagger A\Psi AA^\top & = 2(VAA^\dagger - A^\dagger AVAA^\dagger) & \Rightarrow \\
            \Psi AA^\top {A^\dagger}^\top - A^\dagger A\Psi AA^\top {A^\dagger}^\top & = 2(VAA^\dagger - A^\dagger AVAA^\dagger){A^\dagger}^\top & \Leftrightarrow \\
            \Psi A - A^\dagger A\Psi A & = 2(VAA^\dagger - A^\dagger AVAA^\dagger){A^\dagger}^\top & \Rightarrow \\
            \Psi AA^\dagger - A^\dagger A\Psi AA^\dagger & = 2(VAA^\dagger - A^\dagger AVAA^\dagger){A^\dagger}^\top A^\dagger & \Leftrightarrow \\
            ((AA^\dagger) \!\otimes\! I_n \!-\! (AA^\dagger) \!\otimes\! (A^\dagger A))\vvec(\Psi) & = 2\vvec((VAA^\dagger \!-\! A^\dagger AVAA^\dagger){A^\dagger}^\top\! A^\dagger).
        \end{align*}
    As $(AA^\dagger) \otimes I_n - (AA^\dagger) \otimes (A^\dagger A)$ is an orthogonal projection,  $\hat\Psi := 2(VAA^\dagger - A^\dagger AVAA^\dagger){A^\dagger}^\top A^\dagger$ solves the above equation. Substituting it into \eqref{eqforlambda} gives
        \begin{align*}
            A^\top A\Lambda & = 2(A^\dagger AV - A^\dagger) - A^\dagger A\Psi AA^\top & \Leftrightarrow   \\
            A^\top A\Lambda & = 2(A^\dagger AV - A^\dagger)\\ &\qquad - 2A^\dagger A(VAA^\dagger - A^\dagger AVAA^\dagger){A^\dagger}^\top A^\dagger AA^\top & \Leftrightarrow \\
            A^\top A\Lambda & = 2(A^\dagger AV - A^\dagger) -2A^\dagger AVAA^\dagger + 2A^\dagger AVAA^\dagger & \Leftrightarrow \\
            A^\top A\Lambda & = 2(A^\dagger AV - A^\dagger) & \Rightarrow \\
            {A^\dagger}^\top A^\top A\Lambda & = 2{A^\dagger}^\top(A^\dagger AV - A^\dagger) & \Leftrightarrow \\
            A\Lambda & = 2{A^\dagger}^\top(A^\dagger AV - A^\dagger) & \Rightarrow \\
            A^\dagger A\Lambda & = 2A^\dagger{A^\dagger}^\top(A^\dagger AV - A^\dagger) & \Leftrightarrow \\
            (I_m \otimes (A^\dagger A))\vvec(\Lambda) & = 2\vvec(A^\dagger{A^\dagger}^\top(A^\dagger AV - A^\dagger)).
        \end{align*}
   As $I_m \otimes (A^\dagger A)$ is an orthogonal projection,  $\hat\Lambda := 2A^\dagger{A^\dagger}^\top(A^\dagger AV - A^\dagger)$ solves the above equation.

     Finally, substituting $\hat\Psi$ and $\hat\Lambda$ into  \eqref{gradzero} gives
        \begin{align*}
            H& = V - A^\top AA^\dagger{A^\dagger}^\top(A^\dagger AV - A^\dagger) - (VAA^\dagger - A^\dagger AVAA^\dagger){A^\dagger}^\top A^\dagger AA^\top \\
            & = V - A^\dagger A(A^\dagger AV - A^\dagger) - (VAA^\dagger - A^\dagger AVAA^\dagger)AA^\dagger \\
            & = V - A^\dagger AV + A^\dagger - VAA^\dagger + A^\dagger AVAA^\dagger.
        \end{align*}
         Furthermore, we can easily verify that the $H$ obtained is feasible to \ref{prob:proj_p134}\,. 
    \qed
\end{proof}

From Theorems \ref{lem:P1P3eqPLS}, \ref{lem:P1P4eqPMN} and \ref{lem:PMX}, we see that the constraints in \ref{p1plspmn} are equivalent to \ref{PMX}.
So, following \cite{Fu_2020}, we  define the primal residual at iteration $k$  of the DRS algorithm as
\begin{equation*}
    r_p^{k} := A^\top AH^{k+1/2} + H^{k+1/2}AA^\top - 2A^\top.
\end{equation*}

To derive the dual residual at iteration $k$, we consider the Lagrangian of  \ref{p1plspmn} and its subdifferential with respect to $H$:
\begin{equation*}
    \mathcal{L}(H, \Lambda, \Psi) := \|H\|_1 + \langle \Lambda, A^\top AH - A^\top \rangle + \langle \Psi, HAA^\top - A^\top \rangle,
\end{equation*}
\begin{equation*}
    \partial_H \mathcal{L}(H, \Lambda, \Psi) = \partial \|H\|_1 + A^\top A\Lambda + \Psi AA^\top.
\end{equation*}
Then, following the same steps of \S\ref{sec:DRS13}, 
we choose $\Lambda, \Psi$ that solves the following least-squares problem:
\begin{equation}\label{prob_134:dual_variable_ls}
    \textstyle \argmin_{\Lambda,\Psi}\|A^\top A\Lambda + \Psi AA^\top - \frac{1}{\lambda}(H^{k+1/2}-V^k)\|_F^2\,.
\end{equation}

\begin{proposition}
    The solution of \eqref{prob_134:dual_variable_ls} is $\textstyle\hat\Lambda := \frac{1}{\lambda}A^\dagger{A^\dagger}^\top (H^{k+1/2}-V^k)$ and $\textstyle \hat\Psi := (\frac{1}{\lambda}(H^{k+1/2}-V^k) - \frac{1}{\lambda}A^\dagger A(H^{k+1/2}-V^k)){A^\dagger}^\top A^\dagger$.
\end{proposition}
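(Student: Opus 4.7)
The plan is to verify directly that the proposed $\hat\Lambda$ and $\hat\Gamma$ satisfy the first-order optimality conditions for the convex quadratic problem \eqref{prob_134:dual_variable_ls}. Writing $R := \frac{1}{\lambda}(H^{k+1/2}-V^k)$ and $E(\Lambda,\Gamma) := A^\top A\Lambda + \Gamma AA^\top - R$, the normal equations obtained by taking the Frobenius gradient with respect to $\Lambda$ and $\Gamma$ are
\begin{equation*}
A^\top A \cdot E(\hat\Lambda,\hat\Gamma) = 0 \quad \text{and} \quad E(\hat\Lambda,\hat\Gamma) \cdot AA^\top = 0.
\end{equation*}
Since the objective is convex, these conditions are both necessary and sufficient.

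First, I would simplify $A^\top A\hat\Lambda$ and $\hat\Gamma AA^\top$ using the standard Moore-Penrose identities $A^\top AA^\dagger = A^\top$ and $A^\dagger AA^\top = A^\top$ (which follow from $AA^\dagger A=A$ together with the symmetry of $AA^\dagger$ and $A^\dagger A$). A short calculation gives
\begin{equation*}
A^\top A\hat\Lambda = A^\dagger A\,R, \qquad \hat\Gamma AA^\top = (R - A^\dagger A\,R)\,AA^\dagger,
\end{equation*}
so that
\begin{equation*}
A^\top A\hat\Lambda + \hat\Gamma AA^\top \;=\; A^\dagger A\,R \;+\; R\,AA^\dagger \;-\; A^\dagger A\,R\,AA^\dagger.
\end{equation*}

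Next, I would interpret this right-hand side as the orthogonal projection of $R$ onto the subspace $\mathcal{S} := \{A^\top A\Lambda + \Gamma AA^\top : \Lambda,\Gamma\}$. Since the columns of any matrix of the form $A^\top A\Lambda$ lie in $\mathcal{R}(A^\top)$ and the rows of any $\Gamma AA^\top$ lie in $\mathcal{R}(A)$, a short duality argument (using $\langle Z, A^\top A\Lambda\rangle = \langle A^\dagger A Z, \Lambda\rangle$ and $\langle Z, \Gamma AA^\top\rangle = \langle \Gamma, ZAA^\dagger\rangle$, valid because $A^\dagger A$ and $AA^\dagger$ are symmetric) shows that $\mathcal{S}^\perp = \{Z : A^\dagger A\,Z = 0,\ Z\,AA^\dagger = 0\}$. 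Equivalently, $\Pi_{\mathcal{S}^\perp}(R) = (I - A^\dagger A)\,R\,(I - AA^\dagger)$, whence
\begin{equation*}
\Pi_{\mathcal{S}}(R) \;=\; R - (I - A^\dagger A)\,R\,(I - AA^\dagger) \;=\; A^\dagger A\,R + R\,AA^\dagger - A^\dagger A\,R\,AA^\dagger,
\end{equation*}
which matches the expression computed above. Consequently $E(\hat\Lambda,\hat\Gamma) = R - \Pi_{\mathcal{S}}(R) \in \mathcal{S}^\perp$, yielding both normal equations.

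The main obstacle is keeping the Moore-Penrose bookkeeping clean: the problem has two coupled matrix variables, so an elimination-style derivation (as in Propositions~\ref{projP13} and~\ref{projP123}) proliferates terms and Kronecker identities. Passing through the orthogonal-projection interpretation of $\mathcal{S}$ keeps the argument compact and isolates the key algebraic content to the two simplifications $A^\top AA^\dagger = A^\top$ and $A^\dagger AA^\top = A^\top$, together with the symmetry/idempotence of $AA^\dagger$ and $A^\dagger A$.
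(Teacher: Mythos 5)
Your proof is correct. The normal equations $A^\top A\,E=0$ and $E\,AA^\top=0$ are indeed necessary and sufficient for this convex quadratic, your simplifications $A^\top AA^\dagger=A^\top$ and $A^\dagger AA^\top=A^\top$ are valid Moore--Penrose identities, and the resulting residual $E(\hat\Lambda,\hat\Gamma)=-(I-A^\dagger A)\,R\,(I-AA^\dagger)$ is annihilated on the left by $A^\top A$ and on the right by $AA^\top$ because $AA^\dagger A=A$. (As the paper's notational conventions allow, ``the solution'' should be read as ``a solution'': the map $(\Lambda,\Gamma)\mapsto A^\top A\Lambda+\Gamma AA^\top$ has a nontrivial kernel, so the minimizer is not unique.)

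Note that the paper states this proposition without proof, merely saying it follows ``the same steps'' as \S\ref{sec:DRS13}; the closest worked analogues are Propositions~\ref{projP13}, \ref{projP123} and \ref{projP134}, which proceed by writing the Lagrangian stationarity condition and eliminating the multipliers through long chains of left/right multiplications by $A^\dagger$ and ${A^\dagger}^\top$, finishing with a Kronecker-product identification of an orthogonal projector. Your route is genuinely different and, for this two-variable coupled problem, cleaner: rather than solving for $\hat\Lambda$ and $\hat\Gamma$ by elimination, you verify the normal equations directly and identify $A^\top A\hat\Lambda+\hat\Gamma AA^\top$ with $\Pi_{\mathcal{S}}(R)$, where $\mathcal{S}^\perp=\{Z: A^\dagger AZ=0,\ ZAA^\dagger=0\}$. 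What the paper's elimination style buys is a constructive derivation of the formulas (useful when one does not already have candidates in hand); what your argument buys is a short, self-contained verification that isolates the algebra to two identities and one orthogonal-complement computation, and it also makes transparent \emph{why} the dual residual reported immediately after the proposition equals $-(I-A^\dagger A)(H^{k+1/2}-V^k)(I-AA^\dagger)/\lambda$ after simplification.
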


\begin{proof}
    Let $L(\Lambda, \Psi) := \|A^\top A\Lambda + \Psi AA^\top - B\|_F^2$, where $B := \frac{1}{\lambda}(H^{k+1/2}-V^k)$. As $L$ is a  convex function, its minimum is attained at $(\hat\Lambda, \hat\Psi)$  that satisfies $\nabla_\Lambda L(\hat\Lambda, \hat\Psi) = 0$ and $\nabla_\Psi L(\hat\Lambda, \hat\Psi) = 0$. Computing first the gradient with respect to $\Lambda$, we have
\begin{equation*} 
        \nabla_\Lambda L(\Lambda, \Psi) = 2A^\top A(A^\top A\Lambda + \Psi AA^\top - B).
    \end{equation*}

    From $\nabla_\Lambda L(\Lambda, \Psi) = 0$, we have
    \begin{equation*}
        \begin{aligned}
            2A^\top A(A^\top A\Lambda + \Psi AA^\top - B) & = 0 & \Leftrightarrow \\
            A^\top AA^\top A\Lambda & = A^\top A(B - \Psi AA^\top) & \Rightarrow \\
            {A^\dagger}^\top A^\top AA^\top A\Lambda & = {A^\dagger}^\top A^\top A(B - \Psi AA^\top) & \Leftrightarrow \\
            AA^\top A\Lambda & = A(B - \Psi AA^\top) & \Rightarrow \\
            A^\dagger AA^\top A\Lambda & = A^\dagger A(B - \Psi AA^\top) & \Leftrightarrow \\
            A^\top A\Lambda & = A^\dagger A(B - \Psi AA^\top). \\
        \end{aligned}
    \end{equation*}

    Next, we will use this  last equation, first to obtain a value for $\hat \Psi$, and then to obtain a value to $\hat\Lambda$. Computing the gradient of $L$ with respect to $\Psi$, we have

    \begin{equation*} 
        \nabla_\Psi L(\Lambda, \Psi) = 2(A^\top A\Lambda + \Psi AA^\top - B)AA^\top.
    \end{equation*}

    From $\nabla_\Psi L(\Lambda, \Psi) = 0$, we have

    \begin{equation*}
        \begin{aligned}
            (A^\top A\Lambda + \Psi AA^\top - B)AA^\top & = 0 & \Leftrightarrow \\
            \Psi AA^\top AA^\top & = (B - A^\top A\Lambda)AA^\top & \Leftrightarrow \\
            \Psi AA^\top AA^\top & = (B - A^\dagger A(B - \Psi AA^\top))AA^\top & \Leftrightarrow \\
            \Psi AA^\top AA^\top & = (B - A^\dagger AB)AA^\top + A^\dagger A\Psi AA^\top AA^\top & \Leftrightarrow \\
            \Psi AA^\top AA^\top - A^\dagger A\Psi AA^\top AA^\top & = (B - A^\dagger AB)AA^\top & \Rightarrow \\
            \Psi AA^\top AA^\top {A^\dagger}^\top - A^\dagger A\Psi AA^\top AA^\top {A^\dagger}^\top & = (B - A^\dagger AB)AA^\top {A^\dagger}^\top & \Leftrightarrow \\
            \Psi AA^\top A - A^\dagger A\Psi AA^\top A & = (B - A^\dagger AB)A & \Rightarrow \\
            \Psi AA^\top AA^\dagger - A^\dagger A\Psi AA^\top AA^\dagger & = (B - A^\dagger AB)AA^\dagger & \Leftrightarrow \\
            \Psi AA^\top - A^\dagger A\Psi AA^\top & = (B - A^\dagger AB)AA^\dagger & \Rightarrow \\
            \Psi AA^\top {A^\dagger}^\top - A^\dagger A\Psi AA^\top {A^\dagger}^\top & = (B - A^\dagger AB)AA^\dagger {A^\dagger}^\top & \Leftrightarrow \\
            \Psi A - A^\dagger A\Psi A & = (B - A^\dagger AB){A^\dagger}^\top & \Rightarrow \\
            \Psi AA^\dagger - A^\dagger A\Psi AA^\dagger & = (B - A^\dagger AB){A^\dagger}^\top A^\dagger & \Leftrightarrow \\
            ((AA^\dagger) \otimes I_n - (AA^\dagger) \otimes (A^\dagger A))\vvec(\Psi) & = \vvec((B - A^\dagger AB){A^\dagger}^\top A^\dagger).
        \end{aligned}
    \end{equation*}

    As $((AA^\dagger) \otimes I_n - (AA^\dagger) \otimes (A^\dagger A))$ is an orthogonal projection,  $\Psi = (B - A^\dagger AB){A^\dagger}^\top A^\dagger$ solves this equation.  Substituting it back in the last equation for $\Lambda$ we have

    \begin{equation*}
        \begin{aligned}
            A^\top A\Lambda & = A^\dagger A(B - \Psi AA^\top) & \Leftrightarrow \\
            A^\top A\Lambda & = A^\dagger A(B - (B - A^\dagger AB){A^\dagger}^\top A^\dagger AA^\top) & \Leftrightarrow \\
            A^\top A\Lambda & = A^\dagger A(B - BAA^\dagger - A^\dagger ABA A^\dagger) & \Leftrightarrow \\
            A^\top A\Lambda & = A^\dagger AB & \Rightarrow \\
            {A^\dagger}^\top A^\top A\Lambda & = {A^\dagger}^\top A^\dagger AB & \Leftrightarrow \\
            A\Lambda & = {A^\dagger}^\top B & \Rightarrow \\
            A^\dagger A\Lambda &= A^\dagger {A^\dagger}^\top B & \Leftrightarrow \\
            (I_m \otimes (A^\dagger A))\vvec(\Lambda) & = \vvec(A^\dagger {A^\dagger}^\top B)
        \end{aligned}
    \end{equation*}

    As $(I_m \otimes (A^\dagger A))$ is an orthogonal projection,  $\Lambda = A^\dagger {A^\dagger}^\top B$ solves this equation.
Replacing the value of $B$ in both expressions found, we have  $\Lambda = \frac{1}{\lambda}A^\dagger{A^\dagger}^\top (H^{k+1/2}-V^k)$ and $\Psi = (\frac{1}{\lambda}(H^{k+1/2}-V^k) - \frac{1}{\lambda}A^\dagger A(H^{k+1/2}-V^k)){A^\dagger}^\top A^\dagger$. \qed
\end{proof}

Finally, evaluating the objective in \eqref{prob_134:dual_variable_ls} at $\hat\Lambda$ and $\hat\Psi$, we obtain the following expression for the dual residual for \ref{p1plspmn}\,,  at iteration $k$ of the DRS algorithm:  
\begin{equation*}
    \begin{aligned}
        r_d^{k} & :=  \textstyle\frac{1}{\lambda}(V^k-H^{k+1/2}) + \frac{1}{\lambda}A^\top AA^\dagger{A^\dagger}^\top (H^{k+1/2}-V^k) 
        \\
        & \qquad\qquad 
        +\textstyle (\frac{1}{\lambda}(H^{k+1/2}-V^k) - \frac{1}{\lambda}A^\dagger A(H^{k+1/2}-V^k)){A^\dagger}^\top A^\dagger AA^\top  \\
        & ~=  \textstyle\frac{1}{\lambda}(V^k-H^{k+1/2}) + \frac{1}{\lambda}A^\dagger A (H^{k+1/2}-V^k) \\
        & \qquad\qquad + \textstyle(\frac{1}{\lambda}(H^{k+1/2}-V^k) - \frac{1}{\lambda}A^\dagger A(H^{k+1/2}-V^k))AA^\dagger .
    \end{aligned}
\end{equation*}

\subsection{Numerical experiments}\label{sec:numexp134}

In this section, we compare our different  methods  for computing sparse generalized inverses that are simultaneously universal solvers for least-squares and minimum 2-norm. 
We use different test instances from those used in the previous sections, due to our motivating problem of having a sparse matrix $H \in \mathbb{R}^{n\times m}$ that simultaneously solves least squares applications, where in general $m\geq n$, and 2-norm minimization, where in general $m\leq n$. Thus, we generate new instances, but in the same way as the \cite{ponte2024goodfastrowsparseahsymmetric} instances used in \S\ref{sec:numexp13} and \S\ref{sec:numexp123} were generated. Specifically, we use the \texttt{MATLAB} function \texttt{sprand} to generate a random dense matrix $A$  of $m\times n$-dimension, with rank $r$ and with the $r$ positive singular values given by the input vector \texttt{rc}, but now we take $m=n$. Following \cite{FLPXjogo,ponte2024goodfastrowsparseahsymmetric}, we select \texttt{rc} as the decreasing vector $M\times(\alpha^{1},\alpha^{2},\ldots,\alpha^{r})$, where $M:=2$ and $\alpha:=(1/M)^{(2/(r+1))}$.

 We use  \texttt{Gurobi}, DRS$_{\mbox{\tiny res}}$, DRS$_{\mbox{\tiny fp}}$\,, and the ADMM 
 that we developed for \ref{p1p134} described in \S\ref{sec:admm134}. For the DRS algorithms, the projection $\Pi_{\mathcal{C}}(\cdot)$ and the primal and dual residuals are specified in \S\ref{sec:DRS134}. We set the parameter $\lambda:= 10^{-2}$ in the DRS algorithms and the parameter $\rho:=3$ in ADMM.

We performed a preliminary experiment solving the six formulations developed for \ref{p1p134}\,. From the results of this experiment, reported in the Appendix, we see that \ref{double} performs better. We therefore used it for the results presented for \texttt{Gurobi} in this section.

As for \ref{p131} and \ref{p1231a}\,, DRS$_{\mbox{\tiny fp}}$ was the best-performing method for \ref{p1p134}\,, converging faster than DRS$_{\mbox{\tiny res}}$ and ADMM for solutions with similar 1-norms. Thus, we report here again the results of an experiment in which 
we first solved all instances with DRS$_{\mbox{\tiny fp}}$ and saved $\|H_{\mbox{\tiny fp}}\|_1$\,, the 1-norm of the solutions obtained. Then, we ran DRS$_{\mbox{\tiny res}}$ and ADMM, stopping the algorithm when 
the obtained solution $H$ satisfied $(\|H\|_1 - \|H_{\mbox{\tiny fp}}\|_1)/ \|H_{\mbox{\tiny fp}}\|_1 \le 10^{-5}$. 

In Table \ref{tab:stats_p134}\,, we compare the results for \texttt{Gurobi}, DRS$_{\mbox{\tiny res}}$ and ADMM with the results for DRS$_{\mbox{\tiny fp}}$\,.  For DRS$_{\mbox{\tiny fp}}$\,, we show the 0-norm and 1-norm of the solution of each instance. For \texttt{Gurobi}, DRS$_{\mbox{\tiny res}}$ and ADMM, we consider the solution $H$ obtained by each algorithm and compare it with the solution $H_{\mbox{\tiny fp}}$ obtained by DRS$_{\mbox{\tiny fp}}$\,, showing the factor $(\|H\| - \|H_{\mbox{\tiny fp}}\|)/\|H_{\mbox{\tiny fp}}\|$ for the 0-norm. We also show the runtime (in seconds) and the 1-norm factor for \texttt{Gurobi}; for the other two algorithms we show 1-norm factors in the column `time', only if the time limit is reached, otherwise we present the runtime of algorithms (in seconds). 

\begin{table}[ht!]
    \centering  
    \caption{Comparison against DRS$_{\mbox{\tiny fp}}$ for \ref{p1p134} ($n=m$, $r=0.25m$)}
    \label{tab:stats_p134}
\begin{footnotesize}
        \begin{tabular}{r|rrr|rrr|rr|rr}      
         \multicolumn{1}{c}{} & \multicolumn{3}{c|}{\texttt{Gurobi}} & \multicolumn{3}{c|}{DRS$_{\mbox{\tiny fp}}$} & \multicolumn{2}{c|}{DRS$_{\mbox{\tiny res}}$} & \multicolumn{2}{c}{ADMM} \\
        \hline       
        \multicolumn{1}{c|}{$m$} & \multicolumn{1}{c}{$\jatop{\vphantom{\mbox{$X^{X^X}$}} \|H\|_0}{\mbox{factor}}$} & \multicolumn{1}{c}{$\jatop{\|H\|_1}{\mbox{factor}}$} & \multicolumn{1}{c|}{time} & \multicolumn{1}{c}{$\|H\|_0$} & \multicolumn{1}{c}{$\|H\|_1$} & \multicolumn{1}{c|}{time} & \multicolumn{1}{c}{$\jatop{\|H\|_0}{\mbox{factor}}$} & \multicolumn{1}{c|}{time} & \multicolumn{1}{c}{$\jatop{\|H\|_0}{\mbox{factor}}$} & \multicolumn{1}{c}
        {
        $\katop{\mbox{time}}
        {\left(\!{
        \jatop{\|H\|_1}{\mbox{factor}}}\!\right)
        }
        $
        }\\[4pt]
        \hline
           \vphantom{\mbox{$X^{X^X}$}} 
        100 & -1.06e-1 & -1.03e-4 & 280.41 & 4766 & 246.30 & 0.60 & 1.05e-2 & 0.80 & 7.41e-2 & 1.32 \\
        200 & \multicolumn{1}{c}{$*$} & \multicolumn{1}{c}{$*$} & \multicolumn{1}{c|}{$*$} & 19193 & 630.22 & 3.32 & 8.81e-3 & 9.82 & 3.72e-2 & 9.04 \\
        300 & \multicolumn{1}{c}{$*$} & \multicolumn{1}{c}{$*$} & \multicolumn{1}{c|}{$*$} & 41975 & 1029.83 & 8.06 & 1.00e-2 & 27.07 & 3.60e-2 & 19.98 \\
        400 & \multicolumn{1}{c}{$*$} & \multicolumn{1}{c}{$*$} & \multicolumn{1}{c|}{$*$} & 74832 & 1647.64 & 20.03 & 7.86e-3 & 47.43 & 3.52e-2 & 68.72 \\
        500 & \multicolumn{1}{c}{$*$} & \multicolumn{1}{c}{$*$} & \multicolumn{1}{c|}{$*$} & 113753 & 2027.75 & 26.89 & 4.96e-3 & 72.46 & 4.68e-2 & 149.27 \\
        \hline 
           \vphantom{\mbox{$X^{X^X}$}} 
        1000 & \multicolumn{1}{c}{$*$} & \multicolumn{1}{c}{$*$} & \multicolumn{1}{c|}{$*$} & 439108 & 5135.77 & 108.27 & 3.34e-3 & 262.21 & 3.74e-2 & 676.21 \\
        2000 & \multicolumn{1}{c}{$*$} & \multicolumn{1}{c}{$*$} & \multicolumn{1}{c|}{$*$} & 1703138 & 13259.53 & 338.36 & 1.40e-3 & 863.07 & 2.54e-2 & 3227.56 \\
        3000 & \multicolumn{1}{c}{$*$} & \multicolumn{1}{c}{$*$} & \multicolumn{1}{c|}{$*$} & 3790186 & 23487.12 & 684.41 & 8.97e-4 & 1621.44 & 2.24e-2 & (2.16e-5) \\
        4000 & \multicolumn{1}{c}{$*$} & \multicolumn{1}{c}{$*$} & \multicolumn{1}{c|}{$*$} & 6658442 & 34795.47 & 1231.34 & 5.78e-4 & 3000.75 & 2.93e-1 & (8.43e-4) \\
        5000 & \multicolumn{1}{c}{$*$} & \multicolumn{1}{c}{$*$} & \multicolumn{1}{c|}{$*$} & 10261421 & 44757.02 & 2110.16 & 2.18e-4 & 5185.74 & 7.28e-1 & (2.61e-3) \\
    \end{tabular}
    \end{footnotesize}
\end{table}

Our analysis for \ref{p1p134} is similar to the one we had for \ref{p1231a}\,. From the results in Table \ref{tab:stats_p134}, we see that \texttt{Gurobi} is not competitive in solving \ref{p1p134}\,, and that computing the residuals at each iteration in DRS$_{\mbox{\tiny res}}$ makes its convergence slower than DRS$_{\mbox{\tiny fp}}$\,. ADMM took longer to converge than the DRS algorithms, did not converge in the time limit for the three largest instances, and obtained worse 0-norm solutions than both DRS algorithms for all instances.

As we also observed for \ref{p131}\,, solutions with smaller 1-norms are sparser, showing  the effective use of the 1-norm minimization to induce sparsity. 

In Table \ref{tab:stats2_p134}, we compare the solution $H_{\mbox{\tiny fp}}$ obtained by DRS$_{\mbox{\tiny fp}}$ with the Moore-Penrose pseudoinverse of $A$, showing the factors $(\|H_{\mbox{\tiny fp}}\|- \|A^\dagger\|)/\|A^\dagger\|$ for the 0-norm and 1-norm. For \texttt{Gurobi} and  DRS$_{\mbox{\tiny fp}}$\,, we also  show the ratio between the 0-norm of the solution obtained and the upper bound $\beta_{134}:=mn + (m-r)(n-r)$ on the number of nonzero elements of extreme solutions of the standard linear-programming reformulation of \ref{p1p134}
(see Theorem \ref{prop:basicguarantee3}). 

\begin{table}[ht!]
    \centering  
    \caption{More statistics for DRS$_{\mbox{\tiny fp}}$ for  \ref{p1p134} ($n=m$, $r=0.25m$)}
    \label{tab:stats2_p134}
\begin{footnotesize}
        \begin{tabular}{r|r|rrrr}       
         \multicolumn{1}{c}{} & \multicolumn{1}{c|}{\texttt{Gurobi}} & \multicolumn{4}{c}{DRS$_{\mbox{\tiny fp}}$} \\ 
        \hline    
        \multicolumn{1}{c|}{$m$} & $\frac{\vphantom{\mbox{$X^{X^X}$}} \|H\|_0}{\beta_{134}}$ & $\frac{\|H\|_0}{\beta_{134}}$ & $\jatop{\|H\|_0}{\mbox{factor}}$ & $\jatop{\|H\|_1}{\mbox{factor}}$ & $\frac{\text{rank}(H)}{m}$ \\ [4pt]
        \hline
           \vphantom{\mbox{$X^{X^X}$}} 
        100 & 0.974 & 1.089 & -0.484 & -0.113 & 0.980 \\
        200 & \multicolumn{1}{c|}{$*$} & 1.097 & -0.489 & -0.122 & 0.990 \\
        300 & \multicolumn{1}{c|}{$*$} & 1.066 & -0.500 & -0.117 & 0.990 \\
        400 & \multicolumn{1}{c|}{$*$} & 1.069 & -0.496 & -0.117 & 0.988 \\
        500 & \multicolumn{1}{c|}{$*$} & 1.040 & -0.502 & -0.111 & 0.988 \\
                \hline    \vphantom{\mbox{$X^{X^X}$}} 
        1000 & \multicolumn{1}{c|}{$*$} & 1.004 & -0.510 & -0.112 & 0.994 \\
        2000 & \multicolumn{1}{c|}{$*$} & 0.973 & -0.517 & -0.113 & 0.996 \\
        3000 & \multicolumn{1}{c|}{$*$} & 0.963 & -0.514 & -0.111 & 0.997 \\
        4000 & \multicolumn{1}{c|}{$*$} & 0.951 & -0.516 & -0.112 & 0.996 \\
        5000 & \multicolumn{1}{c|}{$*$} & 0.938 & -0.508 & -0.108 & 0.998 \\
    \end{tabular}
    \end{footnotesize}
\end{table}

We can observe from the results in Table \ref{tab:stats2_p134} that by minimizing the 1-norm, we obtain ah-ha symmetric generalized inverses with the 0-norm reduced by about 50\% and the 1-norm reduced by almost 90\%, compared to $A^\dagger$. We see that the number of nonzero elements in the \texttt{Gurobi} solution is about 97\% of the upper bound for extreme solutions derived in Theorem \ref{prop:basicguarantee3}, indicating that the upper bound may be sharp; see additional comments in \S\ref{sec:outlook}. 
The number of nonzero elements in the DRS$_{\mbox{\tiny fp}}$ solutions is also not far from this bound. As we observed for \ref{p131}\,, we have larger ratios for the smallest instances, but we note that the ratios might decrease if we let DRS$_{\mbox{\tiny fp}}$ run longer, giving up on its faster convergence. Finally, we see that to obtain the sparse solutions, we give up on low rank; all solutions are close to full-rank matrices.

\begin{figure}[!ht]
    \centering
    \includegraphics[width=0.496\linewidth]{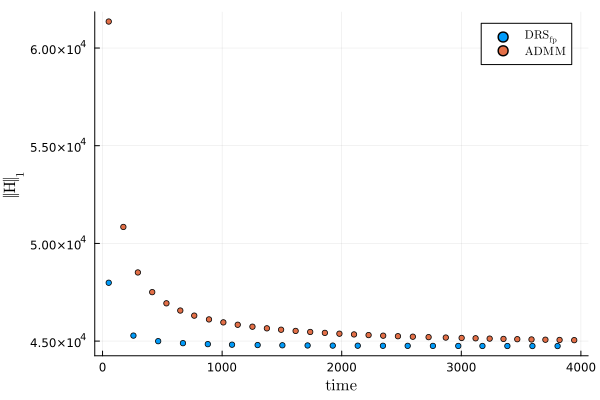}
    \includegraphics[width=0.496\linewidth]{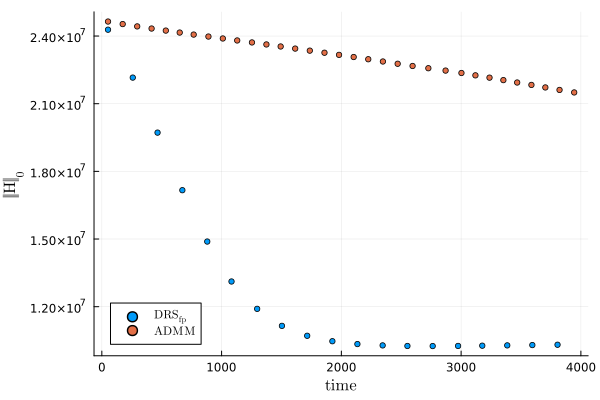}
    \caption{Iterations for \ref{p1p134} (stride of 50)}
    \label{fig:p134}
\end{figure}

In Figure \ref{fig:p134}, we plot the pairs $(\|H\|,\mbox{time (in seconds)})$ every 50 iterations for DRS$_{\mbox{\tiny fp}}$ and ADMM, for both 1-norm and 0-norm, when applied to the largest instance ($m=5000$). We let both algorithms run for $7200$ seconds and show results up to $4000$ seconds in the plots. After this point, the 1-norms of the solutions do not change significantly. As we saw for \ref{p1231a}, we observe the faster convergence of DRS$_{\mbox{\tiny fp}}$ compared to ADMM. We also observe that after $7200$ seconds, the 1-norm of the DRS$_{\mbox{\tiny fp}}$ solution decreased by a factor of only $10^{-4}$ compared to the solution reported in Table~\ref{tab:stats_p134}. The solution with the minimum 0-norm was obtained after $2700$ seconds, but it also decreased by a factor of only $10^{-4}$ compared to the solution reported in Table~\ref{tab:stats_p134}. Once more, we note that although there is a small increase in the 0-norm after $2700$ seconds, we see that the 1-norm was a good substitute for inducing sparsity.

\section{Outlook}\label{sec:outlook}

Overall, our best-performing methods were DRS algorithms. 
 As considered in \cite{Fu_2020}, sometimes the performance of DRS algorithms can be improved using Anderson Acceleration. The challenge, of course, is to balance potentially fewer iterations against the per-iteration cost. We leave this for a future investigation. 
 
The sparsity bounds of Theorem \ref{prop:basicguarantee2} (for \ref{p1231a}) and Theorem \ref{prop:basicguarantee3}
(for  \ref{p1p134}) may not be sharp. 
Our bounds are based on exactly calculating the
rank of the constraint matrix for the associated standard linear-programming reformulations.
So, if better bounds exist, they must be based on demonstrating a positive lower bound on the 
number of basic variables at value 0 (i.e., guaranteed degeneracy) over all basic feasible solutions. Based on similar computational results on $\|H\|_0/\beta_{13}$ (Table \ref{tab:stats2_p13}) and
$\|H\|_0/\beta_{134}$  (Table \ref{tab:stats2_p134}), we see some evidence that the sparsity bound of Theorem \ref{prop:basicguarantee3}
(for  \ref{p1p134}) may be sharp (as it is for 
\ref{p131}\,, via Theorem \ref{prop:basicguarantee}),
and we believe that it is worth trying to establish this.
In contrast, we see significantly lower values for $\|H\|_0/\beta_{123}$ (Table \ref{tab:stats2_p123}), which we 
see as some evidence that the sparsity bound of Theorem \ref{prop:basicguarantee2} (for \ref{p1231a}) is likely not to be sharp, and so we believe that an improved sparsity is likely to exist. 

\cite{XFLPsiam} established that both sparsity-maximization problems
$\min\{\| H \|_0 $ $ ~:~ \mbox{{\ref{P1}+\ref{P2}}}\}$
and
$\min\{\| H \|_0 ~:~ \allowbreak \mbox{\ref{P1}+\ref{P2}+\ref{P3}}\}$
are NP-hard, and they mentioned that 
the complexity of 
$\min\{\| H \|_0 ~:~ \allowbreak \mbox{\ref{P1}+\ref{P3}+\ref{P4}}\}$
is unknown. It remains unknown, and we hope that our work on \ref{p1p134}
and its relatives
sparks interest in settling this open problem. 

\appendix

\normalsize

\section{Appendix}\label{appA}

In the following three tables, we present the results of our preliminary numerical experiments, comparing the runtimes for \texttt{Gurobi} applied to all the formulations presented in this paper for each problem addressed, namely \ref{p131}\,, \ref{p1231a}\,, and \ref{p1p134}\,. The instances used in these experiments are smaller than those used in our final numerical experiments,  reported in \S\S \ref{sec:numexp13}, \ref{sec:numexp123}, and \ref{sec:numexp134}\,. These instances were used to evaluate \texttt{Gurobi}'s performance, which could only solve smaller instances within our time limit. We generated them in the same way as the instances used in our final experiments. We used the \texttt{MATLAB} function \texttt{sprand} to generate random dense matrices $A$  of $m\times n$-dimension, with rank $r$ and with the $r$ positive singular values given by the input vector  \texttt{rc}, where  \texttt{rc} was selected as the decreasing vector $M\times(\alpha^{1},\alpha^{2},\ldots,\alpha^{r})$, where $M:=2$ and $\alpha:=(1/M)^{(2/(r+1))}$.
The time limit used in these preliminary experiments was 1200 seconds for each instance. We generated five instances of each size for each problem and present average runtimes for them in the tables. 

In Table \ref{tab:p1p3vspls}, we present results comparing the different reformulations of  problem \ref{p131}\,. We see that  \ref{p131} and \ref{calP13} are solved in a time two orders of magnitude greater than the others. Although \ref{calP13} has fewer variables than the others, it does not lead to a reduction in the time required to solve the problems. Finally, we note that while we can  solve instances up to $m = 480$ with \ref{ppls1}\,, we can only solve instances up to (at most) $m = 260$ with the other formulations.

\begin{table}[ht!]
    \centering  
    \caption{Average runtimes for        \texttt{Gurobi} for  \ref{p131}\,, \ref{p1proj13}\,, \ref{ppls1}\,, \ref{calP13} ($n=0.5m$, $r=0.25m$)}
    \label{tab:p1p3vspls}
\begin{footnotesize}
        \begin{tabular}{c|r|r|r|r}       
         \multicolumn{1}{c|}{} & \multicolumn{4}{c}{Time (sec)} \\      
        \multicolumn{1}{c|}{$m$} & \multicolumn{1}{c|}{\ref{p131}} & \multicolumn{1}{c|}{\ref{p1proj13}}&\multicolumn{1}{c|}{\ref{ppls1}}& \multicolumn{1}{c}{\ref{calP13}}  \\[4pt]
        \hline      
        60 & 16.65 & 0.54 & 0.54 & 8.66 \\
        80 & 74.27 & 0.45 & 0.37 & 28.51 \\
        100 & 266.64 & 1.08 & 0.78 & 75.03 \\
        120 & 712.28 & 2.05 & 1.44 & 219.74 \\
        140 & \multicolumn{1}{c|}{$*$} & 4.27 & 2.49 & 552.42 \\
        160 & \multicolumn{1}{c|}{$*$} & 7.13 & 4.08 & 1063.08 \\
        180 & \multicolumn{1}{c|}{$*$} & 12.39 & 5.87 & \multicolumn{1}{c}{$*$} \\
        200 & \multicolumn{1}{c|}{$*$} & 17.03 & 9.57 & \multicolumn{1}{c}{$*$} \\
        220 & \multicolumn{1}{c|}{$*$} & 23.47 & 13.77 & \multicolumn{1}{c}{$*$} \\
        240 & \multicolumn{1}{c|}{$*$} & 34.53 & 18.84 & \multicolumn{1}{c}{$*$} \\
        260 & \multicolumn{1}{c|}{$*$} & 46.55 & 26.33 & \multicolumn{1}{c}{$*$} \\
        280 & \multicolumn{1}{c|}{$*$} & \multicolumn{1}{c|}{$*$} & 35.18 & \multicolumn{1}{c}{$*$} \\
        300 & \multicolumn{1}{c|}{$*$} & \multicolumn{1}{c|}{$*$} & 42.66 & \multicolumn{1}{c}{$*$} \\
        320 & \multicolumn{1}{c|}{$*$} & \multicolumn{1}{c|}{$*$} & 59.66 & \multicolumn{1}{c}{$*$} \\
        340 & \multicolumn{1}{c|}{$*$} & \multicolumn{1}{c|}{$*$} & 72.68 & \multicolumn{1}{c}{$*$} \\
        360 & \multicolumn{1}{c|}{$*$} & \multicolumn{1}{c|}{$*$} & 86.84 & \multicolumn{1}{c}{$*$} \\
        380 & \multicolumn{1}{c|}{$*$} & \multicolumn{1}{c|}{$*$} & 114.23 & \multicolumn{1}{c}{$*$} \\
        400 & \multicolumn{1}{c|}{$*$} & \multicolumn{1}{c|}{$*$} & 143.03 & \multicolumn{1}{c}{$*$} \\
        420 & \multicolumn{1}{c|}{$*$} & \multicolumn{1}{c|}{$*$} & 170.12 & \multicolumn{1}{c}{$*$} \\
        440 & \multicolumn{1}{c|}{$*$} & \multicolumn{1}{c|}{$*$} & 212.11 & \multicolumn{1}{c}{$*$} \\
        460 & \multicolumn{1}{c|}{$*$} & \multicolumn{1}{c|}{$*$} & 244.67 & \multicolumn{1}{c}{$*$} \\
        480 & \multicolumn{1}{c|}{$*$} & \multicolumn{1}{c|}{$*$} & 340.35 & \multicolumn{1}{c}{$*$} \\
        500 & \multicolumn{1}{c|}{$*$}& \multicolumn{1}{c|}{$*$}& \multicolumn{1}{c|}{$*$}& \multicolumn{1}{c}{$*$}\\[-4pt]
        \multicolumn{1}{c|}{\vdots}& \multicolumn{1}{c|}{\vdots}& \multicolumn{1}{c|}{\vdots}& \multicolumn{1}{c|}{\vdots}& \multicolumn{1}{c}{\vdots}\\
        600 &  \multicolumn{1}{c|}{$*$}& \multicolumn{1}{c|}{$*$}& \multicolumn{1}{c|}{$*$}& \multicolumn{1}{c}{$*$}
    \end{tabular}
    \end{footnotesize}
\end{table}

In Table \ref{tab:p1p2p3vsplsr}, we present results comparing the different reformulations of  problem \ref{p1231a}\,.
 The formulation that performed best  was \ref{calP123}\,, taking 2 orders of magnitude less time than the second best performing formulation \ref{p1plsp2lin}\,. While we can solve instances with $m$ up to $280$ with \ref{calP123}\,, we can only solve instances with $m$ up to (at most) $120$ with all the other formulations. For this problem, we  see the best performance for the formulation with fewer variables, there are only  $(n-r)r$ variables in \ref{calP123}\,, compared to $mn$ variables in the other formulations.

\begin{table}[ht!]
    \centering  
    \caption{Average runtimes for        \texttt{Gurobi} for  \ref{p1231lin}\,, \ref{p1proj13p2lin}\,, \ref{p1plsp2lin}\,, \ref{pplsr1}\,, \ref{calP123} ($n=0.5m$, $r=0.25m$)}
    \label{tab:p1p2p3vsplsr}
\begin{footnotesize}
        \begin{tabular}{c|r|r|r|r|r}        
         \multicolumn{1}{c|}{} & \multicolumn{5}{c}{Time (sec)} \\   
        \multicolumn{1}{c|}{$m$} & \multicolumn{1}{c|}{\ref{p1231lin}} & \multicolumn{1}{c|}{\ref{p1proj13p2lin}}&\multicolumn{1}{c|}{\ref{p1plsp2lin}}& \multicolumn{1}{c|}{\ref{pplsr1}}& \multicolumn{1}{c}{\ref{calP123}}  \\[4pt]
        \hline      
        60 & 41.63 & 5.89 & 4.28 & 16.43 & 1.10 \\
        80 & 217.41 & 18.84 & 22.59 & 80.95 & 2.81 \\
        100 & \multicolumn{1}{c|}{$*$} & \multicolumn{1}{c|}{$*$} & 161.73 & 332.17 & 5.76 \\
        120 & \multicolumn{1}{c|}{$*$} & \multicolumn{1}{c|}{$*$} & 243.45 & \multicolumn{1}{c|}{$*$} & 11.39 \\
        140 & \multicolumn{1}{c|}{$*$} & \multicolumn{1}{c|}{$*$} & \multicolumn{1}{c|}{$*$} & \multicolumn{1}{c|}{$*$} & 21.13 \\
        160 & \multicolumn{1}{c|}{$*$} & \multicolumn{1}{c|}{$*$} & \multicolumn{1}{c|}{$*$} & \multicolumn{1}{c|}{$*$} & 37.98 \\
        180 & \multicolumn{1}{c|}{$*$} & \multicolumn{1}{c|}{$*$} & \multicolumn{1}{c|}{$*$} & \multicolumn{1}{c|}{$*$} & 82.50 \\
        200 & \multicolumn{1}{c|}{$*$} & \multicolumn{1}{c|}{$*$} & \multicolumn{1}{c|}{$*$} & \multicolumn{1}{c|}{$*$} & 142.42 \\
        220 & \multicolumn{1}{c|}{$*$} & \multicolumn{1}{c|}{$*$} & \multicolumn{1}{c|}{$*$} & \multicolumn{1}{c|}{$*$} & 331.32 \\
        240 & \multicolumn{1}{c|}{$*$} & \multicolumn{1}{c|}{$*$} & \multicolumn{1}{c|}{$*$} & \multicolumn{1}{c|}{$*$} & 510.44 \\
        260 & \multicolumn{1}{c|}{$*$} & \multicolumn{1}{c|}{$*$} & \multicolumn{1}{c|}{$*$} & \multicolumn{1}{c|}{$*$} & 3382.58 \\
        280 & \multicolumn{1}{c|}{$*$} & \multicolumn{1}{c|}{$*$} & \multicolumn{1}{c|}{$*$} & \multicolumn{1}{c|}{$*$} & 5048.71 \\
        300 & \multicolumn{1}{c|}{$*$} & \multicolumn{1}{c|}{$*$} & \multicolumn{1}{c|}{$*$} & \multicolumn{1}{c|}{$*$} & \multicolumn{1}{c}{$*$} \\[-4pt]
        \multicolumn{1}{c|}{\vdots} &\multicolumn{1}{c|}{\vdots}& \multicolumn{1}{c|}{\vdots}& \multicolumn{1}{c|}{\vdots}& \multicolumn{1}{c|}{\vdots}& \multicolumn{1}{c}{\vdots}\\
        600 & \multicolumn{1}{c|}{$*$}& \multicolumn{1}{c|}{$*$}& \multicolumn{1}{c|}{$*$}& \multicolumn{1}{c|}{$*$}& \multicolumn{1}{c}{$*$}
    \end{tabular}
    \end{footnotesize}
\end{table}

\medskip

In Table \ref{tab:p1p3p4vspmx}, we present results comparing the different reformulations of problem \ref{p1p134}\,.
We see that only a few instances can be solved; none of the formulations solves instances with $m>100$.  \ref{p1pmn3} and \ref{double} perform best, taking half the time required by \ref{calP134}\,, which is not expected given that \ref{calP134} has fewer variables than the other reformulations. We note that the time for \ref{p1p134} is an order of magnitude greater than for the other formulations, and although  \ref{p1pmx}  has fewer constraints than \ref{p1p134}\,, \ref{p1pmn3}\,, \ref{p1plspmn}\,, and \ref{double}\,, we only solve instances with $m$ up to $80$ with it. 
\begin{table}[ht!]
    \centering  
    \caption{Average runtimes for \texttt{Gurobi} for  \ref{p1p134}\,, \ref{p1pmn3}\,, \ref{p1plspmn}\,, \ref{double}\,, \ref{p1pmx}\,, \ref{calP134} ($n=m$, $r=0.25m$)}
    \label{tab:p1p3p4vspmx}
\begin{footnotesize}
        \begin{tabular}{c|r|r|r|r|r|r}       
         \multicolumn{1}{c|}{} & \multicolumn{6}{c}{Time (sec)} \\      
        \multicolumn{1}{c|}{$m$} & \multicolumn{1}{c|}{\ref{p1p134}} & \multicolumn{1}{c|}{\ref{p1pmn3}}&\multicolumn{1}{c|}{\ref{p1plspmn}}& \multicolumn{1}{c|}{\ref{double}}& \multicolumn{1}{c|}{\ref{p1pmx}} & \multicolumn{1}{c}{\ref{calP134}}  \\[4pt]
        \hline      
        60 & 244.51 & 8.59 & 8.02 & 6.61 & 12.11 & 30.38 \\
        80 & \multicolumn{1}{c|}{$*$} & 113.97 & 70.94 & 62.79 & 135.51 & 300.91 \\
        100 & \multicolumn{1}{c|}{$*$} & 263.09 & 432.78 & 276.47 & \multicolumn{1}{c|}{$*$} & 647.92 \\
        120 & \multicolumn{1}{c|}{$*$} & \multicolumn{1}{c|}{$*$} & \multicolumn{1}{c|}{$*$} & \multicolumn{1}{c|}{$*$} & \multicolumn{1}{c|}{$*$} & \multicolumn{1}{c}{$*$} \\[-4pt]
        \multicolumn{1}{c|}{\vdots}& \multicolumn{1}{c|}{\vdots}& \multicolumn{1}{c|}{\vdots}& \multicolumn{1}{c|}{\vdots}& \multicolumn{1}{c|}{\vdots}& \multicolumn{1}{c|}{\vdots}& \multicolumn{1}{c}{\vdots}\\
        600 &  \multicolumn{1}{c|}{$*$}& \multicolumn{1}{c|}{$*$}& \multicolumn{1}{c|}{$*$}& \multicolumn{1}{c|}{$*$}& \multicolumn{1}{c|}{$*$}& \multicolumn{1}{c}{$*$}\\
    \end{tabular}
    \end{footnotesize}
\end{table}

\section*{Declarations}~

\noindent \underline{Ethics approval and consent to participate}

\medskip

- not applicable - 

\medskip

\noindent \underline{Consent for publication}

\medskip

- not applicable -

\medskip
 
\noindent \underline{Funding}

\medskip

M. Fampa was supported in part by CNPq grant 307167/2022-4.
J. Lee was supported in part by AFOSR grant FA9550-22-1-0172.

\medskip

\noindent  \underline{Availability of data and materials}

\medskip

Computational experiments were conducted with randomly-generated data. We gave details on how to generate 
comparable data. We have made the test data available at \url{https://zenodo.org/records/20708172}.

\medskip

\noindent \underline{Acknowledgements}

\medskip

We thank Gabriel Ponte for his support and assistance.

\bibliographystyle{alpha}
\bibliography{MFL_Ter}

\end{document}